	\newcommand{\resub}[1]{{#1}}
	\newcommand{\ressub}[1]{{#1}}
	\newcommand{\resub}[1]{{#1}}
	\newcommand{\ressub}[1]{{\color{blue}#1}}
\let\OLDthebibliography\thebibliography
\renewcommand\thebibliography[1]{
	\OLDthebibliography{#1}
	\setlength{\parskip}{0pt}
	\setlength{\itemsep}{0pt plus 0.3ex}
}
\newcommand{\sval}[2]{
	\DTLifdbexists{#1}{}{\DTLloaddb[noheader, keys={thekey,thevalue}]{#1}{#1.txt} }
	\DTLfetch{#1}{thekey}{#2}{thevalue}
}
\newtheoremstyle{pplain}
{3pt}
{3pt}
{\normalfont}
{0pt}
{\itshape}
{:}
{2pt}
{}%
\theoremstyle{pplain}
\newtheorem{theorem}{Theorem}
\theoremstyle{pplain}
\newtheorem{remark}{Remark}
\theoremstyle{pplain}
\newtheorem{lemma}{Lemma}
\theoremstyle{pplain}
\newtheorem{lemmaa}{Lemma}[section]
\theoremstyle{pplain}
\newtheorem{definition}{Definition}
\theoremstyle{pplain}
\theoremstyle{pplain}
\theoremstyle{pplain}
\newtheorem{corollary}{Corollary}
\theoremstyle{pplain}
\theoremstyle{pplain}
\newenvironment{pbm}[1]
{\innercustomthm}
{\endinnercustomthm}
\newcommand{\R}{\mathbb{R}}
\newcommand{\Ss}{\mathbb{S}}
\newcommand{\Sp}{\mathbb{S}_+}
\newcommand{\Spp}{\mathbb{S}_{++}}
\newcommand{\G}{\mathcal{G}}
\newcommand{\E}{\mathcal{E}}
\newcommand{\V}{\mathcal{V}}
\newcommand{\tp}{{^\intercal}}
\newcommand{\Z}{\mathcal Z}
\DeclareMathOperator{\tr}{tr}
\DeclareMathOperator{\rank}{rank}
\DeclareMathOperator{\svd}{svd}
\DeclareMathOperator{\diag}{diag}
\DeclareMathOperator{\vecop}{vec}
\DeclareMathOperator{\find}{find}
\newcommand{\tnn}[2]{\eta_{#2}(#1)}
\newcommand{\kfn}[2]{\left\Vert #1 \right\Vert_{\left\lceil #2 \right\rceil}}
\DeclareMathOperator{\minimize}{minimize}
\DeclareMathOperator{\argmin}{argmin}
\DeclareMathOperator{\subjecto}{subject\,to}
\DeclareMathOperator{\sgn}{sgn}
\newcommand{\A}{\mathcal{A}}
\newcommand{\B}{\mathcal{B}}
\newcommand{\inner}[2]{\left \langle #1, #2 \right \rangle}
\newcommand{\fix}{\check}
\newcommand{\sta}{\bar}
\newcommand{\tar}{\tilde}
\newcommand{\perturbation}{intervention}
\newcommand{\erdos}{Erd\H{o}s-R\'enyi}
\newcommand{\ieeebus}{IEEE \mbox{14-bus} system}
\DeclareMathOperator{\Pone}{\mathcal{P}_{1}}
\DeclareMathOperator{\Ponedc}{\mathcal{P}_{1-\mathrm{DN}}}
\DeclareMathOperator{\Ponesub}{\mathcal{P}_{1-\mathrm{SUB}}}
\DeclareMathOperator{\Ponestd}{\mathcal{P}_{1-\mathrm{STD}}}
\DeclareMathOperator{\Ptwo}{\mathcal{P}_{2}}
\DeclareMathOperator{\Ptwodc}{\mathcal{P}_{2-\mathrm{DN}}}
\DeclareMathOperator{\Ptwosub}{\mathcal{P}_{2-\mathrm{SUB}}}
\newcommand{\ommit}[1]{{}}
\newlength{\bibitemsep}\setlength{\bibitemsep}{.15\baselineskip plus .05\baselineskip minus .05\baselineskip}
\newlength{\bibparskip}\setlength{\bibparskip}{0pt}
\let\oldthebibliography\thebibliography
\renewcommand\thebibliography[1]{%
	\oldthebibliography{#1}%
	\setlength{\parskip}{\bibitemsep}%
	\setlength{\itemsep}{\bibparskip}%
}
\begin{document}

\title{
	Network Design for Controllability Metrics
}

\author{
	Cassiano~O.~Becker,~S\'{e}rgio~Pequito,~George~J.~Pappas~and~Victor~M.~Preciado
\thanks{
	 This work was supported in part by the National Science Foundation, grant CAREER-ECCS-1651433 and in part by CAPES, Coordena\c{c}\~{a}o de Aperfei\c{c}oamento de Pessoal de N\'{i}vel Superior - Brasil.
}
\thanks{
	Cassiano O. Becker (cassiano@seas.upenn.edu), George J. Pappas (pappasg@seas.upenn.edu) and Victor M. Preciado (preciado@seas.upenn.edu) are with the Department of Electrical and Systems Engineering, University of Pennsylvania - 200 South 33rd Street, Philadelphia, PA 19104.
	S\'{e}rgio Pequito (goncas@rpi.edu) is with the Department of Industrial and Systems Engineering, Rensselaer Polytechnic Institute - CII 5007, 110 8th Street,  Troy, NY 12180-3590.
}
}


\markboth{}%
{
}

\maketitle

\newcommand{\cout}[1]{\textcolor{red}{\sout{#1}}}
\newcommand{\cin}[1]{\textcolor{magenta}{#1}}

\begin{abstract}
In this paper, we consider the problem of tuning the edge weights of a networked system described by linear time-invariant dynamics. We assume that the topology of the underlying network is fixed and that the set of feasible edge weights is a given polytope. In this setting, we first consider a feasibility problem consisting of tuning the edge weights such that certain controllability properties are satisfied. The particular controllability properties under consideration are (\emph{i}) a lower bound on the smallest eigenvalue of the controllability Gramian, and (\emph{ii}) an upper bound on the trace of the Gramian inverse. In both cases, the edge-tuning problem can be stated as a feasibility problem involving bilinear matrix equalities, which we approach using a sequence of convex relaxations. Furthermore, we also address a design problem consisting of finding edge weights able to satisfy the aforementioned controllability constraints while seeking to minimize a cost function of the edge weights, which we assume to be convex. Finally, we verify our results with numerical simulations over many random network realizations, as well as with an IEEE 14-bus power system topology.
\end{abstract}

\begin{IEEEkeywords}
Networked dynamics, network design, controllability Gramian, bilinear matrix equality, convex optimization.
\end{IEEEkeywords}

%
\IEEEpeerreviewmaketitle

\section{Introduction}

\IEEEPARstart{M}{any} technological, biological, chemical, and social systems can be modeled as large ensembles of dynamical units connected via an intricate pattern of interactions~\cite{FB-LNS}.
From an engineering perspective, we are interested in efficiently steering the dynamics of these complex systems via external actuation.
In this direction, control theory provides us with the notion of \emph{controllability} to decide whether a given system can be steered towards an arbitrary state~\cite{hespanha2009linear}. Furthermore, the so-called \emph{controllability Gramian} of a system, which implicitly depends on the system's dynamics and the configuration of its actuators, can be used to quantify the energy required to steer the system, assuming the system is controllable~\cite{hespanha2009linear}.
Leveraging these notions, several papers have recently focused on the problem of optimally allocating actuators throughout the network under several performance metrics~\cite{
	clark2012,
	6580798,
	summers2016actuator,
	summers2014submodularity,
	pequito2017robust,
	olshevsky2014minimal,
	PequitoJournal,
	tzoumas2016minimal,
	pequito2016minimum,
	enyioha2014controllability
}.

In some scenarios, instead of designing the location of external actuators, one may consider the alternative problem of modifying the network's dynamics given a fixed configuration of actuators. 
For example, in power systems, one can tune the electrical parameters of the transmission lines using, for example,
flexible AC transmission system (FACTS) devices~\cite{ilic2000dynamics,zhang2012flexible}.
Also, in multi-agents networks, the interactions between agents can usually be modified to achieve a particular objective~\cite{xiao2004fast}.
For instance, in leader-follower multi-agent networks, one may consider the scenario where both the communication topology and the location of the external actuators are fixed. Then, one can seek a set of edge weights (e.g., the agents' update rules) such that the average and/or worst-case energy required to drive the state of the network satisfies certain bounds. In this regard, the present work first considers the feasibility problem of finding the edge weights of a linear networked system such that certain bounds on controllability metrics are satisfied. Secondly, we address the design problem of finding edge weights able to satisfy the aforementioned bounds while seeking to minimize a cost function of the edge weights, which we assume to be convex. In particular, we consider a $1$-norm sparsity-promoting cost function aiming to penalize the number of edges whose weights are modified in the resulting design.

\subsubsection{Related Work} 

In recent years, the problem of designing systems to satisfy certain controllability metrics has mostly focused on finding optimal actuator configurations, i.e., the location of those nodes to be externally actuated by control inputs~\cite
{
	clark2012,6580798,
	summers2016actuator,
	summers2014submodularity,
	pequito2017robust,
	olshevsky2014minimal,
	PequitoJournal,
	tzoumas2016minimal,
	pequito2016minimum,
	enyioha2014controllability
}.
In addition, a considerable amount of research has been dedicated to understanding how the network topology impacts control performance~\cite{
	pasqualetti2014controllability,
	bianchin2015role,	
	aguilar2016almost,
	aguilar2015graph,
	parlangeli2012reachability,
	notarstefano2013controllability,
	chapman2014controllability,
	tanner2004controllability,
	pequito2017robust,
	roy2019controllability,
	zhao2017discrete
}.
In particular, \cite{zhao2017discrete} establishes necessary and sufficient graph-theoretical conditions for a discrete-time networked system to exhibit a diagonal controllability Gramian. 
In~\cite{
	zhao2017gramian
}, the authors characterize the minimum input energy required to transfer a discrete-time dynamical system with bilinear dynamics from the origin to a desired state.
The work in~\cite{
	bianchin2016observability
} proposes the notion of observability radius, which measures how much the parameters of a dynamical system can be perturbed before the system becomes unobservable.
In a similar direction, the work in~\cite{siami2018growing} investigates the effect of adding network edges to improve spectral performance metrics for the case of consensus dynamics over networks.
\resub{
	More generally, the works in~\cite{shafi2011graph,torres2018dominant,preciado2016distributed,sun2018weighted,hassan2017topology} investigate design problems that seek to optimize network dynamical properties such as the dominant eigenvalue of the system matrix, with applications to virus spread and wireless control networks appearing in~\cite{preciado2013optimal,preciado2014optimal,pajic2011wireless,wan2008designing}.%
}

\resub{	
	The present paper extends previous work by the authors in~\cite{becker2017network} through several contributions. Specifically, in this paper we: 
	\emph{(i)}~\ressub{address the discrete-time case, in which} the discrete Lyapunov equation introduces higher-degree products in its decision variables and requires new transformation steps for its treatment;
	\emph{(ii)}~provide an analysis of the conditions under which stability of the designed system is assured;
	\emph{(iii)}~consider cost functions over edge weights, which can be used to promote solutions with higher sparsity in edge modifications;
	\emph{(iv)}~propose a convex relaxation approach, which enables a more detailed analysis of convergence;
	\emph{(v)}~consider average controllability as an additional controllability metric; and 
	\emph{(vi)}~present comprehensive computational experiments to illustrate the above aspects.
}

\subsubsection{Structure and contributions of the paper}

The rest of the paper is organized as follows. In Section~II, we formalize both the network feasibility and the network design problems, in which we are tasked with tuning the weights of the edges in a given network in order to satisfy certain controllability metrics. Specifically, we consider two metrics: (\emph{i}) the worst-case control energy, which is related to the smallest eigenvalue of the Gramian, and (\emph{ii}) the average energy required to drive the system, which is related to the trace of the Gramian inverse. In Section~III, we provide a detailed description of the strategy followed to solve both problems. In particular, we cast both the feasibility and the design problems into nonlinear optimization programs with quadratic bilinear terms, which are, in general, computationally hard to solve.
We approach these optimization problems by lifting the space of variables and adding a rank constraint on a matrix whose entries depend affinely on the decision variables. We then propose a sequence of convex problems to relax this rank constraint using a truncated nuclear norm.
In Section~IV, we illustrate the validity of our results via computational experiments on random graphs, as well as 
\resub{a \mbox{$1$-norm}}
 sparsity-promoting design problem considering the \ieeebus.
We conclude \resub{and} enumerate some possibilities for future work in Section~V.

\subsubsection*{Notation}

We denote by $[X]_{i,j}$ the entry at the $i$-th row and $j$-th column of the matrix $X \in \R^{m \times n}$. The transpose of X is written as $X^\tp$. 
The $n \times n$ identity matrix is denoted by $I_n$. 
The operator $\diag(a_1,\ldots, a_n)$ returns a diagonal matrix having $a_1,\ldots,a_n$ as entries in its diagonal.
The inner product between two matrices $X,Y \in \R ^{m \times n}$ is given by 
$\left < X, Y \right > = \tr\{ X^\tp Y\}$, 
\resub{
	where $\tr\{X^\tp Y\}=\sum_{i=1}^n [X^\tp Y]_{i,i}$ denotes the trace operator.
}
The $1$-norm of a matrix $X \in \R^{m \times n}$ is defined as the $\ell_1$-norm of its vectorization, i.e., $\|X\|_1 = \| \!\vecop(X)\|_1$.
\resub{
	Likewise, the $0$-norm of a matrix is defined as the $\ell_0$-quasi-norm of its vectorization, i.e., the number of nonzero entries. The infinity norm of~$X$ is defined as $\| X\|_\infty= \max_{i,j} [X]_{i,j}$.
}
\resub{
	The nuclear norm of $X$ is defined in terms of its singular values~$\sigma_i(X)$, $i=1,\ldots, \min\{m,n\}$, as 
	$\|X\|_\ast = \sum_{i=1}^{\min\{m,n\}} \sigma_i(X)$.
}
\resub{
	The operator norm of $X$ is denoted by $\|X\|$ and computed as $\|X\|=\sigma_1(X)$, the largest singular value of $X$.
}
\resub{We denote by $\Ss^n$ the set of symmetric matrices of dimension $n$}.
Likewise, $\Sp^n$ (resp., $\Spp^n$) is the set of symmetric positive semidefinite (resp.,  definite) matrices. Correspondingly, the semidefinite partial ordering is denoted $X \succeq Y$ (resp., $X \succ Y$) when $X-Y \succeq 0$ (resp., $X-Y \succ 0$).
\resub{
	A set~$\mathcal S \subset \R^m$ is a spectrahedron \cite[Def. 2.6]{blekherman2012semidefinite} if it can be represented in the form $\mathcal S =\{ (x_1, \ldots, x_m) \in \R^m : Q_0 + \sum _{i=1}^m Q_i x_i \succeq 0\}$, for $Q_0, \ldots, Q_m \in \Ss^n$.
}
\resub{A proper algebraic variety~$\mathcal V \subset \R^n$ is the set of common zeros of a finite number of nonzero polynomials in $n$ variables.}
\resub{
}

\section{Problem Formulation}
\label{sec:pr_state}

\newcommand{\WrT}{{W_{r,T}}}
\newcommand{\Wr}{W_{r}^{\infty}}
\newcommand{\WrTinv}{(\WrT)^{-1}}
\newcommand{\Wrinv}{(\Wr)^{-1}}

Consider a networked system following a discrete-time linear time-invariant dynamics, described by
\begin{align}
x(k+1) = A(\G)x(k) + Bu(k),
\label{eq:disc_dyn}
\end{align}
where $x(k)\in \R^n$ denotes the vector of states and $u(k)\in \R^m$ is the vector of inputs at instant $k$. The sparsity pattern of the state matrix~$A(\G)\in \R^{n \times n}$ is constrained by a directed \emph{interdependency graph}~$\G = \left (\V, \E  \right )$ defined by a set of nodes~$\V=\{1,\ldots,n\}$ and a set of edges~$\E \subseteq \V \times \V$, such that $[A(\G)]_{i,j} \in \R$ if the edge~$(j,i)\in \E$, and $[A(\G)]_{i,j} = 0$ if $(j,i)\notin \E$. Also, the input matrix~$B \in \R^{n \times m}$ is such that $[B]_{i,l}\neq 0$ if the external input signal $[u(k)]_l$ directly influences~$[x(k+1)]_i$, and $[B]_{i,l}= 0$ otherwise.

Next, consider the problem of driving the state of the network from a given initial state $x_0 \equiv x(0)$ to a desired target state $x_T \equiv x(T)$ within a time horizon $T>0$, by designing a sequence of inputs~$u(k)$ for $k \in \{0,1,\ldots, T-1\}$. If any $x_T\in\R^n$ is attainable from $x_0=0_n$ within a time horizon~
\resub{$T$},
then the system~\eqref{eq:disc_dyn} is said to be \emph{reachable}, which we refer to $(A(\G),B)$ being reachable. Furthermore, it is known that the minimum input control energy to steer the system to a desired final state $x_T$ from $x_0=0_n$ is given by~\cite{hespanha2009linear}
\begin{align}
\label{eq:min_energy}
J(T,x_T) \coloneqq x_T^\tp \WrTinv x_T,
\end{align}
where $\WrT$ is called the \emph{finite-horizon reachability Gramian}, defined as
$
\WrT \coloneqq \sum_{k=0}^{T-1} A(\G)^kBB^\tp (A(\G)^\tp)^k .
$
The \emph{infinite-horizon reachability Gramian} is then obtained as the limit $\Wr \coloneqq \lim_{T \rightarrow \infty} \WrT$. This Gramian is positive definite, and can be computed as the (unique) solution to the \mbox{discrete-time} Lyapunov equation
\begin{align}
\label{eq:disc_lyap}
A(\G)\Wr A(\G)^\tp - \Wr + BB^\tp=0.
\end{align}
when the system is reachable 
\resub{%
	and $A(\G)$ is stable}~\cite{hespanha2009linear}. 
\subsection{Reachability Metrics}

\newcommand{\spec}{\mathcal W_\theta}

We focus on two metrics related to the reachability Gramian to quantify the minimum input energy to drive the system~\cite{muller1972analysis,pasqualetti2014controllability,zhao2017gramian}.

\paragraph{Worst-case minimum input energy}

Because $W_r^\infty$ is (symmetric) positive definite when the system is reachable, its eigenvalues $\lambda_1\le \ldots \le \lambda_n$ are positive real numbers, with corresponding eigenvectors $v_i$ for $i=1,\ldots,n$. It turns out that the final state $x_T$ satisfying $\|x_T\|_2=1$ requiring the largest minimum input energy to be reached from $x_0=0_n$ is given by the (normalized) eigenvector $v_1$. The energy required to drive the state from the origin towards $v_1$ within an infinite horizon is equal to $\lambda_1^{-1}$, which we call the \emph{worst-case minimum input energy}.
Therefore, if we require the worst-case minimum input energy to be less than or equal to a desired value~$\tar \lambda^{-1} \resub{\,>0}$, then the reachability Gramian must satisfy the following semidefinite constraint:
\begin{align}
\label{eq:req_min_lam}
\Wr - \tar \lambda I_n \succeq 0.
\end{align}

\paragraph{Average minimum input energy}

The expected energy required to steer the system from the origin towards a random final state uniformly distributed over the unit sphere is equal to
$\frac{1}{n} \tr \{\Wrinv\}$~\cite{muller1972analysis},
which we call the \emph{average minimum input energy}.
\resub{In a manner similar}
 to the worst-case minimum input energy metric, we can constrain the average minimum input energy to be upper-bounded by a target value~$\tar \tau \resub{\;< \infty}$ via the condition
\begin{align}
\label{eq:req_trace_inv}
n\tar \tau - \tr \{\Wrinv\} \geq 0,
\end{align}
which is also representable by a semidefinite constraint over $\Wr$ (see Lemma~\ref{lem:tr_inv} in the Appendix).

In what follows, we will refer to the aforementioned reachability constraints on $\Wr$ by the set membership condition
\begin{align}
\label{eq:set_spec}
\Wr \in \spec,
\end{align}
where $\spec$ is a convex set (more precisely, a spectrahedron) defined by  constraints~\eqref{eq:req_min_lam} and/or \eqref{eq:req_trace_inv}, and indexed by the parameters in~$\theta = (\tar \lambda, \tar \tau)$.

\subsection{Network Design for Reachability}

\newcommand{\poly}{\mathcal D}

As previously mentioned, we consider the problem of tuning the edge weights of a given network in order to satisfy certain minimum control energy requirements (either in worst-case or in average).
In particular, we assume that we are able to add a matrix $\Delta(\G) \in \R^{n \times n}$ to the state matrix $A(\G)$, such that $\Delta(\G)$ presents the same sparsity pattern as the interdependency graph, i.e., $ [\Delta(\G)]_{i,j}=0$ for $(j,i)\notin \mathcal E$. After this addition, the dynamics of the network becomes
\begin{align}
\label{eq:disc_dyn_pert}
x(k+1)=[A(\G)+\Delta(\G)]x(t)+Bu(t).
\end{align}
Furthermore, we may require that $\Delta(\G)$ be contained in a given polytope~$\poly$ encoding acceptable limits for its entries. For example, we can impose
upper and lower bounds of the form~$[\Delta(\G)]_{i,j}\in [\iota_{i,j},\upsilon_{i,j}]$ for $(j,i)\in \mathcal E$ in the design problem. 
Subsequently, we consider the model described by~\eqref{eq:disc_dyn_pert} and address the following two problems\footnote{For compactness of notation, we will denote $A(\G)$, $\Wr$, and $\Delta(\G)$ simply by $A$, $W$, and $\Delta$, respectively, in the rest of the paper.}.

\subsubsection{Feasible Design for Reachability Metrics}

We seek an addition~$\Delta \in \poly$ such that the resulting reachability Gramian~$W \in \Spp^n$ satisfies~$W \in \spec$. This can be posed as the following feasibility problem:

\newcommand{\schurconst}{| \lambda_i (A+\Delta )| < 1,  \;\,  i=1,\ldots,n}

\begin{pbm}{$\Pone$}[Feasible Design for Reachability Metrics]\label{pbm:pone} 
	Given the interdependency graph~$\G$, with $(A,B)$ reachable, we would like to 
	\begin{align}
	\find \qquad   &\Delta \in \R^{n\times n},\; W \in \Spp^n \notag \\ 
	\subjecto \qquad &W \in \spec, \label{eq:spec}\\
	&\Delta \in \poly, \label{eq:poly}\\
	(A+\Delta)&W (A+\Delta)^\tp
	- W + BB^\tp=0, \label{eq:lyap_delta}\\
	&\resub{\schurconst{}}, \label{eq:schur_const}
	\end{align}
\end{pbm}
\resub{
where constraint~\eqref{eq:lyap_delta} arises from the discrete-time Lyapunov equation associated with~\eqref{eq:disc_dyn_pert}, and 
constraint~\eqref{eq:schur_const} enforces the stability of the designed system. }


\begin{remark}
	\resub{
		Partial design, allowing only a subset of the edge weights to be modified, can be performed by imposing additional constraints $[\Delta]_{ i, j} = 0$ for the edges~$(j,i)$ that cannot be affected by the design procedure.%
	 }
\end{remark}
As we will show in the next section, this feasibility problem can be addressed using a sequence of convex relaxations. This problem also lays the foundation to our second problem, described next.

\subsubsection{Design for Reachability with Structural Penalties}

In this formulation, we introduce an optimization objective that penalizes entries of $\Delta$ with large magnitudes, while meeting the reachability requirements on $W$ and structural constraints on $\Delta$. In particular, aiming at penalizing the number of edges modified, we consider the $1$-norm penalty over the entries of $\Delta$ as our cost function.
\resub{
	The $1$-norm behaves as a convex envelope to the $0$-norm (i.e., the number of non-zero entries in the matrix), and has found wide use in the signal processing and optimization literature~\cite{donoho2006compressed, recht2010guaranteed, hastie2015statistical}. In control systems problems, it has been successfully applied to promote sparsity in control architectures, for instance, in~\cite{dorfler2014sparsity,lin2013design}.
}

\begin{pbm}{$\Ptwo$}[Design for Reachability with Structural Penalties]\label{pbm:ptwo}
	 Given an interdependency graph~$\G$ and a reachable system~$(A,B)$, find a structural addition~$\Delta$ seeking to
	\begin{align*}
	\underset{\substack{\Delta \in \R^{n\times n}\\ W \in \Spp^n}}{\minimize}\qquad   & \| \Delta \|_1\\
	\subjecto \qquad & 
	\eqref{eq:spec},
	\eqref{eq:poly},
	\eqref{eq:lyap_delta}\resub{\text{ and }
	\eqref{eq:schur_const}.} 
	\end{align*}
\end{pbm}

As will be described in Section~\ref{sec:penalty}, this problem can be addressed by a sequence of convex relaxations involving an additive penalty term over the 1-norm of $\Delta$, whose limiting value is obtained by a 
procedure called \emph{regularization path}~\cite{blomberg2014approximate}.

\begin{remark}
More generally, in \ref{pbm:ptwo}, we could consider a cost function having individual weights over the entries of $\Delta$. For simplicity, in this paper we consider all entries to have unit weight.
\end{remark}


\section{Design for Reachability Algorithm}


\newcommand{\varA}{\mathcal V}
\newcommand{\varW}{\mathcal V_0}
\newcommand{\varC}{\mathcal V_1}
\global\long\def\vc{\mathrm{vec}}

\newcommand{\almostsurely}{\emph{almost surely}}

\resub{
	In this section, we propose a computational procedure to address~$\Pone$ and $\Ptwo$. We begin by providing preliminary analyses of the Lyapunov equation~\eqref{eq:lyap_delta} and of the stability constraint~\eqref{eq:schur_const}. We show that the Lyapunov equation constraint can be transformed into a rank constraint, and that its solution will imply the stability of $A + \Delta$ \almostsurely{}.
	Then, we solve $\Pone$ by handling the rank constraint through a sequence of convex problems with guaranteed convergence.  
	Subsequently, we address $\Ptwo$ by computing a regularization path over a weight parameter that controls the sparsity of the generated solutions. 
}

\subsection{Stability from a positive solution to the Lyapunov Equation}

\resub{
	In this section, we show that constraint~\eqref{eq:schur_const} is satisfied \almostsurely{} by all $\Delta \in \poly$ that satisfy the Lyapunov constraint in~\eqref{eq:lyap_delta}. Following methodologies similar to~\cite{davison1973properties,shields1976structural,dion2003generic,menara2018structural}, we formalize this result in the next theorem.

	\begin{theorem}[Stability of the designed system]	
		\label{thm:stab_lyap_generic}
		For a solution~$(W,\Delta)$ to \eqref{eq:lyap_delta} with $W\succ 0$, if the original system~$(A,B)$ is reachable, then the system~$A+\Delta$ will be stable for any $\Delta \in \poly \setminus \varA$, where $\varA$ is a set with Lebesgue measure zero.
	\end{theorem}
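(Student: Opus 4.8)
The plan is to exploit the fact that the discrete-time Lyapunov equation $(A+\Delta)W(A+\Delta)^\tp - W + BB^\tp = 0$ with $W \succ 0$ is exactly the Stein equation whose positive-definite solvability is classically tied to the stability (Schur) of $A+\Delta$, \emph{provided} the pair $(A+\Delta, B)$ is reachable. The textbook Lyapunov stability theorem for discrete time states: if $(\bar A, B)$ is reachable and there exists $W \succ 0$ solving $\bar A W \bar A^\tp - W + BB^\tp = 0$, then $\bar A$ is Schur stable. So the entire theorem reduces to showing that reachability of $(A,B)$ forces reachability of $(A+\Delta, B)$ for all $\Delta$ outside a Lebesgue-null set $\varA \subseteq \poly$. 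That genericity statement is where the real work lies, and it is the step I expect to be the main obstacle.

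First I would set up the algebraic-variety argument. Reachability of a pair $(\bar A, B)$ is equivalent to the controllability matrix $[\,B \mid \bar A B \mid \cdots \mid \bar A^{n-1}B\,]$ having full row rank $n$, which holds iff at least one $n\times n$ minor is nonzero. Each such minor is a polynomial in the entries of $\bar A$, hence — writing $\bar A = A + \Delta$ and fixing $A$ — a polynomial $p_k(\Delta)$ in the free entries of $\Delta$ (those indexed by edges of $\G$). Define $\varA := \{\Delta : p_k(\Delta) = 0 \text{ for all } k\}$, the common zero set of these polynomials; this is a proper algebraic variety in the space $\R^{|\E|}$ of admissible $\Delta$'s, \emph{as long as at least one $p_k$ is not identically zero}. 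Since a proper algebraic variety has Lebesgue measure zero, restricting to $\poly \setminus \varA$ gives the claimed null exceptional set, and for every $\Delta$ in the complement $(A+\Delta, B)$ is reachable, so the Lyapunov stability theorem applies and yields $A+\Delta$ Schur.

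The crux, therefore, is to verify that the $p_k$ are \emph{not} all identically zero as polynomials in $\Delta$ — i.e., that reachability is achievable for \emph{some} admissible $\Delta$ with the prescribed sparsity pattern. Here I would use the hypothesis that the original $(A,B)$ is reachable: this exhibits a specific point, namely $\Delta = 0$, at which $p_k(0) \neq 0$ for some $k$ (the controllability matrix of $(A,B)$ has full rank). Hence that particular $p_k$ is a nonzero polynomial, so $\varA$ is a proper variety. One subtlety to address: $\Delta = 0$ must lie in (the affine hull / the support-pattern subspace of) $\poly$ for this to be a legitimate evaluation — but $\Delta$ is by construction constrained only to the edge-sparsity subspace, and $0$ always respects that pattern, so $p_k$ is genuinely a nonzero polynomial on the relevant coordinate space; intersecting with the polytope $\poly$ only shrinks the domain and cannot make a measure-zero set larger than measure zero.

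Finally I would assemble the pieces: given $(W,\Delta)$ solving \eqref{eq:lyap_delta} with $W \succ 0$ and $\Delta \in \poly \setminus \varA$, reachability of $(A+\Delta, B)$ holds by the genericity argument, the discrete-time Lyapunov/Stein stability theorem then gives $|\lambda_i(A+\Delta)| < 1$ for all $i$, which is precisely \eqref{eq:schur_const}. I would cite the standard discrete Lyapunov theorem (e.g. from \cite{hespanha2009linear}) for the stability implication and the standard fact that proper algebraic varieties are Lebesgue-null, and reference the analogous genericity reasoning in \cite{davison1973properties,shields1976structural,dion2003generic,menara2018structural}. The only genuinely delicate point beyond bookkeeping is making the "not identically zero" claim airtight, and the presence of the reachable base pair $(A,B)$ resolves it cleanly.
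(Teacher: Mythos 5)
Your proposal is correct and follows essentially the same route as the paper: generic preservation of reachability under structured perturbations of $A$, combined with the classical discrete-time Lyapunov (Stein) stability theorem. Two minor differences are worth noting. First, where the paper simply cites Proposition~2 of \cite{shields1976structural} for the genericity of reachability of $(A+\Delta,B)$, you prove it directly by observing that some $n\times n$ minor of the controllability matrix, viewed as a polynomial in the free entries of $\Delta$, is nonzero at the admissible point $\Delta=0$ because $(A,B)$ is reachable --- a clean, self-contained witness argument that correctly handles the intersection with the polytope. Second, the paper's exceptional set $\varA$ additionally includes the variety on which the discrete Lyapunov equation fails to have a unique solution (Lemma~\ref{thm:generality_uniqueness}); omitting this is harmless for the stability conclusion, since the stability lemma only requires the \emph{existence} of a positive-definite solution together with reachability of $(A+\Delta,B)$, and the theorem hypothesis already supplies such a solution.
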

	\begin{proof}
		Applying Lemma~\ref{thm:generality_uniqueness} from the Appendix for the matrix~$A+\Delta$, we have that a solution~$W$ to \eqref{eq:lyap_delta} exists and is unique for all $\Delta \in \poly \setminus \varW$, where $\varW$ is a proper algebraic variety with Lebesgue measure zero.
		Further, since the pair~$(A,B)$ is reachable and $\Delta$ is restricted to the structure of $A$ by $\poly$, from \cite[Proposition 2]{shields1976structural}, the pair~$(A+\Delta,B)$ is also reachable for $\Delta \in \poly \setminus \varC$, where $\varC$ is a proper algebraic variety with Lebesgue measure zero.
		Therefore, since a finite union of proper algebraic varieties is a proper algebraic variety, 
		we have that the system~$A+\Delta$ will be reachable and will have a unique solution~$W \succ 0$ to~\eqref{eq:lyap_delta} for any $\Delta \in \poly \setminus \varA$, where $\varA \coloneqq \varW \cup \varC$ is a proper algebraic variety with zero Lebesgue measure. Thus, applying Lemma~\ref{thm:lyap_stability_schur}, we have that $A+\Delta$  	
		will be stable for all $\Delta \in \poly \setminus \varA$.
	\end{proof}	
	Therefore, seeking a tractable computational strategy for~$\Pone$, we consider constraint~\eqref{eq:schur_const} to be implicitly satisfied by all points satisfying
	\eqref{eq:spec} and
	\eqref{eq:lyap_delta} which do not lie in~$\varA$.
	Consequently, if the solution to $\Pone$, as determined by specific constraint sets~$\spec$ and $\poly$, is such that $\Delta \in \varA$, then, we declare $\Pone$ to be infeasible for the parameters defining those sets.  The same considerations apply to $\Ptwo$.
}

\subsection{Discrete-time Lyapunov Equation as a Rank Condition}

Notice that, for both problems~\ref{pbm:pone} and \ref{pbm:ptwo}, the discrete-time Lyapunov constraint~\eqref{eq:lyap_delta} induces double and triple products between the decision matrices $\Delta$ and $W$. To address this issue, we first show that \eqref{eq:lyap_delta} can be alternatively satisfied by the solution of a lifted bilinear matrix equation (BME).
Then, we approximate the solution of the resulting BME-constrained problem using a sequence of convex problems.
We begin by lifting the constraint in \eqref{eq:lyap_delta} into a BME using the following lemma.

\begin{lemma}
	\label{lem:disc_lyap}
	The discrete-time Lyapunov equation~\eqref{eq:lyap_delta}
	is satisfied by $W$ and $\Delta$ when the following BME is satisfied by the variables~$W \in \Spp^n$, $H \in \R^{n \times n}$, and $\Delta \in \R^{n \times n}$:
	\begin{align}
	\label{eq:bme_matrix}
	M(W,H)N(\Delta)=Q,
	\end{align}
	where \small
	\begin{align*}
	M(W,H) \!\coloneqq\! 
	\begin{bmatrix}
	H^{\tp} & -W\\
	\!-W & H
	\end{bmatrix}\!,
	N(\Delta) \!\coloneqq\! 
	\begin{bmatrix}
	\!(A +\Delta)^{\tp}\\
	I_{n}
	\end{bmatrix}\!,
	Q \!\coloneqq\!
	\begin{bmatrix}
	-BB^{\tp}\\
	0
	\end{bmatrix}\!\!.
	\end{align*}
\end{lemma}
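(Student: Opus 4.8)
The plan is to verify directly that satisfying the block bilinear matrix equation $M(W,H)N(\Delta)=Q$ forces $W$ and $\Delta$ to satisfy the discrete-time Lyapunov equation~\eqref{eq:lyap_delta}, by simply carrying out the block multiplication and reading off the two block-row equations it produces. First I would compute the product
\[
M(W,H)N(\Delta)
=\begin{bmatrix} H^{\tp} & -W\\ -W & H\end{bmatrix}
\begin{bmatrix} (A+\Delta)^{\tp}\\ I_n\end{bmatrix}
=\begin{bmatrix} H^{\tp}(A+\Delta)^{\tp}-W\\ -W(A+\Delta)^{\tp}+H\end{bmatrix},
\]
and equate this with $Q=\begin{bmatrix}-BB^{\tp}\\ 0\end{bmatrix}$. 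The bottom block row gives $H = W(A+\Delta)^{\tp}$, which identifies the auxiliary variable $H$ as a stand-in for the product $W(A+\Delta)^{\tp}$ (this is exactly the ``lifting'' that removes the triple product). The top block row gives $H^{\tp}(A+\Delta)^{\tp} - W = -BB^{\tp}$.

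Next I would substitute the expression for $H$ obtained from the bottom row into the top row. Since $H = W(A+\Delta)^{\tp}$ and $W$ is symmetric, we have $H^{\tp} = (A+\Delta)W$, so the top-row equation becomes
\[
(A+\Delta)W(A+\Delta)^{\tp} - W = -BB^{\tp},
\]
which is precisely~\eqref{eq:lyap_delta} after moving $BB^{\tp}$ to the left-hand side. This establishes that any $(W,H,\Delta)$ with $W\in\Spp^n$ solving~\eqref{eq:bme_matrix} yields a pair $(W,\Delta)$ solving the discrete-time Lyapunov equation, which is the claimed direction.

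I do not anticipate a genuine obstacle here; the lemma is essentially a bookkeeping identity, and the only things to be careful about are (i) using symmetry of $W$ correctly when transposing $H$, and (ii) stating the direction of the implication as in the lemma (BME solvability $\Rightarrow$ Lyapunov satisfied), rather than claiming an equivalence — although it is worth remarking in passing that the converse also holds by setting $H \coloneqq W(A+\Delta)^{\tp}$, so the lifted formulation loses nothing. If one wanted the full equivalence, the reverse direction is immediate: given $(W,\Delta)$ satisfying~\eqref{eq:lyap_delta}, define $H \coloneqq W(A+\Delta)^{\tp}$ and check both block rows hold by construction. The mild ``effort'' in the surrounding development is not in this lemma but in what comes after — turning the still-bilinear equation~\eqref{eq:bme_matrix} into a rank condition on an affinely parameterized matrix — so the proof of Lemma~\ref{lem:disc_lyap} itself should be just the short block computation above.
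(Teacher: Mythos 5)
Your proposal is correct and follows essentially the same route as the paper: expand the block product into its two block-row equations, read off $H=W(A+\Delta)^{\tp}$ from the second, and substitute into the first (using symmetry of $W$) to recover the Lyapunov equation. The only cosmetic difference is that the paper records the first block row in its transposed form $(A+\Delta)H-W=-BB^{\tp}$, which is equivalent since $W$ and $BB^{\tp}$ are symmetric.
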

\begin{proof}
	The equation in \eqref{eq:bme_matrix} is equivalent to the following system of matrix equations:
	\begin{subnumcases}{}
	(A + \Delta)H-W& \!\!\!\!\!\!\!\!\!\!$=-BB^{\tp},$\label{eq:bilinear_line1}\\
	H-W(A + \Delta)^{\tp}& \!\!\!\!\!\!\!\!\!\!\!\! $=0.$\label{eq:bilinear_line2}
	\end{subnumcases}
	From \eqref{eq:bilinear_line2}, we have that $H=W(A + \Delta)^\tp$. Substituting this $H$ in  \eqref{eq:bilinear_line1}, we obtain the Lyapunov equation in~\eqref{eq:lyap_delta}, as desired.
\end{proof}

We now rewrite the BME in \eqref{eq:bme_matrix} as an equivalent rank constraint over a matrix with a specific block structure, as stated in the next theorem.

\begin{theorem}[Rank condition for Lyapunov equation]
	\label{thm:rank_lyap}
	Let $\Z(W,H,\Delta) \in \R^{4n \times 3n}$ be the structured matrix  defined as
	\begin{align} \notag
	\Z(W,H,\Delta) &\coloneqq 
	\begin{bmatrix}
	Z_{11} & Z_{12} \\
	Z_{21} & Z_{22}
	\end{bmatrix}\coloneqq
	\begin{bmatrix}
	I_{2n} & N(\Delta) \\
	M(W,H) & Q
	\end{bmatrix}\\ 
	&= \begin{bmatrix}
	I_{n} & 0 & (A + \Delta)^\tp\\
	0 & I_n & I_n \\
	H^\tp & -W & -BB^\tp \\
	-W & H & 0
	\end{bmatrix}\!\!.
	\label{eq:Zmatrix}
	\end{align}
	If $\rank[\Z(W^\star,H^\star,\Delta^\star)] = 2n$, then~$W^\star$ and $\Delta^\star$ satisfy the discrete-time Lyapunov equation in~\eqref{eq:lyap_delta}.
\end{theorem}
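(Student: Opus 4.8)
The plan is to use the Schur-complement / block-elimination structure of $\Z$ to show that the rank being exactly $2n$ forces the bottom-right block to be annihilated in the right way, which is precisely the BME \eqref{eq:bme_matrix}; then invoke Lemma~\ref{lem:disc_lyap} to conclude. The matrix $\Z$ has a leading $2n \times 2n$ identity block $Z_{11} = I_{2n}$, so its rank is automatically at least $2n$. A standard fact about block matrices with invertible top-left block is that
\begin{align*}
\rank\!\begin{bmatrix} Z_{11} & Z_{12} \\ Z_{21} & Z_{22}\end{bmatrix} = \rank(Z_{11}) + \rank\!\left(Z_{22} - Z_{21}Z_{11}^{-1}Z_{12}\right),
\end{align*}
which here reads $\rank[\Z] = 2n + \rank\!\left(Q - M(W,H)\,N(\Delta)\right)$. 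Hence $\rank[\Z] = 2n$ if and only if the Schur complement $Q - M(W,H)N(\Delta)$ is the zero matrix, i.e. exactly the BME $M(W,H)N(\Delta) = Q$ of \eqref{eq:bme_matrix}.

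Concretely, I would first recall (or prove in a line) the block-rank identity above: multiplying $\Z$ on the left by the invertible matrix $\begin{bmatrix} I_{2n} & 0 \\ -Z_{21} & I_{2n}\end{bmatrix}$ (this is where I use $Z_{11}=I_{2n}$ rather than a general invertible block, which keeps things elementary) transforms $\Z$ into $\begin{bmatrix} I_{2n} & N(\Delta) \\ 0 & Q - M(W,H)N(\Delta)\end{bmatrix}$; a further right-multiplication by $\begin{bmatrix} I_{2n} & -N(\Delta) \\ 0 & I_n\end{bmatrix}$ block-diagonalizes it to $\operatorname{diag}(I_{2n},\, Q - M(W,H)N(\Delta))$. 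Since left/right multiplication by invertible matrices preserves rank, $\rank[\Z(W^\star,H^\star,\Delta^\star)] = 2n + \rank\!\left(Q - M(W^\star,H^\star)N(\Delta^\star)\right)$. Setting this equal to $2n$ and using that rank is nonnegative forces $Q - M(W^\star,H^\star)N(\Delta^\star) = 0$, so $(W^\star, H^\star, \Delta^\star)$ satisfies the BME \eqref{eq:bme_matrix}.

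Finally I would apply Lemma~\ref{lem:disc_lyap}: since $W^\star$, $H^\star$, $\Delta^\star$ satisfy \eqref{eq:bme_matrix}, the pair $W^\star$, $\Delta^\star$ satisfies the discrete-time Lyapunov equation \eqref{eq:lyap_delta}, which is the claim. (One should note $W^\star \in \Spp^n$ as required for the hypotheses of that lemma, which is part of the ambient feasibility set.) There is no real obstacle here — the only thing to be careful about is getting the conformal block dimensions right ($Z_{11}$ is $2n\times 2n$, $Z_{12} = N(\Delta)$ is $2n \times n$, $Z_{21} = M(W,H)$ is $2n \times 2n$, $Z_{22} = Q$ is $2n \times n$, so the Schur complement $Q - M(W,H)N(\Delta)$ is $2n \times n$, consistent with the $4n \times 3n$ total) and stating the block-rank identity in a form that does not secretly require $Z_{11}$ to be square-and-same-size as $Z_{22}$. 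The elementary row/column operations argument above sidesteps any such worry.
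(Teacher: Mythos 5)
Your proof is correct and follows essentially the same route as the paper: both reduce the rank condition to the vanishing of the Schur complement $Q - M(W^\star,H^\star)N(\Delta^\star)$ and then invoke Lemma~\ref{lem:disc_lyap}. The only difference is cosmetic — you derive the rank-additivity identity by explicit block row/column elimination (exploiting $Z_{11}=I_{2n}$), whereas the paper simply cites Guttman's rank additivity formula.
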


\begin{proof}
	Consider the Schur complement of $Z_{11}$ in $Z \equiv \Z(W^\star,H^\star,\Delta^\star)$, given by $Z/Z_{11} = Z_{22} - Z_{21}Z_{11}^{-1} Z_{12}$. From \eqref{eq:Zmatrix}, we have that $Z/Z_{11}  = Q - M^\star N^\star$, where $M^\star \equiv M(W^\star,H^\star)$ and $N^\star\equiv N(\Delta^\star)$. 
	According to Guttman's rank additivity formula~\cite{zhang2006schur}, the following holds:
	\begin{align}
	\label{eq:guttman}
	\rank[Z] =  \rank[Z_{11}] + \rank[Z/Z_{11}].
	\end{align}  
	Since $\rank(Z_{11}) = 2n$, we have that $\rank(Z)=2n$ if and only if $\rank[Z/Z_{11}] = 0 = \rank[Q -M^\star N^\star]$, or equivalently, $Q = M^\star N^\star$. Thus, by Lemma~\ref{lem:disc_lyap}, it follows that $W^\star$ and $\Delta^\star$ satisfy the discrete-time Lyapunov equation in~\eqref{eq:lyap_delta}.	
\end{proof}

Equipped with the above result, we can replace the constraint in~\eqref{eq:lyap_delta} by the rank constraint $\rank[\Z(W,H,\Delta)]=2n$ in both problems~\ref{pbm:pone} and \ref{pbm:ptwo}. Importantly, notice that the blocks of $\Z(W,H,\Delta)$ depend affinely on the problem decision matrices $W$ and $\Delta$.
Next, we show that this reformulation can be approached using a sequence of convex programs.

\subsection{Design for Reachability via Sequential Optimization}

As introduced in Theorem~\ref{thm:rank_lyap}, a solution~$(W^\star,\Delta^\star)$ to~\eqref{eq:disc_dyn_pert} will be obtained when the rank of $\Z(W^\star,H^\star,\Delta^\star)$ equals $2n$. To achieve this condition, one would in principle seek to minimize the rank of $\Z(W,H,\Delta)$, which is a non-convex and discontinuous function. Alternatively, problems having the rank as an objective function have been approached by considering the nuclear norm (i.e., the sum of a matrix's  singular values) as a relaxation\cite{recht2010guaranteed}. Further, from Theorem~\ref{thm:rank_lyap}, we have a-priori information on the specific optimal value (equal to~$2n$) for the rank of $Z$. In this case, alternative functions related to the nuclear norm have been shown to produce better approximations to the rank function~\cite{hu2012fast}. In particular, the \emph{truncated nuclear norm} function, defined next, uses the rank as an index restricting the number of (ordered) singular values considered in its computation.

\begin{definition}[Truncated nuclear norm function]
	\label{def:tnn}
	The truncated nuclear norm function (TNN) of a matrix~$X \in \R^{m\times n}$ with respect to an integer parameter~$r$ satisfying $r < \min\{m,n\}$ is defined as
	\begin{align*}\small
	\tnn{X}{r} \coloneqq \sum _{i=r+1}^{\min\{m,n\}} \sigma_i(X), \vspace{-3mm}  
	\end{align*}
	where $\sigma_i$ takes values over the set of singular values of $X$ sorted in descending order.
	\label{def:trunc_nuc_norm}
\end{definition}

Using this definition, we can re-state the conditions in Theorem~\ref{thm:rank_lyap} in terms of the TNN, as described below.

\begin{corollary}[TNN sufficient condition for Lyapunov equation]
	\label{cor:suff_lyap}
	If the tuple~$(W^\star \in \Spp,H^\star \in \R^{n \times n},\Delta^\star \in \R^{n \times n})$ satisfies $\tnn{\Z(W^\star,H^\star,\Delta^\star)}{2n} = 0$, then $(W^\star,\Delta^\star)$ satisfies the discrete-time Lyapunov equation~\eqref{eq:lyap_delta}.
	\label{thm:tnn_lyap}
\end{corollary}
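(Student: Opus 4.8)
The plan is to derive this corollary as an immediate consequence of Theorem~\ref{thm:rank_lyap} by connecting the truncated nuclear norm to the rank. The key observation is that $\tnn{X}{r}$ is a sum of singular values, hence a sum of nonnegative quantities, so $\tnn{X}{r} = 0$ forces each of those singular values to vanish: $\sigma_i(X) = 0$ for all $i > r$. Since the singular values are sorted in descending order, this is precisely the statement that $X$ has at most $r$ nonzero singular values, i.e.\ $\rank(X) \le r$.

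Concretely, I would first recall that $\Z(W^\star,H^\star,\Delta^\star) \in \R^{4n \times 3n}$, so $\min\{4n,3n\} = 3n$, and Definition~\ref{def:tnn} applies with $r = 2n$ (which indeed satisfies $2n < 3n$). Setting $\tnn{\Z(W^\star,H^\star,\Delta^\star)}{2n} = \sum_{i=2n+1}^{3n} \sigma_i(\Z(W^\star,H^\star,\Delta^\star)) = 0$ together with $\sigma_i \ge 0$ gives $\sigma_{2n+1} = \cdots = \sigma_{3n} = 0$, hence $\rank[\Z(W^\star,H^\star,\Delta^\star)] \le 2n$. On the other hand, the block structure of $\Z$ in \eqref{eq:Zmatrix} contains the identity block $Z_{11} = I_{2n}$, and by Guttman's rank additivity formula (as used in the proof of Theorem~\ref{thm:rank_lyap}) we have $\rank[\Z] = \rank[Z_{11}] + \rank[Z/Z_{11}] \ge 2n$. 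Combining the two bounds yields $\rank[\Z(W^\star,H^\star,\Delta^\star)] = 2n$ exactly.

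Having established the rank equality, I would then invoke Theorem~\ref{thm:rank_lyap} directly: it states that $\rank[\Z(W^\star,H^\star,\Delta^\star)] = 2n$ implies $(W^\star,\Delta^\star)$ satisfies the discrete-time Lyapunov equation~\eqref{eq:lyap_delta}. This completes the argument. There is essentially no obstacle here; the only point requiring a sentence of care is the lower bound $\rank[\Z] \ge 2n$, which is needed so that ``$\le 2n$'' becomes ``$= 2n$'' — but this is already implicit in the structure of $\Z$ and was used in proving Theorem~\ref{thm:rank_lyap}, so it can be cited rather than re-derived. The result is genuinely a corollary: it simply translates the rank-$2n$ hypothesis of the theorem into the vanishing of the truncated nuclear norm, which is the form amenable to convex relaxation in the subsequent algorithmic development.
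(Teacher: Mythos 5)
Your proposal is correct and follows essentially the same route as the paper: vanishing of the truncated nuclear norm forces $\sigma_{2n+1}=\cdots=\sigma_{3n}=0$, hence $\rank[\Z]=2n$, and Theorem~\ref{thm:rank_lyap} finishes the argument. Your explicit justification of the lower bound $\rank[\Z]\ge 2n$ via the $Z_{11}=I_{2n}$ block and Guttman's formula is a small point of care the paper's one-line proof leaves implicit, but it is the same argument.
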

\begin{proof}
	The value $\tnn{\Z(W^\star,H^\star,\Delta^\star)}{2n} = 0$ implies
	$\sigma_i = 0$ for $i=2n+1,\ldots,
	\resub{3n}$.
	 This, in turn, implies that
	$\rank[\Z(W^\star,H^\star,\Delta^\star] = 2n$ in \eqref{eq:Zmatrix}, and subsequently \eqref{eq:lyap_delta} is satisfied by invoking Theorem~\ref{thm:rank_lyap}.
\end{proof}

The next lemma establishes a useful fact associated with Definition~\ref{def:tnn}.

\begin{lemma}[TNN via Von Neumann's inequality \cite{hu2012fast}]
	\label{lem:tnn_von}
	Let $\kfn{X}{r} \coloneqq \sum _{i=1}^{r} \sigma_i(X)$ denote the Ky Fan norm of a matrix~$X \in \R^{m\times n}$ with respect to an integer~$r \leq \min\{m,n\}$.
	Then, 
	the TNN can be written as
	\begin{align*}
	\tnn{X}{r} = \| X \|_\ast - \kfn{X}{r},
	\end{align*}	
	which is a \emph{difference-of-convex} function of $X$. Moreover, the TNN is equivalently given by
	\begin{align}
	\label{eq:sup_tnn}
	\tnn{X}{r} = \| X \|_\ast -
	\underset{\substack{LL^\tp = I_r  \\ RR^\tp = I_r}}{\sup}\tr\{LXR^\tp\},
	\end{align}
	for $L \in \R^{r \times m}$ and $R \in \R^{r \times n}$.
\end{lemma}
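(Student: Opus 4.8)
The plan is to establish three separate facts: (\emph{i}) the identity $\tnn{X}{r} = \|X\|_\ast - \kfn{X}{r}$, (\emph{ii}) the difference-of-convex structure, and (\emph{iii}) the variational formula \eqref{eq:sup_tnn}. Item (\emph{i}) is essentially immediate from the definitions: by Definition~\ref{def:tnn}, $\tnn{X}{r} = \sum_{i=r+1}^{\min\{m,n\}}\sigma_i(X)$, while $\|X\|_\ast = \sum_{i=1}^{\min\{m,n\}}\sigma_i(X)$ and $\kfn{X}{r} = \sum_{i=1}^{r}\sigma_i(X)$ by hypothesis, so subtracting telescopes the sum. For item (\emph{ii}), the nuclear norm $\|X\|_\ast$ is convex (it is a norm), and the Ky Fan $r$-norm $\kfn{X}{r}$ is also convex (it is a norm on $\R^{m\times n}$, being the dual of the operator-norm-type ball, or equivalently the support function of a convex set); hence $\tnn{X}{r}$ is a difference of two convex functions. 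I would state this with a one-line justification that both terms are unitarily invariant matrix norms.

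The substantive part is item (\emph{iii}), the variational characterization $\kfn{X}{r} = \sup_{LL^\tp = I_r,\, RR^\tp = I_r}\tr\{LXR^\tp\}$, which combined with (\emph{i}) yields \eqref{eq:sup_tnn}. First I would fix the singular value decomposition $X = U\Sigma V^\tp$ with $U\in\R^{m\times m}$, $V\in\R^{n\times n}$ orthogonal and $\Sigma$ containing $\sigma_1(X)\ge\cdots\ge\sigma_{\min\{m,n\}}(X)$ on its diagonal. For the lower bound, I would exhibit a feasible choice: take $L$ to be the first $r$ rows of $U^\tp$ and $R$ the first $r$ rows of $V^\tp$; then $LL^\tp = I_r$, $RR^\tp = I_r$, and $\tr\{LXR^\tp\} = \sum_{i=1}^r \sigma_i(X) = \kfn{X}{r}$, showing the supremum is at least $\kfn{X}{r}$. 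For the reverse inequality, I would appeal to Von Neumann's trace inequality: for any matrices $P,Q$ of compatible size, $\tr\{PQ\} \le \sum_i \sigma_i(P)\sigma_i(Q)$. Writing $\tr\{LXR^\tp\} = \tr\{X(R^\tp L)\}$ and noting that $R^\tp L \in \R^{n\times m}$ has all singular values equal to $0$ or $1$ with at most $r$ of them nonzero (since $L$ and $R$ have orthonormal rows, so $L^\tp L$ and $R^\tp R$ are rank-$r$ orthogonal projections), Von Neumann's inequality gives $\tr\{LXR^\tp\} \le \sum_{i=1}^{r}\sigma_i(X)\cdot 1 = \kfn{X}{r}$. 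Taking the supremum over all feasible $L,R$ completes the argument, and substituting into $\|X\|_\ast - \kfn{X}{r}$ gives \eqref{eq:sup_tnn}.

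The main obstacle I anticipate is the bookkeeping in the reverse inequality of item (\emph{iii}): one must carefully verify that $R^\tp L$ has at most $r$ nonzero singular values, each bounded by $1$, to invoke Von Neumann's inequality correctly, and then match the ordering of singular values so that the bound collapses to exactly the sum of the $r$ largest $\sigma_i(X)$ rather than some other $r$-term sum. This is where a clean statement of Von Neumann's inequality (with the singular values of both factors sorted in descending order) is needed, together with the observation that $\sigma_i(R^\tp L)\le \|R^\tp L\| \le \|R^\tp\|\,\|L\| = 1$ and $\rank(R^\tp L)\le r$. Everything else is routine, and since the paper cites \cite{hu2012fast} for this lemma, I would keep the proof brief and refer to that source for the standard details.
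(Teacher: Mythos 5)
Your proposal is correct and follows essentially the same route as the paper: the telescoping identity $\tnn{X}{r}=\|X\|_\ast-\kfn{X}{r}$, convexity of both norms, and reduction of \eqref{eq:sup_tnn} to the variational characterization $\kfn{X}{r}=\sup\tr\{LXR^\tp\}$ via Von Neumann's trace inequality. The only difference is that you sketch a proof of that characterization (the SVD-based feasible choice of $L,R$ for the lower bound, and Von Neumann's inequality applied to $X$ and the rank-$\le r$, norm-$\le 1$ matrix $R^\tp L$ for the upper bound), whereas the paper delegates exactly this step to an appendix lemma cited from \cite{hu2012fast} and Horn--Johnson; your sketch is sound.
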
	
\begin{proof}
	We have $\| X \|_\ast - \kfn{X}{r} = \sum_{i=1}^{\min\{m,n\}}\sigma_i(X) - \sum_{i=1}^{r}\sigma_i(X) = \sum _{i=r+1}^{\min\{m,n\}} \sigma_i(X) = \tnn{X}{r}$. This form is clearly a difference of convex functions, since it is a difference between the nuclear and Ky Fan norms of $X$. 
	Equation~\eqref{eq:sup_tnn} is proved by observing the equivalence of $\kfn{X}{r}$ with 
	${\sup_{LL^\tp = I_r ,  RR^\tp = I_r}}\tr\{LXR^\tp\}$,
	as established by Lemma~\ref{lem:von_neumann} in the Appendix. The supremum term is defined over a family of affine functions parameterized by the matrices $L$ and $R$; hence, it is convex.
\end{proof}


Using Corollary~\ref{cor:suff_lyap}, we can reformulate~\ref{pbm:pone} by seeking to minimize $\tnn{\Z(W,H,\Delta)}{2n}$ subject to the reachability requirements in \eqref{eq:spec} and structural constraints in \eqref{eq:poly}. Using Lemma~\ref{lem:tnn_von}, a solution to~\ref{pbm:pone} can be found by solving the following problem. 

\begin{pbm}{$\Ponedc$}[Difference-of-norms problem]\label{pbm:ponedc}
\begin{align*}
\underset{W,H,\Delta}{\minimize} &\quad \| \Z(W,H,\Delta) \|_\ast -\!\!\!\!
\underset{\substack{LL^\tp = I_{2n}\\ RR^\tp = I_{2n}}}{\sup}
\!\!\!\tr\{L\,\Z(W,H,\Delta)\,R^\tp\}\\
\subjecto&\quad W \in \spec, \quad \Delta \in \poly.
\end{align*}
\end{pbm}
\resub{
	As established in Theorem~\ref{thm:stab_lyap_generic}, a solution to~\ref{pbm:ponedc} will fulfill the stability constraint in~\eqref{eq:schur_const} \almostsurely{}. 
	Further, despite its non-convexity, \ref{pbm:ponedc} has a known global optimal value when \ref{pbm:pone} is feasible. From Corollary~\ref{cor:suff_lyap}, this optimal value is equal to~$\tnn{\Z(W,H,\Delta)}{2n}=0$.
}

Next, taking inspiration from related problems in the literature~\cite{hu2012fast}, we employ a specific strategy consisting of solving a sequence of convex problems.
More specifically, a convex relaxation of \ref{pbm:ponedc} is obtained by replacing the supremum over parameters $L$ and $R$ in~\eqref{eq:sup_tnn} by fixed values~$\fix L$ and $\fix R$, respectively, as formalized next.

\begin{pbm}{$\Ponesub$}[Convex sub-problem for \ref{pbm:ponedc}]
	\label{pbm:ponesub}
	For fixed $\fix L \in \R^{2n \times 4n}$ and $\fix R \in \R^{2n \times 3n}$, we define the convex problem~$\mathcal{C}( \fix L, \fix R ; \theta)$ as
	\begin{align*}
	\underset{W,H,\Delta}{\minimize} &\quad\| \Z(W,H,\Delta) \|_\ast -	
	\tr\{ \fix  L\,\Z(W,H,\Delta)\, \fix  R^\tp\}\\
	\subjecto&\quad W \in \spec, \quad \Delta \in \poly.
	\end{align*}
\end{pbm}

Subsequently, using Von Neumann's trace inequality in Lemma~\ref{lem:von_neumann}, a sequence of convex problems can be defined by iteratively solving \ref{pbm:ponesub} according to the following rule: At each iteration $k$, the parameters~$L^{(k)}$ and $R^{(k)}$ are fixed, and convex sub-problem $\mathcal{C}(L^{(k)}, R^{(k)} ; \theta)$ is solved. Then, the left- and right-singular vectors of the current solution~$\Z^{(k)}(W,H,\Delta) = \argmin_{W,H,\Delta} \mathcal{C}(L^{(k)}, R^{(k)} ; \theta)$ are used, respectively, to update parameters~$L^{(k+1)}$ and $R^{(k+1)}$ for the next iteration. Such procedure, summarized in Algorithm~\ref{alg:alg1}, generates a monotonically convergent sequence of objective function values, as shown in the next theorem.

\newcommand{\StepA}{\textsc{Step~A}}
\newcommand{\StepB}{\textsc{Step~B}}
\begin{algorithm}
	\small
	\begin{algorithmic}[1]
		\Statex \textbf{Inputs:}
		\Statex \qquad reachability parameters $\theta$, tolerance $\epsilon_\eta$
		\Statex \qquad initial value  $Z^{(0)} \leftarrow \Z(W^{(0)},H^{(0)},\Delta^{(0)})$ \vspace{1mm}
		\State $k\leftarrow 0$
		\While{$\tnn{Z^{(k)}}{2n}  \geq \epsilon_\eta $}		
		\Statex \quad \StepA:
		\State $( U^{(k)}, \Sigma^{(k)}, V^{(k)} ) \leftarrow \svd\{Z^{(k)}\}$
		\State $L^{(k)} \leftarrow [u_1^{(k)}|\ldots| u_{2n}^{(k)}]^\tp$, $R^{(k)} \leftarrow [v_1^{(k)}|\ldots|v_{2n}^{(k)}]^\tp$		
		\Statex \quad \StepB:
		\State $(W^{(k+1)},H^{(k+1)},\Delta^{(k+1)}) \!\leftarrow\!\argmin \mathcal{C}(L^{(k)},R^{(k)}; \theta)$
		\State $Z^{(k+1)} \leftarrow \Z(W^{(k+1)},H^{(k+1)},\Delta^{(k+1)})$
		\State $k \leftarrow k+1$
		\EndWhile \vspace{1mm}
	\end{algorithmic}
	\caption{Sequential Convex Program for \ref{pbm:ponedc}}
	\label{alg:alg1}
\end{algorithm}
	
\begin{theorem}[Convergence of Algorithm~\ref{alg:alg1}]
	\label{thm:conv}
	Let $\alpha_k \coloneqq \tnn{\Z(W^{(k)},H^{(k)},\Delta^{(k)})}{2n}$. 
	Then, the sequence $\{\alpha_{k}\}$ generated by $(W^{(k)},H^{(k)},\Delta^{(k)}) = \argmin \,\mathcal C(L^{(k)},R^{(k)}; \theta)$, according to Algorithm~\ref{alg:alg1}, is monotonically non-increasing. 
\end{theorem}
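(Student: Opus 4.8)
The plan is to run a majorization–minimization argument that exploits the difference-of-convex structure from Lemma~\ref{lem:tnn_von}. The key fact I would isolate first is that, for \emph{any} admissible pair $(L,R)$ with $LL^\tp=I_{2n}$ and $RR^\tp=I_{2n}$, the objective of the convex sub-problem $\mathcal{C}(L,R;\theta)$, namely $\|\Z(W,H,\Delta)\|_\ast - \tr\{L\,\Z(W,H,\Delta)\,R^\tp\}$, is a global upper bound for $\tnn{\Z(W,H,\Delta)}{2n}$: this is immediate from \eqref{eq:sup_tnn}, since replacing the supremum by a particular feasible choice can only decrease the subtracted term. Moreover, by Von Neumann's trace inequality (Lemma~\ref{lem:von_neumann} in the Appendix), this upper bound is \emph{tight} exactly when $L$ and $R$ are formed by the leading $2n$ left- and right-singular vectors of $\Z(W,H,\Delta)$ — which is precisely the choice made in \StepA{} of Algorithm~\ref{alg:alg1}.

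With this in hand, I would write $Z^{(k)}\coloneqq\Z(W^{(k)},H^{(k)},\Delta^{(k)})$ and argue along the following chain. First, by the tightness just noted applied at $Z^{(k)}$ with the singular-vector matrices $L^{(k)},R^{(k)}$ of \StepA{},
\[
\alpha_k = \tnn{Z^{(k)}}{2n} = \|Z^{(k)}\|_\ast - \tr\{L^{(k)} Z^{(k)} (R^{(k)})^\tp\},
\]
i.e.\ $\alpha_k$ is exactly the value of the objective of $\mathcal{C}(L^{(k)},R^{(k)};\theta)$ at $(W^{(k)},H^{(k)},\Delta^{(k)})$. Second, since the sub-problem's feasible set $\{W\in\spec,\ \Delta\in\poly\}$ does not depend on $k$, the triple $(W^{(k)},H^{(k)},\Delta^{(k)})$ is feasible for $\mathcal{C}(L^{(k)},R^{(k)};\theta)$, so the minimizer $(W^{(k+1)},H^{(k+1)},\Delta^{(k+1)})$ produced in \StepB{} has no larger objective value:
\[
\|Z^{(k+1)}\|_\ast - \tr\{L^{(k)} Z^{(k+1)} (R^{(k)})^\tp\} \ \le\ \|Z^{(k)}\|_\ast - \tr\{L^{(k)} Z^{(k)} (R^{(k)})^\tp\} \ =\ \alpha_k.
\]
Third, I would invoke Lemma~\ref{lem:von_neumann} again, now as an \emph{inequality}, at $Z^{(k+1)}$ with the (generally no longer optimal) pair $(L^{(k)},R^{(k)})$, giving $\tr\{L^{(k)} Z^{(k+1)} (R^{(k)})^\tp\}\le \kfn{Z^{(k+1)}}{2n}$, so that
\[
\alpha_{k+1} = \|Z^{(k+1)}\|_\ast - \kfn{Z^{(k+1)}}{2n} \ \le\ \|Z^{(k+1)}\|_\ast - \tr\{L^{(k)} Z^{(k+1)} (R^{(k)})^\tp\} \ \le\ \alpha_k,
\]
which establishes monotonic non-increase.

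The argument is essentially bookkeeping, so I do not expect a serious obstacle; the two points needing care are (i) using Lemma~\ref{lem:von_neumann} in both of its roles — as an equality, to identify $\alpha_k$ with the sub-problem objective value under the singular-vector choice of \StepA{}, and as an inequality, to bound $\alpha_{k+1}$ from above by the sub-problem objective at the new iterate — and (ii) observing that the sub-problems share a common, iteration-independent feasible set, so that each iterate is admissible in the next sub-problem; this last point also uses that the input triple $(W^{(0)},H^{(0)},\Delta^{(0)})$ satisfies $W^{(0)}\in\spec$, $\Delta^{(0)}\in\poly$, which I would state as an assumption on the algorithm's initialization. Continuity of $\|\cdot\|_\ast$ and of the linear term in $(W,H,\Delta)$, together with convexity of each $\mathcal{C}(L^{(k)},R^{(k)};\theta)$, ensures the \StepB{} minimizations are well posed, so nothing further is required.
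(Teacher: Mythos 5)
Your proof is correct and follows essentially the same route as the paper's: identify $\alpha_k$ with the sub-problem objective at $(L^{(k)},R^{(k)})$ via the tight case of Von Neumann's inequality, use feasibility of the previous iterate plus optimality of \StepB{} to bound the surrogate, and then use Von Neumann's inequality again to majorize $\alpha_{k+1}$. The only cosmetic difference is that you make the two roles of Lemma~\ref{lem:von_neumann} (equality versus inequality) explicit, whereas the paper packages the same chain as $p_A^{(k+1)} \leq p_B^{(k)} \leq p_A^{(k)}$.
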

\begin{proof}
	We assume that the sets $\poly$ and $\spec$
	are non-empty, i.e., there exists at least one feasible solution~$(W^{(0)},H^{(0)},\Delta^{(0)})$ to the relaxed problem $\mathcal{C}(L^{(0)},R^{(0)}; \theta)$. 
	\resub{For example, for the worst-case minimum energy design, a feasible solution can be constructed by letting any $\Delta^{(0)} \in \poly$, $W^{(0)}=\tilde \lambda I_n$, and  $H^{(0)}=  W^{(0)}(A + \Delta)^\tp$}.
	Because \StepA{} (in Algorithm~\ref{alg:alg1}) does not affect feasibility of the initial feasible solution~$(W^{(0)},H^{(0)},\Delta^{(0)})$, this solution will remain feasible for~\StepB,  which will also retain feasibility, by construction. Therefore, a solution $(W^{(k)},H^{(k)},\Delta^{(k)})$ will remain feasible at any iteration~$k$.
	Let $\phi(Z,L,R) \coloneqq \| \Z(W,H,\Delta)\|_\ast - \tr\{ L\,\Z(W,H,\Delta)\,R^\tp\}$ be the value of the objective function of $\mathcal C(L,R;\theta)$ evaluated at $Z$, for $Z \equiv \Z(W,H,\Delta)$.
	We now analyze the behavior of the objective function at any iteration~$k$. 
	Denote by $p_A^{(k)} \coloneqq \phi(Z^{(k)},L^{(k)},R^{(k)})$ the objective function value returned after execution of \StepA{} in Algorithm~\ref{alg:alg1}. Likewise, denote by $p_B^{(k)} \coloneqq \phi(Z^{(k+1)},L^{(k)},R^{(k)})$ the objective function value returned after execution of \StepB. Because \StepB{} involves the solution of a (feasible) convex optimization problem, we have $p_B^{(k)} \leq p_A^{(k)}$. Further, by invoking Lemma~\ref{lem:tnn_von}, we have that $p_A^{(k+1)} \leq p_B^{(k)}$. Therefore, we have $p_A^{(k+1)} \leq p_A^{(k)}$ for any $k$, and $\alpha_k = p_A^{(k)}$. Thus, for any $\epsilon_\eta > 0$, there exists an iteration number~$k$ such that $|\alpha_{k+1} - \alpha_{k}| \leq \epsilon_\eta$, and the sequence $\{\alpha_k\}$ is monotonically non-increasing.
\end{proof}

\newcommand{\stat}{\infty}

\subsection{Design for Reachability with Structural Penalties}
\label{sec:penalty}

\resub{
We now build on the results obtained for the feasibility problem~\ref{pbm:pone} to address the more challenging problem~\ref{pbm:ptwo}, which seeks to penalize large magnitudes in the entries of~$\Delta$}. First, we observe that using the definition of the truncated nuclear norm introduced in the previous section, \ref{pbm:ptwo} can be approximated by solving the following problem \resub{for increasing values of the positive weight $\gamma$}.
\begin{pbm}{$\Ptwodc$}[Penalized difference-of-norms problem]
	\label{pbm:ptwodc}
	For $\gamma$ a positive scalar, a relaxation of  \ref{pbm:ptwo} can be written as
	\begin{align*}
	\underset{W,H,\Delta}{\minimize} &	\qquad \tnn{\Z(W,H,\Delta)}{2n} 	+ \gamma \|\Delta \|_1 \\[1mm]
	\subjecto&\quad W \in \spec, \quad \Delta \in \poly
	\end{align*}
	\begin{align*}
	\;\;=\underset{W,H,\Delta}{\minimize} & \qquad \| \Z(W,H,\Delta) \|_\ast + \gamma \|\Delta \|_1 \\[-2mm]
	&\qquad -\!\!\!\! \underset{\substack{LL^\tp = I_{2n}\\ RR^\tp = I_{2n}}}{\sup} \tr\{L\,\Z(W,H,\Delta)\,R^\tp\}\\[3mm]
	\subjecto&\quad W \in \spec, \quad \Delta \in \poly,
	\end{align*}
\end{pbm}
\resub{
	where we have removed the explicit stability constraint \eqref{eq:schur_const} based on the results presented in Theorem~\ref{thm:stab_lyap_generic}.%
}  
Besides using a relaxation strategy similar to the one previously used for \ref{pbm:ponedc} (i.e., replacing the supremum operator with fixed values for $L$ and $R$), we associate with \ref{pbm:ptwodc} the following convex sub-problem.
\begin{pbm}{$\Ptwosub$}[Convex sub-problem for \ref{pbm:ptwodc}]
	\label{pbm:ptwosub}
	For $\gamma >0 $ with fixed $\fix L \in \R^{2n \times m}$ and $\fix R \in \R^{2n \times n}$, we define the convex sub-problem~$\mathcal{C}_{\gamma}(\fix L,  \fix R; \theta)$ as
	\normalsize
	\begin{align*}
	\underset{W,H,\Delta}{\minimize} &\quad\!\!\| \Z(W,H,\Delta) \|_\ast \!-\!
	\tr\{ \fix L\,\Z(W,H,\Delta)\, \!\fix R^\tp\!\}
	\!+ \gamma \| \Delta \|_1 \\
	\subjecto&\quad W \in \spec, \quad \Delta \in \poly.
	\end{align*}
\end{pbm}
\normalsize

\resub{
	Note that $\Ptwodc$ presents two competing objectives with relative importance balanced by the weight~$\gamma$. On one hand, we have the truncated nuclear norm term, associated with the residual of the Lyapunov equation~\eqref{eq:lyap_delta}. On the other hand, we have the 1-norm penalty aiming to promote sparsity on the design variable~$\Delta$.%
}
\resub{	
	As a result, a sequential optimization strategy similar to the one applied for $\Ponedc$ can introduce an unwanted side-effect: depending on the magnitude of $\gamma$, convergence in terms of the truncated nuclear norm is not guaranteed.%
}
More specifically, while the overall cost of $\Ptwodc$ can be still assured to be monotonically non-increasing (using similar arguments from Theorem~\ref{thm:conv}), higher values of $\gamma$ might promote iterations where a decrease in the overall objective function (including the penalty term $\gamma \| \Delta \|_1$) will be obtained at the expense of an increase in the term associated with the truncated nuclear norm $\| \Z(W,H,\Delta) \|_\ast -	
\tr\{ \fix L\,\Z(W,H,\Delta)\, \fix R^\tp\}$.

\resub{
	To control this effect, we propose an iterative procedure that seeks an approximation for the largest value of $\gamma$ for which $\Ptwodc$ can be solved. The proposed procedure begins by
	solving \ref{pbm:ptwodc}$(\gamma)$ with $\gamma = 0$. In this configuration, \ref{pbm:ptwodc}$(\gamma)$ is equivalent to the unpenalized problem~\ref{pbm:ponedc}. Therefore,   
	Algorithm~\ref{alg:alg1} can be applied to achieve convergence as established in Theorem~\ref{thm:conv}. 
	Then, we attempt to solve \ref{pbm:ptwodc}$(\gamma)$ for increasing values of~$\gamma$, using the solution of the current problem as an initialization for the next problem, until a stopping criterion is met.
	This type of strategy is commonly referred to as \emph{regularization path}, and has been applied to control problems, for instance, in~\cite{giesen2012regularization,blomberg2014approximate}.

	Formally, we consider a sequence~$\{\gamma_t\}_{t=1}^N$ of increasing positive weights, and begin by applying Algorithm~\ref{alg:alg1} to solve $\Ptwodc$$(\gamma_0)$ with a preliminary weight~$\gamma_0=0$. If Algorithm~\ref{alg:alg1} fails to produce a feasible solution at convergence, we declare $\Ptwodc$ infeasible.
	Otherwise, if it produces a solution~$\Z(\sta W, \sta H,\sta \Delta)$ with $\tnn{\Z(\sta W, \sta H,\sta \Delta)}{2n} < \epsilon_\eta$, we make $Z^{(0)} \equiv \Z(\sta W, \sta H,\sta \Delta)$ and use $L^{(0)} = [u_1^{(0)},\ldots, u_{2n}^{(0)}]^\tp$ and $R^{(0)} = [v_1^{(0)},\ldots,v_{2n}^{(0)}]^\tp$ from $\svd \{ Z^{(0)}\}$ as initial parameters for $\Ptwodc$$(\gamma_1)$. Then, for each $\gamma_t$, we seek to solve $\Ptwodc$$(\gamma_t)$ by a sequence of convex subproblems~$\{\mathcal{C}_{\gamma_t}(L^{(k)},R^{(k)}; \theta)\}_k$ and evaluate the stopping condition in terms of the inner-loop solution~$Z^{(k)}\equiv \Z(W^{(k)}, H^{(k)},\Delta^{(k)})$ to each $\mathcal{C}_{\gamma_t}(L^{(k)},R^{(k)}; \theta)$, as follows. If $\tnn{Z^{(k)}}{2n} < \epsilon_\eta$, we consider the algorithm to have converged for the current weight~$\gamma_t$, and move on to the next weight in the sequence.
	Otherwise, we choose to stop the sequence if $\tnn{Z^{(k)}}{2n} \geq  \tnn{Z^{(k-1)}}{2n}$ holds for~$K>1$ successive iterations of $\mathcal{C}_{\gamma_t}(L^{(k)},R^{(k)}; \theta)$, where $K$ is a parameter of choice. 
	For this purpose, we define the function $\mathrm{stop}_K(Z^{(\resub{\min\{0,k-K+1\}})}, \ldots, Z^{(k)})$, which returns \textsc{true} if 
	$\tnn{Z^{(k)}}{2n} \geq  \tnn{Z^{(k-1)}}{2n}$ for $k-K+2,\ldots,k$ when $k\geq K$, and \textsc{false} otherwise.
	The proposed procedure is summarized in Algorithm~\ref{alg:alg2}.
}

\begin{algorithm}	\small
	\begin{algorithmic}[1]
		\Statex \textbf{Inputs:}
		\Statex \qquad parameters $\theta$,  tolerance $\epsilon_\eta$, stopping number $K$
		\Statex \qquad penalization weights $\gamma_0=0$ and $\gamma_1, \ldots, \gamma_N$
		\Statex \qquad initial value  $Z^{(0)} \leftarrow \Z(W^{(0)},H^{(0)},\Delta^{(0)})$
		\vspace{1mm}
		\State $k\leftarrow 0$, $t \leftarrow 0$		
		\While{\resub{\textbf{not }$\mathrm{stop}_K(Z^{(\min\{0,k-K+1\})}, \ldots, Z^{(k)})$}}
		\While{$\tnn{Z^{(k)}}{2n}  \geq \epsilon_\eta $}
		\Statex \qquad \StepA:
		\State  \!\!\!\!\!\!$( U^{(k)}, \Sigma^{(k)}, V^{(k)} ) =  \svd\{Z^{(k)}\}$
		\State \!\!\!\!\!\!$L^{(k)} \leftarrow [u_1^{(k)},\ldots, u_{2n}^{(k)}]^\tp$, $R^{(k)} \leftarrow [v_1^{(k)},\ldots,v_{2n}^{(k)}]^\tp$		
		\Statex \qquad \StepB:
		\State \!\!\!\!\!\!$(W^{(k+1)},H^{(k+1)},\Delta^{(k+1)}) \!\leftarrow \!\argmin \mathcal{C}_{\gamma_t}(L^{(k)},R^{(k)}; \theta)$
		\State \!\!\!\!\!\!$Z^{(k+1)} \leftarrow \Z(W^{(k+1)},H^{(k+1)},\Delta^{(k+1)})$
		\State \!\!\!\!\!\!$ k \leftarrow k  + 1$
		\EndWhile
		\State $(W^{(0)},H^{(0)},\Delta^{(0)}) \leftarrow (W^{(k+1)},H^{(k+1)},\Delta^{(k+1)})$
		\State $t\leftarrow t+1$		
		\EndWhile \vspace{1mm}
	\end{algorithmic}
	\caption{Regularization Path Algorithm for \ref{pbm:ptwodc}}
	\label{alg:alg2}
\end{algorithm}

\section{Computational Experiments}

\newcommand{\impMinLam}{design for worst-case reachabililty}
\newcommand{\impTrInv}{design for average reachability}
\newcommand{\ratio}{\rho}

To illustrate the effectiveness of our proposed approaches, in this section we perform several computational experiments considering both worst-case and average reachability designs. 
In the first set of experiments, we analyze random networks generated by the directed \erdos{} model. The main goal is to verify the convergence of our algorithm for different random system realizations and different reachability objectives. As we will illustrate, our algorithm typically reaches solutions characterized by a very low value (i.e., below a pre-specified tolerance) of the truncated nuclear norm after a relatively small number of iterations.

In the second set of experiments, we examine a networked system with the topology of the \ieeebus{}\cite{christie2000power}. 
\resub{
	We take inspiration from~\cite{summers2014submodularity}, which considers the problem of improving transient stability properties of power grids to damp frequency oscillations and prevent rotor angle instability. In this setting, the physical design variables are associated with the placement of high voltage direct current (HVDC) links, which are modeled as ideal AC current sources on the terminal buses~\cite{fuchs2013actuator}. Further, in their problem formulation, the nonlinear swing equations of system are linearized, and the HVDC placements are evaluated using controllability Gramian metrics. Our presentation consists of a simplification of the aforementioned experiment, with the goal of illustrating the effects of sparsity obtained by applying the procedure for design with structural penalties described in Section~\ref{sec:penalty}. Further, as described in our problem statement, we restrict our edge design variables to follow the existing network topology. 
}%
The code and data generated for both sets of experiments are available in~\cite{github2018netdeco}.

\subsection{\erdos}

\begin{figure*}[h]
	\centering
	\includegraphics
	[height=0.28\textheight]
	{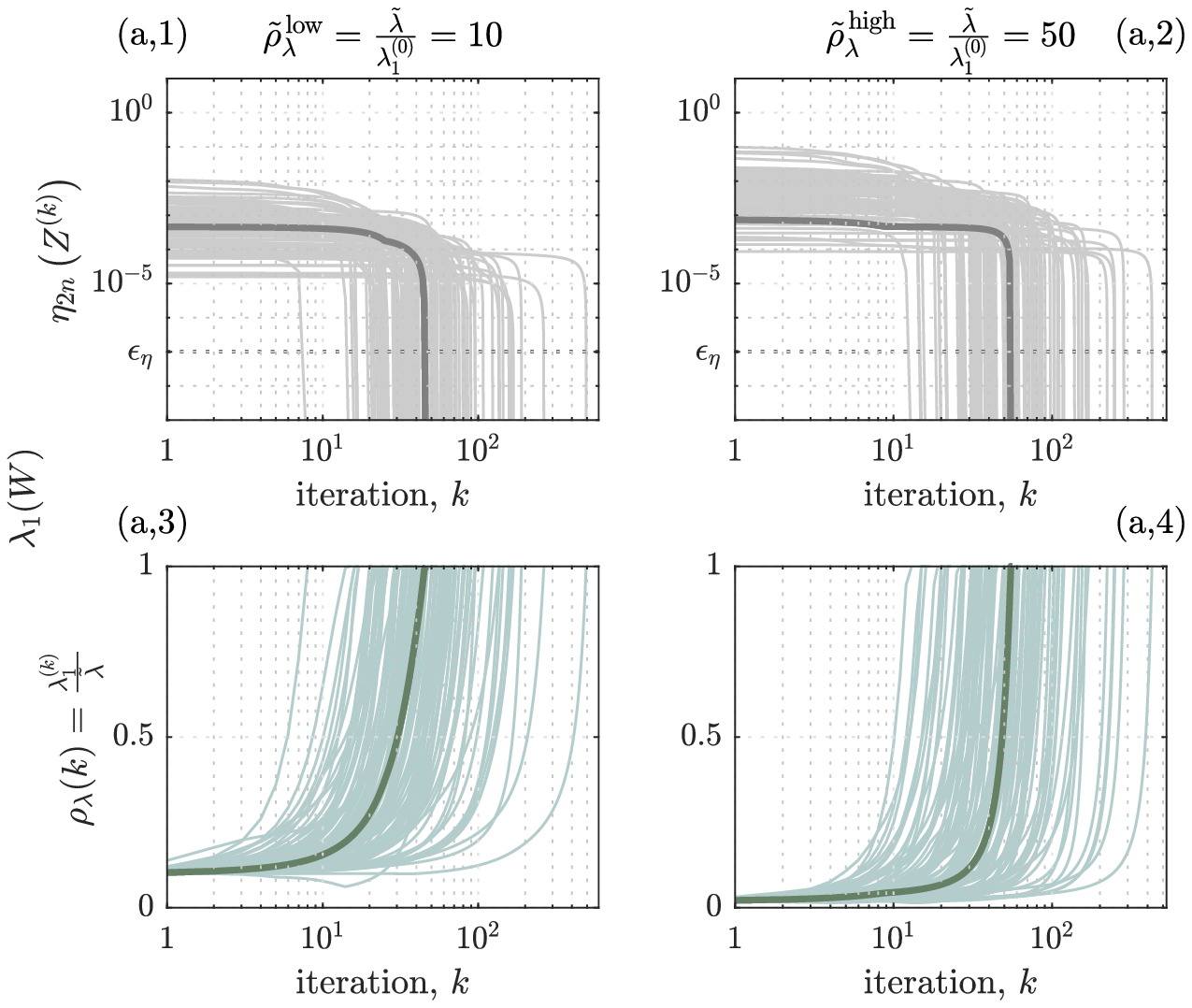}
	\includegraphics
	[height=0.28\textheight]
	{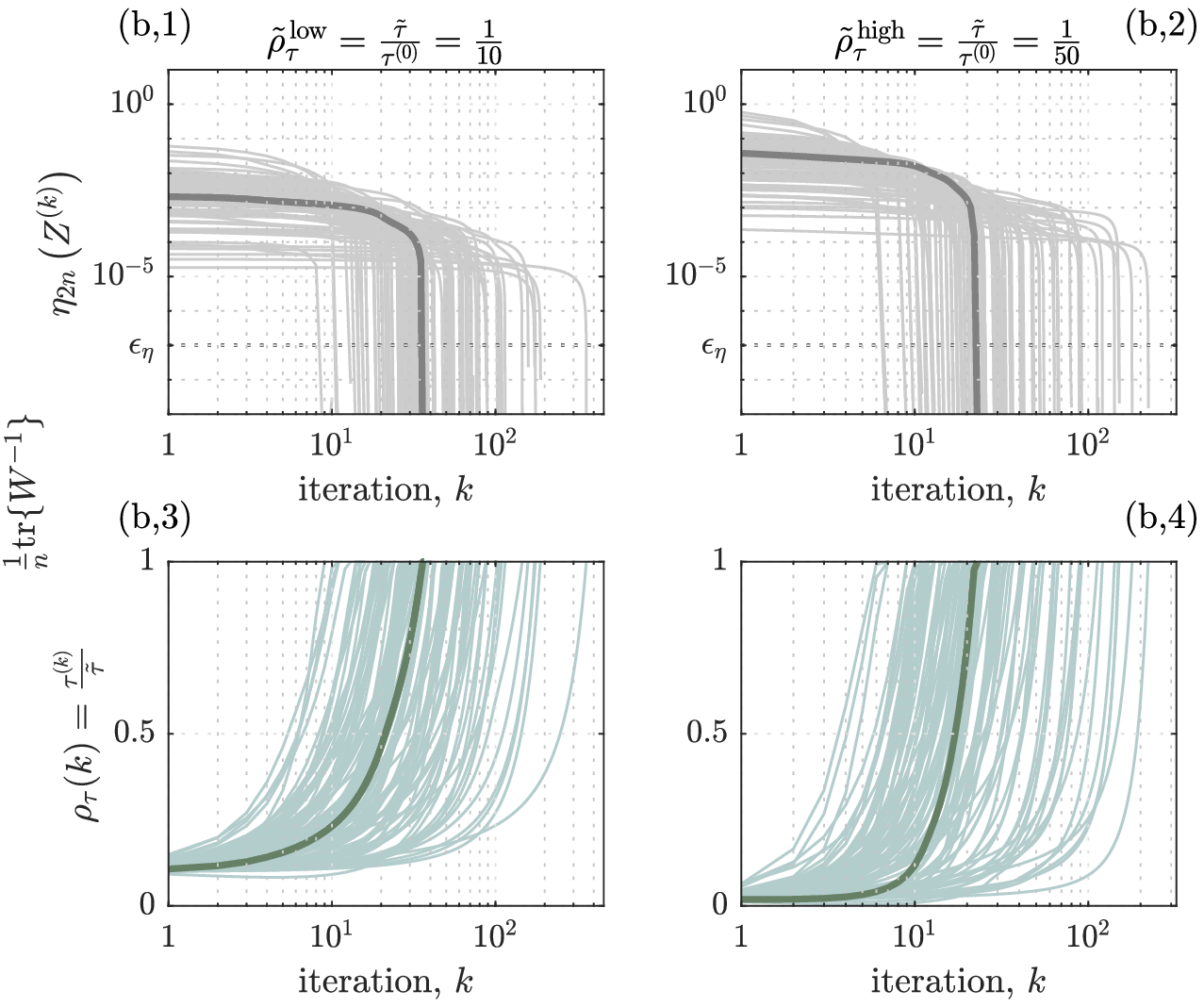}
	\caption{
		Improvement of reachability for \erdos{} systems for $L=100$ random realizations of $(A,B)$ system pairs. In (a,$\cdot$), we consider the \impMinLam{} problem, while in (b,$\cdot$) we present results for the \impTrInv{} problem.
		For the first case, \ressub{panels}~(a,1) and (a,2) present the truncated nuclear norm~$\tnn{Z^{(k)}}{2n}$ as a function of the algorithm iteration~$k$, considering, respectively, low and high target reachability improvement values 
		\ressub{(i.e., $\tilde \ratio_\lambda^{\mathrm{low}}$ and $\tilde  \ratio_\lambda^{\mathrm{high}}$)}.
		Correspondingly, (a,3) and (a,4) display the current-to-target reachability improvement ratios~\ressub{$\ratio_\lambda(k)= \lambda_1(k) / \tar \lambda$} for the same system realizations and low/high improvement targets. A value of \ressub{$\ratio_\lambda(k)\geq 1$} implies the achievement of the desired reachability improvement \ressub{$\lambda_1(k) \geq   \tar \lambda$}. 
		Each thin line is associated with one of the $L=100$ \ressub{random \erdos{}} system realizations. The thicker line is associated with \ressub{the specific} system realization 
		\ressub{whose iteration number when the stopping criterion was met was in the median of the stopping iteration numbers for all system realizations.
			Likewise, panels~(b,1) and (b,2) display the truncated nuclear norm~$\tnn{Z^{(k)}}{2n}$ considering, respectively, low and high reachability improvement target values for the \impTrInv{} problem
			(i.e., $\tilde \ratio_\tau^{\mathrm{low}}$ and $\tilde  \ratio_\tau^{\mathrm{high}}$). Correspondingly, panels~(b,3) and (b,4) display the current-to-target reachability ratios~$\ratio_\tau(k)= \tau(k) / \tar \tau$ for the same system realizations and low/high improvement targets. A value of $\ratio_\tau(k)\geq 1$ implies the achievement of the desired reachability improvement $\tau(k) \leq  \tar \tau$.}		
	}
	\label{fig:results_erdos}
\end{figure*}

We generate $L=\sval{res_erdos}{par.p.l}$ random realizations of directed \erdos{} (ER) systems, with state dimension~\resub{$n=\sval{res_erdos}{par.p.n}$} and input dimension~\resub{$m=\sval{res_erdos}{par.p.m}$}.
Each system~$l=1,\ldots,L$ is defined by a pair $(A^{(l)},B^{(l)})$ that is generated as follows: The sparsity pattern encoded by the set $\{ (i,j) : i,j = 1,\ldots,n; (i,j) \in \G \}$, is obtained by following the ER process until the resulting density of nonzero entries, i.e., $\|A^{(l)}\|_0/n^2$, reaches a value of~\resub{$0.5$}.
The weights of the edges in the network are sampled from a standard uniform distribution, i.e., $[A^{(l)}]_{i,j} \sim \text{uniform}(0,1)$, for all $(i,j) \in \G$, with self-loops being allowed. To assure stability, the entries of each matrix $A^{(l)}$ were simultaneously scaled such that the absolute value of the largest eigenvalue of the matrix was less than one. The entries of the input matrices $B^{(l)} = [b_1^{(l)}| \ldots| b_m^{(l)}]$ were selected to have each column~$b_j$ ($j=1\ldots,m$) defined as a canonical indicator vector $e_{\pi_j(n)}$, where $\pi_j(n)$ denotes the index of the entry equal to $1$ and is obtained as a random permutation of the $1,\ldots,n$ possible indices. Each pair was tested for reachability by assuring that $\rank[\,\mathcal C (A^{(l)},B^{(l)})\,] = n$, where $\mathcal C(A,B) = [\, B\, | \, AB\,| \cdots | \,A^{n-1}B\, ]$.
%
%

We consider two types of design problems:
\emph{(i)} \emph{\impMinLam{}}, associated with the minimum eigenvalue~$\lambda_1(W)$,  and
\emph{(ii)} \emph{\impTrInv{}}, associated with~$\tau = \frac{1}{n}\tr\{ W^{-1}\}$.
For each objective, we explore two cases: one with a \emph{low} target improvement value, and one with a \emph{high} target improvement value. For the case of \impMinLam{}, we define the ratio of improvement~$\ratio_\lambda = \tar \lambda_1 /   \lambda_1$ 
and fix target values 
$\tar \ratio_\lambda^{\text{\,low}} = \sval{res_erdos}{par.s.lam_min_gains(1)}$
 and 
$\tar \ratio_\lambda^{\text{\,high}} = \sval{res_erdos}{par.s.lam_min_gains(2)}$.
For the case of \impTrInv{}, we define the ratio of improvement~$\ratio_\tau = \tar  \tau / \tau$ and fix target values 
$\tar \ratio_\tau^{\text{\,low}} = \frac{1}{\sval{res_erdos}{par.s.tr_inv_gains_rec(1)}}$
and 
$\tar \ratio_\tau^{\text{\,high}} =  \frac{1}{\sval{res_erdos}{par.s.tr_inv_gains_rec(2)}}$. The maximum and minimum allowed perturbation magnitudes~$[\Delta]_{i,j}$ were set to $\upsilon_{i,j}=\sval{res_erdos}{par.s.amax}$ and
$\iota_{i,j}=\sval{res_erdos}{par.s.amin}$, respectively, for all $i$ and $j$.
We then observe the evolution of the truncated nuclear norm $\tnn{Z^{(k)}}{2n}$ as a function of the iteration $k$ for each system realization, until a stopping criterion is met. In particular, 
this criterion was set to $\epsilon_\eta = \sval{res_erdos}{par.m.tol_tnn}$, i.e., the algorithm stops when it reaches an iteration $k^\star$ for which $\tnn{Z^{(k^\star)}}{2n} \leq \epsilon_\eta$. 
The results from the execution of the algorithm are presented in Figure~\ref{fig:results_erdos}.
It can be seen that $\tnn{Z^{(k)}}{2n}$ reached the threshold $\epsilon_\eta$ for all cases considered, indicating that the desired reachability improvement, as captured by the constraint $W \in \spec$, was feasible in relation to the structural constraints imposed by $\Delta \in \poly$. Further, the median iteration value $k^\star$ for which such threshold was achieved is below 100 for the four scenarios considered. 
Finally, it can observed that the iteration for which the desired improvement in reachability is achieved typically coincides with the iteration at which the truncated nuclear norm reaches the lowest point.

\subsection{IEEE Electric Power Network}

\newcommand{\gI}{\gamma_{\text{\,first}}}
\newcommand{\gF}{\gamma_{\text{\,final}}}
\newcommand{\gL}{\gamma_{\text{\,last}}}

We generate a network following the topology of the \ieeebus{}~\cite{christie2000power}, with state dimension~$n=\sval{res_ieeebus}{par.p.n}$ and input dimension~$m=\sval{res_ieeebus}{par.p.m}$. 
%
The maximum and minimum allowable perturbation magnitudes~$[\Delta]_{i,j}$ are set to $\upsilon_{i,j}=\sval{res_ieeebus}{par.s.amax}$ and
$\iota_{i,j}=\sval{res_ieeebus}{par.s.amin}$, respectively, for all $i$ and $j$.
\resub{As a simplification of the experiments presented in~\cite{summers2014submodularity}}, the initial weights of the network were symmetrically associated with the resistance values of the transmission lines, with particular numerical values set to those available in \cite{sousa2018}. 
The resulting matrix $A$ has sparsity pattern and weights as displayed next, with values rounded for compactness.
\begin{align*}
&\;\,A = \\
&\left [ \begin{smallmatrix}
\,\,\cdot\,\, & 0.06 & \,\,\cdot\,\, & \,\,\cdot\,\, & 0.22 & \,\,\cdot\,\, & \,\,\cdot\,\, & \,\,\cdot\,\, & \,\,\cdot\,\, & \,\,\cdot\,\, & \,\,\cdot\,\, & \,\,\cdot\,\, & \,\,\cdot\,\, & \,\,\cdot\,\, \\ 
0.06 & \,\,\cdot\,\, & 0.20 & 0.18 & 0.17 & \,\,\cdot\,\, & \,\,\cdot\,\, & \,\,\cdot\,\, & \,\,\cdot\,\, & \,\,\cdot\,\, & \,\,\cdot\,\, & \,\,\cdot\,\, & \,\,\cdot\,\, & \,\,\cdot\,\, \\ 
\,\,\cdot\,\, & 0.20 & \,\,\cdot\,\, & 0.17 & \,\,\cdot\,\, & \,\,\cdot\,\, & \,\,\cdot\,\, & \,\,\cdot\,\, & \,\,\cdot\,\, & \,\,\cdot\,\, & \,\,\cdot\,\, & \,\,\cdot\,\, & \,\,\cdot\,\, & \,\,\cdot\,\, \\ 
\,\,\cdot\,\, & 0.18 & 0.17 & \,\,\cdot\,\, & 0.04 & \,\,\cdot\,\, & 0.21 & \,\,\cdot\,\, & 0.56 & \,\,\cdot\,\, & \,\,\cdot\,\, & \,\,\cdot\,\, & \,\,\cdot\,\, & \,\,\cdot\,\, \\ 
0.22 & 0.17 & \,\,\cdot\,\, & 0.04 & \,\,\cdot\,\, & 0.25 & \,\,\cdot\,\, & \,\,\cdot\,\, & \,\,\cdot\,\, & \,\,\cdot\,\, & \,\,\cdot\,\, & \,\,\cdot\,\, & \,\,\cdot\,\, & \,\,\cdot\,\, \\ 
\,\,\cdot\,\, & \,\,\cdot\,\, & \,\,\cdot\,\, & \,\,\cdot\,\, & 0.25 & \,\,\cdot\,\, & \,\,\cdot\,\, & \,\,\cdot\,\, & \,\,\cdot\,\, & \,\,\cdot\,\, & 0.20 & 0.26 & 0.13 & \,\,\cdot\,\, \\ 
\,\,\cdot\,\, & \,\,\cdot\,\, & \,\,\cdot\,\, & 0.21 & \,\,\cdot\,\, & \,\,\cdot\,\, & \,\,\cdot\,\, & 0.18 & 0.11 & \,\,\cdot\,\, & \,\,\cdot\,\, & \,\,\cdot\,\, & \,\,\cdot\,\, & \,\,\cdot\,\, \\ 
\,\,\cdot\,\, & \,\,\cdot\,\, & \,\,\cdot\,\, & \,\,\cdot\,\, & \,\,\cdot\,\, & \,\,\cdot\,\, & 0.18 & \,\,\cdot\,\, & \,\,\cdot\,\, & \,\,\cdot\,\, & \,\,\cdot\,\, & \,\,\cdot\,\, & \,\,\cdot\,\, & \,\,\cdot\,\, \\ 
\,\,\cdot\,\, & \,\,\cdot\,\, & \,\,\cdot\,\, & 0.56 & \,\,\cdot\,\, & \,\,\cdot\,\, & 0.11 & \,\,\cdot\,\, & \,\,\cdot\,\, & 0.08 & \,\,\cdot\,\, & \,\,\cdot\,\, & \,\,\cdot\,\, & 0.27 \\ 
\,\,\cdot\,\, & \,\,\cdot\,\, & \,\,\cdot\,\, & \,\,\cdot\,\, & \,\,\cdot\,\, & \,\,\cdot\,\, & \,\,\cdot\,\, & \,\,\cdot\,\, & 0.08 & \,\,\cdot\,\, & 0.19 & \,\,\cdot\,\, & \,\,\cdot\,\, & \,\,\cdot\,\, \\ 
\,\,\cdot\,\, & \,\,\cdot\,\, & \,\,\cdot\,\, & \,\,\cdot\,\, & \,\,\cdot\,\, & 0.20 & \,\,\cdot\,\, & \,\,\cdot\,\, & \,\,\cdot\,\, & 0.19 & \,\,\cdot\,\, & \,\,\cdot\,\, & \,\,\cdot\,\, & \,\,\cdot\,\, \\ 
\,\,\cdot\,\, & \,\,\cdot\,\, & \,\,\cdot\,\, & \,\,\cdot\,\, & \,\,\cdot\,\, & 0.26 & \,\,\cdot\,\, & \,\,\cdot\,\, & \,\,\cdot\,\, & \,\,\cdot\,\, & \,\,\cdot\,\, & \,\,\cdot\,\, & 0.20 & \,\,\cdot\,\, \\ 
\,\,\cdot\,\, & \,\,\cdot\,\, & \,\,\cdot\,\, & \,\,\cdot\,\, & \,\,\cdot\,\, & 0.13 & \,\,\cdot\,\, & \,\,\cdot\,\, & \,\,\cdot\,\, & \,\,\cdot\,\, & \,\,\cdot\,\, & 0.20 & \,\,\cdot\,\, & 0.35 \\ 
\,\,\cdot\,\, & \,\,\cdot\,\, & \,\,\cdot\,\, & \,\,\cdot\,\, & \,\,\cdot\,\, & \,\,\cdot\,\, & \,\,\cdot\,\, & \,\,\cdot\,\, & 0.27 & \,\,\cdot\,\, & \,\,\cdot\,\, & \,\,\cdot\,\, & 0.35 & \,\,\cdot\,\, \\ 
\end{smallmatrix}\right ] 
	
\end{align*}
In the above matrix, the symbol `$\cdot$' denotes an absence of interconnection, corresponding to an entry with numerical value $0$.
In particular, the network represented by $A$ has a total of
$\sval{res_ieeebus}{hst.card_A0}$
edges out of  
$\sval{res_ieeebus}{hst.num_A0}$
possible, resulting in a 
density of $0.204$ nonzero entries.

\begin{figure*}[t]
	\centering
	\includegraphics
	[height=0.28\textheight]
	{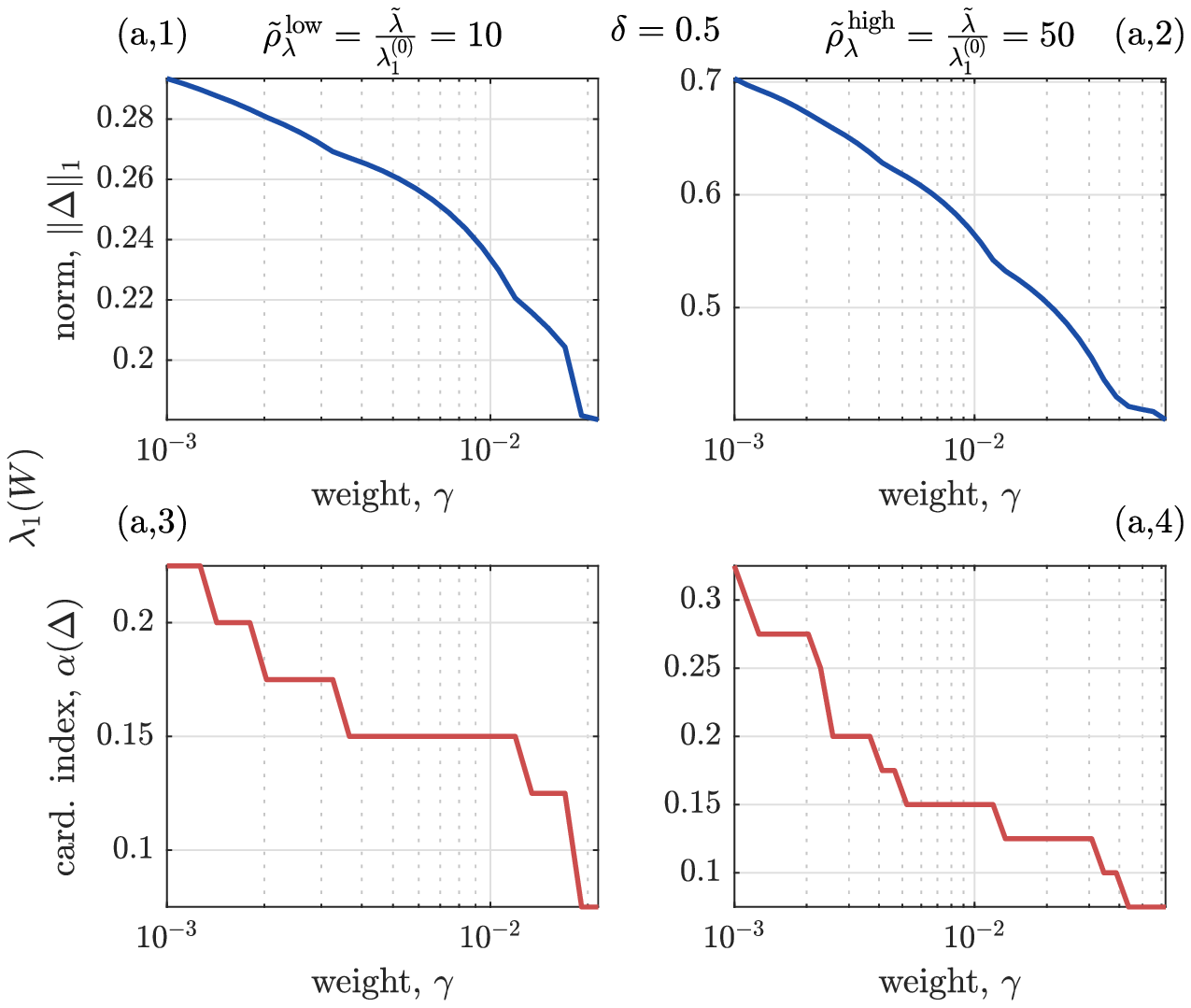}
	\includegraphics
	[height=0.28\textheight]
	{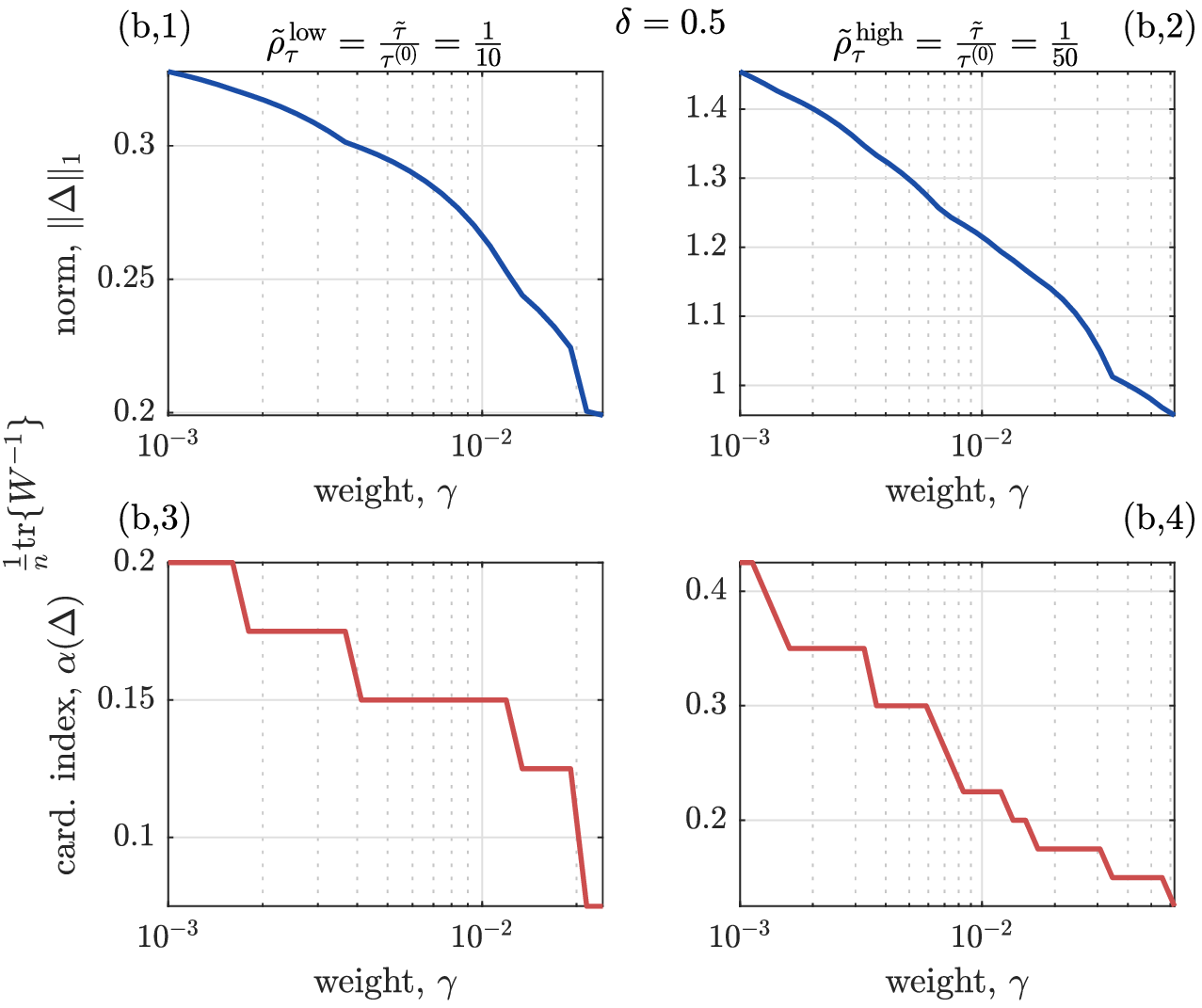}
	\caption{
		Reachability design with induced sparsity for the \ieeebus.
		In (a,$\cdot$), we consider the \impMinLam{} problem, while in (b,$\cdot$) we present results for the \impTrInv{} problem.
		For the first case, (a,1) and (a,2) present the 1-norm of the matrix~$\Delta$ as a function of the penalization weight~$\gamma$, considering low and high target reachability improvement values, respectively. Correspondingly, (a,3) and (a,4) display the cardinality index~$\alpha(\Delta)$ for the same system realizations when low and high improvement targets are considered.
		Likewise, in (b,1) and (b,2)  (resp. (b,3) and (b,4)), we display the 1-norm  (resp.  cardinality index) for low and high reachability improvement target values, when the \impTrInv{} problem is considered.
		\ressub{
			In terms of the simplified power system network analyzed, the decrease in the cardinality index value for increasing values of $\gamma$ seen in panels (a,3), (a,4), (b,3), and (b,4) means that a decreasing number of high voltage direct current (HVDC) links would need to be deployed in order for the system to achieve the desired controllability metrics (i.e., minimal worst-case and average energy required at the control inputs).
			}
		}
	\label{fig:results_ieeebus}
\end{figure*}

In a similar fashion to the previous experiment, we consider two types of design:
\emph{(i)} \impMinLam{}, associated with the minimum eigenvalue~$\lambda_1(W)$,  and
\emph{(ii)} \impTrInv{}, associated with~$\tau = \frac{1}{n}\tr\{ W^{-1}\}$.
For each objective, we explore two cases: one with a low target improvement value, and one with a high target improvement value. For case of \impMinLam{}, we define the ratio of improvement~$\ratio_\lambda = \tar \lambda_1 /  \lambda_1$ 
and set target values 
$\tar \ratio_\lambda^{\text{\,low}} = \sval{res_erdos}{par.s.lam_min_gains(1)}$
and 
$\tar \ratio_\lambda^{\text{\,high}} = \sval{res_erdos}{par.s.lam_min_gains(2)}$.
For the case of \impTrInv{}, we define the ratio of improvement~$\ratio_\tau = \tar \tau / \tau$ and set target values 
$\tar \ratio_\tau^{\text{\,low}} = \frac{1}{\sval{res_erdos}{par.s.tr_inv_gains_rec(1)}}$
and 
$\tar \ratio_\tau^{\text{\,high}} =  \frac{1}{\sval{res_erdos}{par.s.tr_inv_gains_rec(2)}}$.

To evaluate the effect of the sparsity inducing penalty, we define the \emph{cardinality index} $\alpha(\Delta ) \coloneqq \| \Delta \|_0 / \| A \|_0$, which aims at computing the density of nonzero entries of $\Delta$ in terms of the available system entries, as induced by the sparsity pattern of the original system matrix~$A$.
We solve \ref{pbm:ptwodc} using Algorithm~\ref{alg:alg2} for $\sval{res_ieeebus}{par.s.num_gam}$ different values of the penalization parameter $\gamma$, whose logarithm values are set to be uniformly spaced in the pre-specified interval
$\log_{10} \gamma \in [\sval{res_ieeebus}{par.s.min_gam},\sval{res_ieeebus}{par.s.max_gam}]$. 
\ressub{
	In practice, this range just needs to be chosen wide enough such that its
	lower limit allows $\Ptwodc$ to be solved within the prescribed tolerance, and, conversely, its upper limit causes $\Ptwodc$ not to be solved (i.e, the $\mathrm{stop}_K$ function  returns $\textsc{true}$ at some iteration~$k^\star$).
}
In particular, Algorithm~\ref{alg:alg2} is set to stop at iteration $k^\star$ if $\tnn{Z^{(k)}}{2n} \geq  \tnn{Z^{(k-1)}}{2n}$ holds for 
$K =  \sval{res_ieeebus}{par.m.max_no_decrease}$ successive iterations preceding $k^\star$.
The results from the execution of the algorithm are presented in Figure~\ref{fig:results_ieeebus}. We notice the decrease of the penalty term $\| \Delta\|_1$ associated with a decrease in the cardinality index $\alpha(\Delta)$, for all the four cases studied.
		\ressub{
	The total number of iterations (i.e., convex subproblems solved) for the worst-case controllability metric was of $47$ and $61$ for the low and high improvement ratios, respectively. Likewise, the total number of iterations for the average controllability metric was of $49$ and $60$, respectively, for the low and high improvement ratios.
}
Further, for concreteness, we display the specific values of $\Delta$ for the initial and final values of the penalization weight~$\gamma$, considering the scenario where we seek the \impTrInv{} with a high target value of improvement  
$\tar \ratio_\tau^{\text{\,high}} = \sval{res_erdos}{par.s.lam_min_gains(2)}$ (c.f. panel (h) in  Figure~\ref{fig:results_ieeebus}).
%
The entries of the perturbation matrix obtained for the initial value of the penalization parameter~$\gI = \sval{res_ieeebus}{hst.gammas.first}$ were
\begin{align*}
&\;\,\Delta(\gI) = \\
&\left [ \begin{smallmatrix}
\,\,\cdot\,\, & 0.05 & \,\,\cdot\,\, & \,\,\cdot\,\, & 0.22 & \,\,\cdot\,\, & \,\,\cdot\,\, & \,\,\cdot\,\, & \,\,\cdot\,\, & \,\,\cdot\,\, & \,\,\cdot\,\, & \,\,\cdot\,\, & \,\,\cdot\,\, & \,\,\cdot\,\, \\ 
-0.32 & \,\,\cdot\,\, & 0.24 & 0.03 & 0.02 & \,\,\cdot\,\, & \,\,\cdot\,\, & \,\,\cdot\,\, & \,\,\cdot\,\, & \,\,\cdot\,\, & \,\,\cdot\,\, & \,\,\cdot\,\, & \,\,\cdot\,\, & \,\,\cdot\,\, \\ 
\,\,\cdot\,\, & 0.34 & \,\,\cdot\,\, & 0.06 & \,\,\cdot\,\, & \,\,\cdot\,\, & \,\,\cdot\,\, & \,\,\cdot\,\, & \,\,\cdot\,\, & \,\,\cdot\,\, & \,\,\cdot\,\, & \,\,\cdot\,\, & \,\,\cdot\,\, & \,\,\cdot\,\, \\ 
\,\,\cdot\,\, & -0.01 & \,\,*\,\, & \,\,\cdot\,\, & -0.00 & \,\,\cdot\,\, & \,\,*\,\, & \,\,\cdot\,\, & \,\,*\,\, & \,\,\cdot\,\, & \,\,\cdot\,\, & \,\,\cdot\,\, & \,\,\cdot\,\, & \,\,\cdot\,\, \\ 
-0.00 & -0.02 & \,\,\cdot\,\, & -0.03 & \,\,\cdot\,\, & \,\,*\,\, & \,\,\cdot\,\, & \,\,\cdot\,\, & \,\,\cdot\,\, & \,\,\cdot\,\, & \,\,\cdot\,\, & \,\,\cdot\,\, & \,\,\cdot\,\, & \,\,\cdot\,\, \\ 
\,\,\cdot\,\, & \,\,\cdot\,\, & \,\,\cdot\,\, & \,\,\cdot\,\, & -0.03 & \,\,\cdot\,\, & \,\,\cdot\,\, & \,\,\cdot\,\, & \,\,\cdot\,\, & \,\,\cdot\,\, & \,\,*\,\, & \,\,*\,\, & \,\,*\,\, & \,\,\cdot\,\, \\ 
\,\,\cdot\,\, & \,\,\cdot\,\, & \,\,\cdot\,\, & -0.02 & \,\,\cdot\,\, & \,\,\cdot\,\, & \,\,\cdot\,\, & \,\,*\,\, & \,\,*\,\, & \,\,\cdot\,\, & \,\,\cdot\,\, & \,\,\cdot\,\, & \,\,\cdot\,\, & \,\,\cdot\,\, \\ 
\,\,\cdot\,\, & \,\,\cdot\,\, & \,\,\cdot\,\, & \,\,\cdot\,\, & \,\,\cdot\,\, & \,\,\cdot\,\, & \,\,*\,\, & \,\,\cdot\,\, & \,\,\cdot\,\, & \,\,\cdot\,\, & \,\,\cdot\,\, & \,\,\cdot\,\, & \,\,\cdot\,\, & \,\,\cdot\,\, \\ 
\,\,\cdot\,\, & \,\,\cdot\,\, & \,\,\cdot\,\, & -0.05 & \,\,\cdot\,\, & \,\,\cdot\,\, & \,\,*\,\, & \,\,\cdot\,\, & \,\,\cdot\,\, & \,\,*\,\, & \,\,\cdot\,\, & \,\,\cdot\,\, & \,\,\cdot\,\, & 0.01 \\ 
\,\,\cdot\,\, & \,\,\cdot\,\, & \,\,\cdot\,\, & \,\,\cdot\,\, & \,\,\cdot\,\, & \,\,\cdot\,\, & \,\,\cdot\,\, & \,\,\cdot\,\, & \,\,*\,\, & \,\,\cdot\,\, & \,\,*\,\, & \,\,\cdot\,\, & \,\,\cdot\,\, & \,\,\cdot\,\, \\ 
\,\,\cdot\,\, & \,\,\cdot\,\, & \,\,\cdot\,\, & \,\,\cdot\,\, & \,\,\cdot\,\, & \,\,*\,\, & \,\,\cdot\,\, & \,\,\cdot\,\, & \,\,\cdot\,\, & \,\,*\,\, & \,\,\cdot\,\, & \,\,\cdot\,\, & \,\,\cdot\,\, & \,\,\cdot\,\, \\ 
\,\,\cdot\,\, & \,\,\cdot\,\, & \,\,\cdot\,\, & \,\,\cdot\,\, & \,\,\cdot\,\, & \,\,*\,\, & \,\,\cdot\,\, & \,\,\cdot\,\, & \,\,\cdot\,\, & \,\,\cdot\,\, & \,\,\cdot\,\, & \,\,\cdot\,\, & \,\,*\,\, & \,\,\cdot\,\, \\ 
\,\,\cdot\,\, & \,\,\cdot\,\, & \,\,\cdot\,\, & \,\,\cdot\,\, & \,\,\cdot\,\, & \,\,*\,\, & \,\,\cdot\,\, & \,\,\cdot\,\, & \,\,\cdot\,\, & \,\,\cdot\,\, & \,\,\cdot\,\, & \,\,*\,\, & \,\,\cdot\,\, & \,\,*\,\, \\ 
\,\,\cdot\,\, & \,\,\cdot\,\, & \,\,\cdot\,\, & \,\,\cdot\,\, & \,\,\cdot\,\, & \,\,\cdot\,\, & \,\,\cdot\,\, & \,\,\cdot\,\, & \,\,*\,\, & \,\,\cdot\,\, & \,\,\cdot\,\, & \,\,\cdot\,\, & \,\,*\,\, & \,\,\cdot\,\, \\ 
\end{smallmatrix}\right ] 
.
\end{align*}
Here, the symbol `$\ast$' means that the specific entry had a value approximately zero (i.e., within a tolerance $\epsilon_\text{s} =  \sval{res_ieeebus}{par.m.tol_sparsity}$), even though the original network topology and sparsity constraints allowed a non-zero \perturbation{} value. More specifically,
$\sval{res_ieeebus}{hst.card_first}$
out of 
$\sval{res_ieeebus}{hst.card_A0}$
non-zero possible entries were used.
The algorithm was executed for increasing values of $\gamma$ until the stopping criterion was met, in particular, occurring for $\gL = \sval{res_ieeebus}{hst.gammas.stopped}$. The penalized  values obtained in this case were given by
\begin{align*}
&\;\,\Delta(\gL) = \\
&\left [ \begin{smallmatrix}
\,\,\cdot\,\, & \,\,\circledast\,\, & \,\,\cdot\,\, & \,\,\cdot\,\, & 0.23 & \,\,\cdot\,\, & \,\,\cdot\,\, & \,\,\cdot\,\, & \,\,\cdot\,\, & \,\,\cdot\,\, & \,\,\cdot\,\, & \,\,\cdot\,\, & \,\,\cdot\,\, & \,\,\cdot\,\, \\ 
-0.34 & \,\,\cdot\,\, & \,\,\circledast\,\, & 0.09 & 0.02 & \,\,\cdot\,\, & \,\,\cdot\,\, & \,\,\cdot\,\, & \,\,\cdot\,\, & \,\,\cdot\,\, & \,\,\cdot\,\, & \,\,\cdot\,\, & \,\,\cdot\,\, & \,\,\cdot\,\, \\ 
\,\,\cdot\,\, & 0.28 & \,\,\cdot\,\, & \,\,\circledast\,\, & \,\,\cdot\,\, & \,\,\cdot\,\, & \,\,\cdot\,\, & \,\,\cdot\,\, & \,\,\cdot\,\, & \,\,\cdot\,\, & \,\,\cdot\,\, & \,\,\cdot\,\, & \,\,\cdot\,\, & \,\,\cdot\,\, \\ 
\,\,\cdot\,\, & \,\,\circledast\,\, & \,\,*\,\, & \,\,\cdot\,\, & \,\,\circledast\,\, & \,\,\cdot\,\, & \,\,*\,\, & \,\,\cdot\,\, & \,\,*\,\, & \,\,\cdot\,\, & \,\,\cdot\,\, & \,\,\cdot\,\, & \,\,\cdot\,\, & \,\,\cdot\,\, \\ 
\,\,\circledast\,\, & \,\,\circledast\,\, & \,\,\cdot\,\, & \,\,\circledast\,\, & \,\,\cdot\,\, & \,\,*\,\, & \,\,\cdot\,\, & \,\,\cdot\,\, & \,\,\cdot\,\, & \,\,\cdot\,\, & \,\,\cdot\,\, & \,\,\cdot\,\, & \,\,\cdot\,\, & \,\,\cdot\,\, \\ 
\,\,\cdot\,\, & \,\,\cdot\,\, & \,\,\cdot\,\, & \,\,\cdot\,\, & \,\,\circledast\,\, & \,\,\cdot\,\, & \,\,\cdot\,\, & \,\,\cdot\,\, & \,\,\cdot\,\, & \,\,\cdot\,\, & \,\,*\,\, & \,\,*\,\, & \,\,*\,\, & \,\,\cdot\,\, \\ 
\,\,\cdot\,\, & \,\,\cdot\,\, & \,\,\cdot\,\, & \,\,\circledast\,\, & \,\,\cdot\,\, & \,\,\cdot\,\, & \,\,\cdot\,\, & \,\,*\,\, & \,\,*\,\, & \,\,\cdot\,\, & \,\,\cdot\,\, & \,\,\cdot\,\, & \,\,\cdot\,\, & \,\,\cdot\,\, \\ 
\,\,\cdot\,\, & \,\,\cdot\,\, & \,\,\cdot\,\, & \,\,\cdot\,\, & \,\,\cdot\,\, & \,\,\cdot\,\, & \,\,*\,\, & \,\,\cdot\,\, & \,\,\cdot\,\, & \,\,\cdot\,\, & \,\,\cdot\,\, & \,\,\cdot\,\, & \,\,\cdot\,\, & \,\,\cdot\,\, \\ 
\,\,\cdot\,\, & \,\,\cdot\,\, & \,\,\cdot\,\, & \,\,\circledast\,\, & \,\,\cdot\,\, & \,\,\cdot\,\, & \,\,*\,\, & \,\,\cdot\,\, & \,\,\cdot\,\, & \,\,*\,\, & \,\,\cdot\,\, & \,\,\cdot\,\, & \,\,\cdot\,\, & \,\,\circledast\,\, \\ 
\,\,\cdot\,\, & \,\,\cdot\,\, & \,\,\cdot\,\, & \,\,\cdot\,\, & \,\,\cdot\,\, & \,\,\cdot\,\, & \,\,\cdot\,\, & \,\,\cdot\,\, & \,\,*\,\, & \,\,\cdot\,\, & \,\,*\,\, & \,\,\cdot\,\, & \,\,\cdot\,\, & \,\,\cdot\,\, \\ 
\,\,\cdot\,\, & \,\,\cdot\,\, & \,\,\cdot\,\, & \,\,\cdot\,\, & \,\,\cdot\,\, & \,\,*\,\, & \,\,\cdot\,\, & \,\,\cdot\,\, & \,\,\cdot\,\, & \,\,*\,\, & \,\,\cdot\,\, & \,\,\cdot\,\, & \,\,\cdot\,\, & \,\,\cdot\,\, \\ 
\,\,\cdot\,\, & \,\,\cdot\,\, & \,\,\cdot\,\, & \,\,\cdot\,\, & \,\,\cdot\,\, & \,\,*\,\, & \,\,\cdot\,\, & \,\,\cdot\,\, & \,\,\cdot\,\, & \,\,\cdot\,\, & \,\,\cdot\,\, & \,\,\cdot\,\, & \,\,*\,\, & \,\,\cdot\,\, \\ 
\,\,\cdot\,\, & \,\,\cdot\,\, & \,\,\cdot\,\, & \,\,\cdot\,\, & \,\,\cdot\,\, & \,\,*\,\, & \,\,\cdot\,\, & \,\,\cdot\,\, & \,\,\cdot\,\, & \,\,\cdot\,\, & \,\,\cdot\,\, & \,\,*\,\, & \,\,\cdot\,\, & \,\,*\,\, \\ 
\,\,\cdot\,\, & \,\,\cdot\,\, & \,\,\cdot\,\, & \,\,\cdot\,\, & \,\,\cdot\,\, & \,\,\cdot\,\, & \,\,\cdot\,\, & \,\,\cdot\,\, & \,\,*\,\, & \,\,\cdot\,\, & \,\,\cdot\,\, & \,\,\cdot\,\, & \,\,*\,\, & \,\,\cdot\,\, \\ 
\end{smallmatrix}\right ] 
.
\end{align*}
Here, the symbol `$\circledast$' indicates that the corresponding entry resulted in an approximately zero value (i.e., within a tolerance 
$\epsilon_\text{s} =  \sval{res_ieeebus}{par.m.tol_sparsity}$) for this value of $\gL$, whereas the same entry took a nonzero value when the penalization weight $\gI$ was considered. In particular, 
while $\sval{res_ieeebus}{hst.card_first}$
nonzero entries were used for 
$\gI$, 
this number was reduced to
$\sval{res_ieeebus}{hst.card_last}$ for
$\gL$, as a result of effect of the structural penalty.

\section{Conclusion}

In this paper, we have formulated and solved two problems involving the tuning of edge weights in a given discrete-time networked dynamical system such that certain reachability requirements, defined in terms of the reachability Gramian, are satisfied. In our first problem, we aimed at finding a feasible tuning of the edge weights. A direct formulation of this problems results in highly nonlinear optimization program. In order to overcome this challenge, we proposed a chain of transformations allowing us to reformulate this problem as an optimization program involving a rank constraint over a structured matrix presenting an affine dependence on the decision variables. We then relax this rank constraint using a truncated nuclear norm and proposed a sequence of convex programs to solve this relaxation. Furthermore, we have also considered a second problem in which we aimed at finding edge-weights in order to satisfy certain reachability requirements while tuning a small number of edges. Our computational approach to solve these problems has been illustrated with several numerical experiments. As future work, we plan to examine a more comprehensive class of systems, including bilinear and stochastic systems, through their corresponding reachability Gramians. \resub{Another interesting avenue of investigation would be to provide insights on the graph-theoretic characteristics of optimal designs produced for different network topologies. }


%

\appendices


\bibliographystyle{IEEEtran}
\bibliography{controllability_design}

\begin{thebibliography}{10}
\providecommand{\url}[1]{#1}
\csname url@samestyle\endcsname
\providecommand{\newblock}{\relax}
\providecommand{\bibinfo}[2]{#2}
\providecommand{\BIBentrySTDinterwordspacing}{\spaceskip=0pt\relax}
\providecommand{\BIBentryALTinterwordstretchfactor}{4}
\providecommand{\BIBentryALTinterwordspacing}{\spaceskip=\fontdimen2\font plus
\BIBentryALTinterwordstretchfactor\fontdimen3\font minus
  \fontdimen4\font\relax}
\providecommand{\BIBforeignlanguage}[2]{{%
\expandafter\ifx\csname l@#1\endcsname\relax
\typeout{** WARNING: IEEEtran.bst: No hyphenation pattern has been}%
\typeout{** loaded for the language `#1'. Using the pattern for}%
\typeout{** the default language instead.}%
\else
\language=\csname l@#1\endcsname
\fi
#2}}
\providecommand{\BIBdecl}{\relax}
\BIBdecl

\bibitem{FB-LNS}
\BIBentryALTinterwordspacing
F.~Bullo, \emph{Lectures on Network Systems}, 1st~ed.\hskip 1em plus 0.5em
  minus 0.4em\relax CreateSpace, 2018, with contributions by J. Cortes, F.
  Dorfler, and S. Martinez. [Online]. Available:
  \url{http://motion.me.ucsb.edu/book-lns}
\BIBentrySTDinterwordspacing

\bibitem{hespanha2009linear}
J.~Hespanha, \emph{Linear {S}ystems {T}heory}.\hskip 1em plus 0.5em minus
  0.4em\relax Princeton {U}niversity {P}ress, 2009.

\bibitem{clark2012}
A.~Clark, L.~Bushnell, and R.~Poovendran, ``On {L}eader {S}election for
  {P}erformance and {C}ontrollability in {M}ulti-agent {S}ystems,'' in
  \emph{Proceedings of the 51st Annual Conference on Decision and
  Control}.\hskip 1em plus 0.5em minus 0.4em\relax IEEE, Dec 2012, pp. 86--93.

\bibitem{6580798}
A.~Chapman and M.~Mesbahi, ``On {S}trong {S}tructural {C}ontrollability of
  {N}etworked {S}ystems: A {C}onstrained {M}atching {A}pproach,'' in
  \emph{Proceedings of the 52nd American Control Conference}.\hskip 1em plus
  0.5em minus 0.4em\relax IEEE, 2013, pp. 6126--6131.

\bibitem{summers2016actuator}
T.~Summers, ``Actuator {P}lacement in {N}etworks {U}sing {O}ptimal {C}ontrol
  {P}erformance {M}etrics,'' in \emph{Proceedings of the 55th Annual Conference
  on Decision and Control}.\hskip 1em plus 0.5em minus 0.4em\relax IEEE, 2016,
  pp. 2703--2708.

\bibitem{summers2014submodularity}
T.~H. Summers, F.~L. Cortesi, and J.~Lygeros, ``On {S}ubmodularity and
  {C}ontrollability in {C}omplex {D}ynamical {N}etworks,'' \emph{IEEE
  Transactions on Control of Network Systems}, vol.~3, no.~1, pp. 91--101,
  2016.

\bibitem{pequito2017robust}
S.~Pequito, G.~Ramos, S.~Kar, A.~P. Aguiar, and J.~Ramos, ``The {R}obust
  {M}inimal {C}ontrollability {P}roblem,'' \emph{Automatica}, vol.~82, pp.
  261--268, 2017.

\bibitem{olshevsky2014minimal}
A.~Olshevsky, ``Minimal {C}ontrollability {P}roblems,'' \emph{IEEE Transactions
  on Control of Network Systems}, vol.~1, no.~3, pp. 249--258, 2014.

\bibitem{PequitoJournal}
S.~Pequito, S.~Kar, and A.~P. Aguiar, ``A {F}ramework for {S}tructural
  {I}nput/{O}utput and {C}ontrol {C}onfiguration {S}election in {L}arge-scale
  {S}ystems,'' \emph{IEEE Transactions on Automatic Control}, vol.~61, no.~2,
  pp. 303--318, 2016.

\bibitem{tzoumas2016minimal}
V.~Tzoumas, M.~A. Rahimian, G.~J. Pappas, and A.~Jadbabaie, ``Minimal
  {A}ctuator {P}lacement with {B}ounds on {C}ontrol {E}ffort,'' \emph{IEEE
  Transactions on Control of Network Systems}, vol.~3, no.~1, pp. 67--78, 2016.

\bibitem{pequito2016minimum}
S.~Pequito, S.~Kar, and A.~P. Aguiar, ``Minimum {C}ost {I}nput/{O}utput
  {D}esign for {L}arge-scale {L}inear {S}tructural {S}ystems,''
  \emph{Automatica}, vol.~68, pp. 384--391, 2016.

\bibitem{enyioha2014controllability}
C.~Enyioha, M.~A. Rahimian, G.~J. Pappas, and A.~Jadbabaie, ``Controllability
  and {F}raction of {L}eaders in {I}nfinite {N}etworks,'' in \emph{Proceedings
  of the 53rd Annual Conference on Decision and Control}.\hskip 1em plus 0.5em
  minus 0.4em\relax IEEE, 2014, pp. 1359--1364.

\bibitem{ilic2000dynamics}
M.~D. Ilic and J.~Zaborszky, \emph{Dynamics and {C}ontrol of {L}arge {E}lectric
  {P}ower {S}ystems}.\hskip 1em plus 0.5em minus 0.4em\relax Wiley New York,
  2000.

\bibitem{zhang2012flexible}
X.-P. Zhang, C.~Rehtanz, and B.~Pal, \emph{Flexible AC {T}ransmission
  {S}ystems: {M}odelling and {C}ontrol}.\hskip 1em plus 0.5em minus 0.4em\relax
  Springer Science \& Business Media, 2012.

\bibitem{xiao2004fast}
L.~Xiao and S.~Boyd, ``Fast {L}inear {I}terations for {D}istributed
  {A}veraging,'' \emph{Systems \& Control Letters}, vol.~53, no.~1, pp. 65--78,
  2004.

\bibitem{pasqualetti2014controllability}
F.~Pasqualetti, S.~Zampieri, and F.~Bullo, ``Controllability {M}etrics,
  {L}imitations and {A}lgorithms for {C}omplex {N}etworks,'' \emph{IEEE
  Transactions on Control of Network Systems}, vol.~1, no.~1, pp. 40--52, 2014.

\bibitem{bianchin2015role}
G.~Bianchin, F.~Pasqualetti, and S.~Zampieri, ``The {R}ole of {D}iameter in the
  {C}ontrollability of {C}omplex {N}etworks,'' in \emph{Proceedings of the 54th
  Annual Conference on Decision and Control}.\hskip 1em plus 0.5em minus
  0.4em\relax IEEE, 2015, pp. 980--985.

\bibitem{aguilar2016almost}
C.~O. Aguilar and B.~Gharesifard, ``On {A}lmost {E}quitable {P}artitions and
  {N}etwork {C}ontrollability,'' in \emph{Proceedings of the 55th American
  Control Conference}.\hskip 1em plus 0.5em minus 0.4em\relax IEEE, 2016, pp.
  179--184.

\bibitem{aguilar2015graph}
------, ``Graph {C}ontrollability {C}lasses for the {L}aplacian
  {L}eader-follower {D}ynamics,'' \emph{IEEE Transactions on Automatic
  Control}, vol.~60, no.~6, pp. 1611--1623, 2015.

\bibitem{parlangeli2012reachability}
G.~Parlangeli and G.~Notarstefano, ``On the {R}eachability and {O}bservability
  of {P}ath and {C}ycle {G}raphs,'' \emph{IEEE Transactions on Automatic
  Control}, vol.~57, no.~3, pp. 743--748, 2012.

\bibitem{notarstefano2013controllability}
G.~Notarstefano and G.~Parlangeli, ``Controllability and {O}bservability of
  {G}rid {G}raphs via {R}eduction and {S}ymmetries,'' \emph{IEEE Transactions
  on Automatic Control}, vol.~58, no.~7, pp. 1719--1731, 2013.

\bibitem{chapman2014controllability}
A.~Chapman, M.~Nabi-Abdolyousefi, and M.~Mesbahi, ``Controllability and
  {O}bservability of {N}etwork-of-networks via {C}artesian {P}roducts,''
  \emph{IEEE Transactions on Automatic Control}, vol.~59, no.~10, pp.
  2668--2679, 2014.

\bibitem{tanner2004controllability}
H.~G. Tanner, ``On the {C}ontrollability of {N}earest {N}eighbor
  {I}nterconnections,'' in \emph{Proceedings of the 43rd Annual Conference on
  Decision and Control}, vol.~3.\hskip 1em plus 0.5em minus 0.4em\relax IEEE,
  2004, pp. 2467--2472.

\bibitem{roy2019controllability}
S.~Roy and M.~Xue, ``Controllability-gramian {S}ubmatrices for a {N}etwork
  {C}onsensus {M}odel,'' \emph{arXiv preprint arXiv:1903.09125}, 2019.

\bibitem{zhao2017discrete}
S.~Zhao and F.~Pasqualetti, ``Discrete-time {D}ynamical {N}etworks with
  {D}iagonal {C}ontrollability {G}ramian,'' \emph{IFAC-PapersOnLine}, vol.~50,
  no.~1, pp. 8297--8302, 2017.

\bibitem{zhao2017gramian}
Y.~Zhao and J.~Cort{\'e}s, ``Gramian-based {R}eachability {M}etrics for
  {B}ilinear {N}etworks,'' \emph{IEEE Transactions on Control of Network
  Systems}, vol.~4, no.~3, pp. 620--631, 2017.

\bibitem{bianchin2016observability}
G.~Bianchin, P.~Frasca, A.~Gasparri, and F.~Pasqualetti, ``The {O}bservability
  {R}adius of {N}etworks,'' \emph{IEEE Transactions on Automatic Control},
  2016.

\bibitem{siami2018growing}
M.~Siami and N.~Motee, ``Growing {L}inear {D}ynamical {N}etworks {E}ndowed by
  {S}pectral {S}ystemic {P}erformance {M}easures,'' \emph{IEEE Transactions on
  Automatic Control}, vol.~63, no.~7, 2018.

\bibitem{shafi2011graph}
S.~Y. Shafi, M.~Arcak, and L.~El~Ghaoui, ``Graph {W}eight {A}llocation to
  {M}eet {L}aplacian {S}pectral {C}onstraints,'' \emph{IEEE Transactions on
  Automatic Control}, vol.~57, no.~7, pp. 1872--1877, 2011.

\bibitem{torres2018dominant}
J.~A. Torres and S.~Roy, ``Dominant {E}igenvalue {M}inimization with {T}race
  {P}reserving {D}iagonal {P}erturbation: {S}ubset {D}esign {P}roblem,''
  \emph{Automatica}, vol.~89, pp. 160--168, 2018.

\bibitem{preciado2016distributed}
V.~M. Preciado and M.~M. Zavlanos, ``Distributed {N}etwork {D}esign for
  {L}aplacian {E}igenvalue {P}lacement,'' \emph{IEEE Transactions on Control of
  Network Systems}, vol.~4, no.~3, pp. 598--609, 2016.

\bibitem{sun2018weighted}
C.~Sun, R.~Dai, and M.~Mesbahi, ``Weighted {N}etwork {D}esign with
  {C}ardinality {C}onstraints via {A}lternating {D}irection {M}ethod of
  {M}ultipliers,'' \emph{IEEE Transactions on Control of Network Systems},
  vol.~5, no.~4, pp. 2073--2084, 2018.

\bibitem{hassan2017topology}
S.~Hassan-Moghaddam and M.~R. Jovanovi{\'c}, ``Topology {D}esign for
  {S}tochastically {F}orced {C}onsensus {N}etworks,'' \emph{IEEE Transactions
  on Control of Network Systems}, vol.~5, no.~3, pp. 1075--1086, 2017.

\bibitem{preciado2013optimal}
V.~M. Preciado, M.~Zargham, C.~Enyioha, A.~Jadbabaie, and G.~Pappas, ``Optimal
  {V}accine {A}llocation to {C}ontrol {E}pidemic {O}utbreaks in {A}rbitrary
  {N}etworks,'' in \emph{Proceedings of the 52nd Annual Conference on Decision
  and Control}.\hskip 1em plus 0.5em minus 0.4em\relax IEEE, 2013, pp.
  7486--7491.

\bibitem{preciado2014optimal}
V.~M. Preciado, M.~Zargham, C.~Enyioha, A.~Jadbabaie, and G.~J. Pappas,
  ``Optimal {R}esource {A}llocation for {N}etwork {P}rotection {A}gainst
  {S}preading {P}rocesses,'' \emph{IEEE Transactions on Control of Network
  Systems}, vol.~1, no.~1, pp. 99--108, 2014.

\bibitem{pajic2011wireless}
M.~Pajic, S.~Sundaram, G.~J. Pappas, and R.~Mangharam, ``The {W}ireless
  {C}ontrol {N}etwork: {A} {N}ew {A}pproach for {C}ontrol over {N}etworks,''
  \emph{IEEE Transactions on Automatic Control}, vol.~56, no.~10, pp.
  2305--2318, 2011.

\bibitem{wan2008designing}
Y.~Wan, S.~Roy, and A.~Saberi, ``Designing {S}patially {H}eterogeneous
  {S}trategies for {C}control of {V}irus {S}pread,'' \emph{IET Systems
  Biology}, vol.~2, no.~4, pp. 184--201, 2008.

\bibitem{becker2017network}
C.~O. Becker, S.~Pequito, G.~J. Pappas, and V.~M. Preciado, ``Network {D}esign
  for {C}ontrollability {M}etrics,'' in \emph{Proceedings of the 56th Annual
  Conference on Decision and Control}.\hskip 1em plus 0.5em minus 0.4em\relax
  IEEE, 2017, pp. 4193--4198.

\bibitem{blekherman2012semidefinite}
G.~Blekherman, P.~A. Parrilo, and R.~R. Thomas, \emph{Semidefinite
  {O}ptimization and {C}onvex {A}lgebraic {G}eometry}.\hskip 1em plus 0.5em
  minus 0.4em\relax SIAM, 2012.

\bibitem{muller1972analysis}
P.~M{\"u}ller and H.~Weber, ``Analysis and {O}ptimization of {C}ertain
  {Q}ualities of {C}ontrollability and {O}bservability for {L}inear {D}ynamical
  {S}ystems,'' \emph{Automatica}, vol.~8, no.~3, pp. 237--246, 1972.

\bibitem{donoho2006compressed}
D.~L. Donoho, ``Compressed {S}ensing,'' \emph{IEEE Transactions on Information
  Theory}, vol.~52, no.~4, pp. 1289--1306, 2006.

\bibitem{recht2010guaranteed}
B.~Recht, M.~Fazel, and P.~A. Parrilo, ``Guaranteed {M}inimum-rank {S}olutions
  of {L}inear {M}atrix {E}quations via {N}uclear {N}orm {M}inimization,''
  \emph{SIAM review}, vol.~52, no.~3, pp. 471--501, 2010.

\bibitem{hastie2015statistical}
T.~Hastie, R.~Tibshirani, and M.~Wainwright, \emph{Statistical {L}earning with
  {S}parsity: the {L}asso and {G}eneralizations}.\hskip 1em plus 0.5em minus
  0.4em\relax Chapman and Hall/CRC, 2015.

\bibitem{dorfler2014sparsity}
F.~D{\"o}rfler, M.~R. Jovanovi{\'c}, M.~Chertkov, and F.~Bullo,
  ``Sparsity-{P}romoting {O}ptimal {W}ide-{A}rea {C}ontrol of {P}ower
  {N}etworks,'' \emph{IEEE Transactions on Power Systems}, vol.~29, no.~5, pp.
  2281--2291, 2014.

\bibitem{lin2013design}
F.~Lin, M.~Fardad, and M.~R. Jovanovi{\'c}, ``Design of {O}ptimal {S}parse
  {F}eedback {G}ains via the {A}lternating {D}irection {M}ethod of
  {M}ultipliers,'' \emph{IEEE Transactions on Automatic Control}, vol.~58,
  no.~9, pp. 2426--2431, 2013.

\bibitem{blomberg2014approximate}
N.~Blomberg, C.~R. Rojas, and B.~Wahlberg, ``Approximate {R}egularization
  {P}ath for {N}uclear {N}orm {B}ased {H}2 {M}odel {R}eduction,'' in
  \emph{Proceedings of the 53rd Annual Conference on Decision and
  Control}.\hskip 1em plus 0.5em minus 0.4em\relax IEEE, 2014, pp. 3637--3641.

\bibitem{davison1973properties}
E.~Davison and S.~Wang, ``Properties of {L}inear {T}ime-invariant
  {M}ultivariable {S}ystems {S}ubject to {A}rbitrary {O}utput and {S}tate
  {F}eedback,'' \emph{IEEE Transactions on Automatic Control}, vol.~18, no.~1,
  pp. 24--32, 1973.

\bibitem{shields1976structural}
R.~Shields and J.~Pearson, ``Structural {C}ontrollability of {M}ultiinput
  {L}inear {S}ystems,'' \emph{IEEE Transactions on Automatic control}, vol.~21,
  no.~2, pp. 203--212, 1976.

\bibitem{dion2003generic}
J.-M. Dion, C.~Commault, and J.~Van Der~Woude, ``Generic {P}roperties and
  {C}ontrol of {L}inear {S}tructured {S}ystems: a {S}urvey,''
  \emph{Automatica}, vol.~39, no.~7, pp. 1125--1144, 2003.

\bibitem{menara2018structural}
T.~Menara, D.~Bassett, and F.~Pasqualetti, ``Structural {C}ontrollability of
  {S}ymmetric {N}etworks,'' \emph{IEEE Transactions on Automatic Control},
  2018.

\bibitem{zhang2006schur}
F.~Zhang, \emph{The {S}chur {Co}mplement and {I}ts {A}pplications}.\hskip 1em
  plus 0.5em minus 0.4em\relax Springer Science \& Business Media, 2006,
  vol.~4.

\bibitem{hu2012fast}
Y.~Hu, D.~Zhang, J.~Ye, X.~Li, and X.~He, ``Fast and {A}ccurate {M}atrix
  {C}ompletion via {T}runcated {N}uclear {N}orm {R}egularization,'' \emph{IEEE
  Transactions on Pattern Analysis and Machine Intelligence}, p.~1, 2012.

\bibitem{giesen2012regularization}
J.~Giesen, M.~Jaggi, and S.~Laue, ``Regularization {P}aths with {G}uarantees
  for {C}onvex {S}emidefinite {O}ptimization,'' in \emph{Artificial
  Intelligence and Statistics}, 2012, pp. 432--439.

\bibitem{christie2000power}
R.~Christie, ``Power {S}ystems {T}est {C}ase {A}rchive,'' \emph{Electrical
  Engineering Dept., University of Washington}, 2000.

\bibitem{fuchs2013actuator}
A.~Fuchs and M.~Morari, ``Actuator {P}erformance {E}valuation using {LMI}s for
  {O}ptimal {HVDC} {P}lacement,'' in \emph{Proceedings of the European Control
  Conference}.\hskip 1em plus 0.5em minus 0.4em\relax IEEE, 2013, pp.
  1529--1534.

\bibitem{github2018netdeco}
C.~O. Becker, S.~Pequito, G.~J. Pappas, and V.~M. Preciado, ``Online repository
  with code for paper {N}etwork {D}esign for {C}ontrollability {M}etrics,''
  \url{https://github.com/cassianobecker/netdeco}, 2019.

\bibitem{sousa2018}
\BIBentryALTinterwordspacing
T.~Sousa, T.~Soares, P.~Pinson, F.~Moret, T.~Baroche, and E.~Sorin, ``The
  {P2P}-{IEEE} 14 {B}us {S}ystem {D}ata {S}et,'' Apr. 2018. [Online].
  Available: \url{https://doi.org/10.5281/zenodo.1220935}
\BIBentrySTDinterwordspacing

\bibitem{wonham1985geometric}
W.~M. Wonham, \emph{Linear {M}ultivariable {C}ontrol: {A} {G}eometric
  {A}pproach}.\hskip 1em plus 0.5em minus 0.4em\relax Springer-Verlag New
  York., 1985.

\bibitem{horn1990matrix}
R.~A. Horn, R.~A. Horn, and C.~R. Johnson, \emph{Matrix {A}nalysis}.\hskip 1em
  plus 0.5em minus 0.4em\relax Cambridge {U}niversity {P}ress, 1990.

\bibitem{razaviyayn2013unified}
M.~Razaviyayn, M.~Hong, and Z.-Q. Luo, ``A {U}nified {C}onvergence {A}nalysis
  of {B}lock {S}uccessive {M}inimization {M}ethods for {N}onsmooth
  {O}ptimization,'' \emph{SIAM Journal on Optimization}, vol.~23, no.~2, pp.
  1126--1153, 2013.

\bibitem{calafiore2014optimization}
G.~C. Calafiore and L.~El~Ghaoui, \emph{Optimization {M}odels}.\hskip 1em plus
  0.5em minus 0.4em\relax Cambridge {U}niversity {P}ress, 2014.

\bibitem{watson1992characterization}
G.~A. Watson, ``Characterization of the {S}ubdifferential of {S}ome {M}atrix
  {N}orms,'' \emph{Linear {A}lgebra and its {A}pplications}, vol. 170, pp.
  33--45, 1992.

\end{thebibliography}

\normalsize
\section{Additional Lemmas}
\label{sec:appaffine}

\newcommand{\Akron}{ (A \otimes A  - I_{n^2} )}

\resub{
\begin{lemmaa}(Uniqueness for the Lyapunov equation)
	\label{thm:generality_uniqueness} A solution~$W \in \Ss^n$ to 
	\begin{align}
		AWA^{\tp}-W & =-BB^{\tp} \label{eq:dlyap}
	\end{align}	
	exists and is unique for any matrices~$A \equiv A(\G) \in \R^{n\times n}$ and $B \in \R^{n\times m}$ except for a proper algebraic variety~$\varW \subset \R^{|\E|}$, where $|\E|$ is the number of free entries in $A$. 
\end{lemmaa}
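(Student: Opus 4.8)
The plan is to reduce the statement to the non-vanishing of a single polynomial in the free entries of $A$. First I would vectorize \eqref{eq:dlyap}: using $\vecop(AWA^{\tp}) = (A\otimes A)\vecop(W)$, the equation becomes $\Akron\,\vecop(W) = -\vecop(BB^{\tp})$. Hence, for a \emph{fixed} $A$, a solution $W \in \R^{n\times n}$ exists and is unique for \emph{every} right-hand side (in particular for every $B$) if and only if the matrix $\Akron$ is nonsingular. When this holds, the unique solution corresponding to the symmetric right-hand side $-BB^{\tp}$ is itself symmetric: transposing \eqref{eq:dlyap} shows that $W^{\tp}$ solves it as well, so uniqueness forces $W = W^{\tp} \in \Ss^n$. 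This completes the reduction, and it remains to characterize the set of $A$ for which $\Akron$ is singular.

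Next I would observe that $q(A) \coloneqq \det\Akron$ is a polynomial in the $n^2$ entries of $A$. Restricting to matrices with the sparsity pattern prescribed by $\G$ — that is, setting $[A]_{i,j} = 0$ whenever $(j,i)\notin\E$ and leaving the remaining $|\E|$ entries free — yields a polynomial $p : \R^{|\E|} \to \R$ in those free entries. The exceptional set in the statement is then precisely $\varW = \{\, x \in \R^{|\E|} : p(x) = 0 \,\}$, the zero locus of $p$. To conclude that $\varW$ is a \emph{proper} algebraic variety (hence Lebesgue-null), it suffices to show $p \not\equiv 0$.

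The key — and essentially only — point is therefore to exhibit one admissible $A$ with $\det\Akron \neq 0$. The simplest choice is $A = 0$ (all free entries set to zero), which is always admissible: then $\Akron = -I_{n^2}$, so $p(0) = (-1)^{n^2} \neq 0$; equivalently, $\Akron$ is singular iff $\lambda_i(A)\lambda_j(A) = 1$ for some pair of eigenvalues, which clearly fails for $A = 0$. Consequently $p$ is a nonzero polynomial, its zero set $\varW$ is a proper algebraic variety of dimension at most $|\E|-1$ and thus has zero Lebesgue measure, and for every $A$ whose free entries lie outside $\varW$ the matrix $\Akron$ is invertible, so \eqref{eq:dlyap} admits a unique solution $W \in \Ss^n$ for all $B$. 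The only mild subtlety is the symmetry argument in the first step (ensuring the unique $\R^{n\times n}$-solution actually lands in $\Ss^n$); the non-vanishing step is immediate from the $A = 0$ evaluation.
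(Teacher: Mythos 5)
Your proposal follows essentially the same route as the paper's proof: vectorize the Lyapunov equation via $\vecop(AWA^{\tp})=(A\otimes A)\vecop(W)$, observe that $\det(A\otimes A - I_{n^2})$ is a polynomial in the free entries of $A$, and identify the exceptional set with its zero locus, a proper algebraic variety. In fact your write-up is slightly more complete than the paper's, since you explicitly verify that the polynomial is not identically zero (by evaluating at $A=0$, where the determinant is $(-1)^{n^2}$) and that the unique solution is symmetric (by transposing the equation and invoking uniqueness) --- two points the paper leaves implicit.
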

\begin{proof}
	Existence and uniqueness of a solution~$W \in \Ss^{n}$ to \eqref{eq:dlyap} can determined by examining the result of applying the vectorization
	operator on both sides to get
	\begin{align}
	\Akron\vc(W) & =-\vc(BB^{\tp}),\label{eq:vec_dlyap}
	\end{align}
	where the symbol~$\otimes$ denotes the Kronecker product, and the function~$\vc(\cdot)$ is the vectorization operator. Equation~\eqref{eq:vec_dlyap} will have a unique solution whenever the coefficient matrix~$\Akron$ is nonsingular. 
	Following \cite{menara2018structural}, we let $a_\E \coloneqq \left ( [A]_{i,j} : (j,i) \in \E \right )$ represent an ordered set containing the entries of $A$ in lexicographic order. Next, we define a correspondence between $a_\E$ and a vector~$z \in \R^d,\,d=|\E|$, and notice that 
	$\varphi(z) \coloneqq \det \Akron $ is a polynomial function of the components of $z$. Then, we observe that the set~$\varW \coloneqq \{ z \in \R^d : \varphi(z)  = 0\} $
	defines a proper algebraic variety of $\R^d$~\cite{wonham1985geometric} where the matrix $\Akron$ is singular. 
	Therefore, for any matrix~$A$ having entries from the correspondence between $a_\E$ and $z$ such that $z \in \R^d \setminus \varW$, the matrix~$\Akron$ will be nonsingular, and \eqref{eq:dlyap} will have a unique solution~$\vc(W) = -\Akron^{-1}\cdot \vc(BB^{\tp})$.
\end{proof}	
}

\resub{	
\begin{lemmaa}(Stability from the Lyapunov equation)
	\label{thm:lyap_stability_schur}
	Consider the discrete-time Lyapunov equation~\eqref{eq:dlyap} with a unique solution~$W$. If $W\succ 0$ and the pair~$(A,B)$ is reachable, then the matrix~$A$ is Schur stable.
\end{lemmaa}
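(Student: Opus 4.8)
The plan is to establish Schur stability of $A$ by combining an eigenvector argument on the Lyapunov equation with the Popov--Belevitch--Hautus (PBH) reachability test. First I would fix an arbitrary eigenvalue $\lambda \in \mathbb{C}$ of $A$ together with a corresponding \emph{left} eigenvector $v \in \mathbb{C}^n \setminus \{0\}$, so that $v^{*} A = \lambda v^{*}$ and hence, since $A$ is real, $A^{\tp} v = \bar\lambda v$. Left-multiplying the Lyapunov equation~\eqref{eq:dlyap} by $v^{*}$ and right-multiplying by $v$ collapses it to the scalar identity
\begin{align*}
\bigl(|\lambda|^{2} - 1\bigr)\, v^{*} W v \;=\; - v^{*} B B^{\tp} v \;=\; - \| B^{\tp} v \|_{2}^{2} \;\le\; 0 .
\end{align*}
Because $W \succ 0$ and $v \ne 0$ we have $v^{*} W v > 0$, so this already forces $|\lambda|^{2} \le 1$, i.e. $|\lambda| \le 1$; note that this first step uses only positive definiteness of $W$.

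Next I would rule out the boundary case $|\lambda| = 1$, which is precisely where reachability enters. If $|\lambda| = 1$, the identity above gives $\| B^{\tp} v \|_{2} = 0$, i.e. $v^{*} B = 0$. Combined with $v^{*} A = \lambda v^{*}$ and $v \ne 0$, this contradicts the PBH reachability criterion; equivalently, iterating yields $v^{*} A^{k} B = \lambda^{k} v^{*} B = 0$ for all $k \ge 0$, whence $v^{*} [\,B \mid AB \mid \cdots \mid A^{n-1} B\,] = 0$ with $v \ne 0$, contradicting $\rank[\,B \mid AB \mid \cdots \mid A^{n-1} B\,] = n$, which holds since $(A,B)$ is reachable. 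Therefore every eigenvalue $\lambda$ of $A$ satisfies $|\lambda| < 1$, i.e. $A$ is Schur stable.

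The argument is short and I do not expect a genuine obstacle; the only points needing care are (i) staying consistent with complex conjugation and transposition when moving between $v^{*} A = \lambda v^{*}$ and $A^{\tp} v = \bar\lambda v$ (so that the cross term in the congruence becomes $|\lambda|^{2} v^{*} W v$), and (ii) recognizing that positive definiteness of $W$ alone yields only $|\lambda| \le 1$, so reachability is essential to exclude eigenvalues on the unit circle. Observe also that the uniqueness hypothesis on $W$ in the statement is not needed for this implication; it is carried over from the ambient setting of Lemma~\ref{thm:generality_uniqueness}.
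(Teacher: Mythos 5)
Your proof is correct and follows essentially the same route as the paper's: evaluate the quadratic form of the Lyapunov equation at a left eigenvector to get $(|\lambda|^{2}-1)\,v^{*}Wv=-\|B^{\tp}v\|^{2}$, use $W\succ 0$ to control the sign, and invoke the PBH reachability test to exclude $\|B^{\tp}v\|=0$. The only cosmetic differences are that you split the conclusion into $|\lambda|\le 1$ followed by excluding the unit circle (the paper does it in one step) and that you correctly note the uniqueness hypothesis is not actually needed.
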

\begin{proof}
	The proof is a trivial extension to discrete-time systems of the proof to Theorem~12.5 in~\cite[p.103]{hespanha2009linear}. 
	To begin, we pick a left eigenvector~$v$
	of $A$ such that
	$A^{\tp}v=\lambda v$.
	Then, we compare the quadratic forms for $v$ at both sides
	of (\ref{eq:dlyap}):
	\begin{align}
	v^{\ast}(AWA^{\tp}-W)v & =-v^{\ast}(BB^{\tp})v\nonumber\\
	(|\lambda|^{2}-1)v{}^{\ast}Wv & =-\|B^{\tp}v\|^{2},\label{eq:lyap_quadform_last}
	\end{align}
	where $v^\ast$ denotes the conjugate-transpose of $v$.
	Because we assumed that $W\succ0$, it is the case that $v^{\ast}Wv>0$. 
	Then, since $(A,B)$ is reachable by assumption, from the Popov-Belevitch-Hautus (PBH) test for controllability \cite[c.f. Theorem 12.3, p.101]{hespanha2009linear},
	there is no eigenvector~$v$ of $A^{\tp}$ such that $B^{\tp}v=0$.
	Therefore, we have that 
	$\|B^{\tp}v\|^{2}>0$, which implies $|\lambda|<1$ in \eqref{eq:lyap_quadform_last}. Hence,
	the matrix~$A$ is Schur stable.
\end{proof}
}

\begin{lemmaa}[Trace-inverse as semidefinite constraint] 
	\label{lem:tr_inv}
	The condition $n\tau  - \tr\{\left [W\right ]^{-1}\}\geq 0 $
	for $W \in \Spp^n$ can be formulated as a semidefinite constraint  
	requiring the existence of a variable $P\in\R^{n \times n}$ such that
	\begin{align*}
	n\tau-\tr\{P\}  \geq0 \text{ and }
	\begin{bmatrix} W & I_{n}\\
	I_{n} & P
	\end{bmatrix} & \succeq0.
	\end{align*}
\end{lemmaa}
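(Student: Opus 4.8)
The plan is to prove the equivalence of the trace-inverse condition with the semidefinite system by a standard Schur-complement argument combined with monotonicity of the trace under the positive-semidefinite ordering. First I would recall that for $W \in \Spp^n$, the block matrix inequality
\begin{align*}
\begin{bmatrix} W & I_n \\ I_n & P \end{bmatrix} \succeq 0
\end{align*}
is, by the Schur complement with respect to the (invertible) block $W$, equivalent to $P - I_n W^{-1} I_n \succeq 0$, i.e. $P \succeq W^{-1}$. So the existence of a $P$ satisfying both the block LMI and $n\tau - \tr\{P\} \geq 0$ is equivalent to the existence of a $P$ with $P \succeq W^{-1}$ and $\tr\{P\} \leq n\tau$.

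Next I would close the loop in both directions. For the ``if'' direction, suppose such a $P$ exists; since $P \succeq W^{-1}$ and the trace is monotone with respect to $\succeq$ (if $X \succeq Y$ then $\tr\{X\} \geq \tr\{Y\}$, because $\tr\{X-Y\}\geq 0$ for $X - Y \succeq 0$), we get $n\tau \geq \tr\{P\} \geq \tr\{W^{-1}\}$, which is exactly $n\tau - \tr\{W^{-1}\} \geq 0$. For the ``only if'' direction, suppose $n\tau - \tr\{W^{-1}\} \geq 0$; then simply take $P = W^{-1}$, which satisfies $P \succeq W^{-1}$ with equality (hence the block LMI holds, again by Schur complement) and $\tr\{P\} = \tr\{W^{-1}\} \leq n\tau$. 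This establishes the equivalence.

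There is essentially no hard part here: the only things to be careful about are (i) invoking the Schur complement in the correct form, which requires $W \succ 0$ — this is given since $W \in \Spp^n$ — so the complement is taken with respect to the top-left block; and (ii) noting that the block LMI with $P$ symmetric is what is intended (one may take $P$ to be symmetric without loss of generality, since only its symmetric part enters the LMI and its trace). I would state the proof concisely, first establishing the Schur-complement reformulation of the LMI as $P \succeq W^{-1}$, then using trace monotonicity for one direction and the explicit choice $P = W^{-1}$ for the other.

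\begin{proof}
	Since $W \in \Spp^n$, the block matrix
	\begin{align*}
	\begin{bmatrix} W & I_n \\ I_n & P \end{bmatrix}
	\end{align*}
	is positive semidefinite if and only if its Schur complement with respect to the invertible block $W$ is positive semidefinite, i.e., if and only if $P - I_n W^{-1} I_n = P - W^{-1} \succeq 0$. Hence the existence of $P \in \R^{n\times n}$ satisfying both stated conditions is equivalent to the existence of $P$ with $P \succeq W^{-1}$ and $\tr\{P\} \leq n\tau$.

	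If such a $P$ exists, then $P - W^{-1} \succeq 0$ implies $\tr\{P\} - \tr\{W^{-1}\} = \tr\{P - W^{-1}\} \geq 0$, so $n\tau \geq \tr\{P\} \geq \tr\{W^{-1}\}$, which gives $n\tau - \tr\{W^{-1}\} \geq 0$. Conversely, if $n\tau - \tr\{W^{-1}\} \geq 0$, set $P = W^{-1} \in \Spp^n$. Then $P - W^{-1} = 0 \succeq 0$, so the block LMI holds, and $\tr\{P\} = \tr\{W^{-1}\} \leq n\tau$, so $n\tau - \tr\{P\} \geq 0$. This proves the equivalence.
\end{proof}
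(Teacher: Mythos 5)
Your proof is correct and follows essentially the same route as the paper's: a Schur complement with respect to the $W$ block to reduce the LMI to $P \succeq W^{-1}$, combined with monotonicity of the trace under the semidefinite ordering. The only difference is that you spell out the converse direction explicitly (taking $P = W^{-1}$), which the paper leaves implicit.
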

\begin{proof}
	Note that $P - W^{-1} \succeq 0  \Rightarrow \tr\{P\} -  \tr \{W^{-1}\}\geq 0$. Then, applying the Schur complement on $P - W^{-1} \succeq 0$ yields the relationship in terms of the inverse of $W$.
\end{proof}

\begin{lemmaa}[Von Neumann's Trace Inequality]
	\label{lem:von_neumann}	
	For any $X \in \R^{m \times n}$ and pair $(L,R) \in \{L \in \R^{r \times m}, R \in \R^{r \times n}: LL^\tp = I_r, RR^\tp=I_r\}$, where $1\leq r \leq \min\{m,n\}$, we have
	\begin{align}
	\tr\{LXR^\tp\}\leq \sum_{i=1}^{r} \sigma_i(X).
	\label{eq:von_neumann}
	\end{align}
	Further, consider the singular value decomposition $X = U\Sigma V^\tp$, where $U = [u_1,\ldots,u_m]$ and $V=[v_1,\ldots,v_n]$. Then, \eqref{eq:von_neumann} holds with equality if
	$L=[u_1,\ldots,u_r]^\tp$ and $R=[v_1,\ldots,v_r]^\tp$.
\end{lemmaa}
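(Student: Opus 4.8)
The plan is to reduce the inequality to the case where $X$ is replaced by its matrix of singular values, expand the resulting trace, and then close with an elementary majorization argument; the equality assertion will follow by a direct substitution. Concretely, write $X = U\Sigma V^\tp$ with $U \in \R^{m\times m}$, $V \in \R^{n\times n}$ orthogonal and $\Sigma \in \R^{m\times n}$ carrying $\sigma_1(X) \ge \cdots \ge \sigma_{\min\{m,n\}}(X) \ge 0$ on its diagonal. Setting $\tilde L \coloneqq LU$ and $\tilde R \coloneqq RV$, one checks that $\tilde L \tilde L^\tp = L L^\tp = I_r$ and $\tilde R \tilde R^\tp = R R^\tp = I_r$, while $\tr\{LXR^\tp\} = \tr\{\tilde L \Sigma \tilde R^\tp\}$. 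Hence it suffices to bound $\tr\{\tilde L\Sigma\tilde R^\tp\}$ for arbitrary $\tilde L \in \R^{r\times m}$ and $\tilde R \in \R^{r\times n}$ with orthonormal rows.

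Next I would expand $\tr\{\tilde L\Sigma\tilde R^\tp\} = \sum_{k=1}^{\min\{m,n\}} \sigma_k(X)\,d_k$, where $d_k \coloneqq \sum_{i=1}^r \tilde L_{ik}\tilde R_{ik}$. The elementary bound $2\tilde L_{ik}\tilde R_{ik} \le \tilde L_{ik}^2 + \tilde R_{ik}^2$ gives $d_k \le \tfrac12(a_k + b_k)$, with $a_k \coloneqq \sum_{i=1}^r \tilde L_{ik}^2$ and $b_k \coloneqq \sum_{i=1}^r \tilde R_{ik}^2$ the $k$-th diagonal entries of $\tilde L^\tp \tilde L$ and $\tilde R^\tp \tilde R$, respectively. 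Since $\tilde L^\tp \tilde L$ and $\tilde R^\tp \tilde R$ are orthogonal projections of rank $r$ (symmetric, idempotent because $\tilde L\tilde L^\tp = I_r$, and of rank $r$), their diagonal entries lie in $[0,1]$ and sum to $r$; in particular $e_k \coloneqq \tfrac12(a_k+b_k)$ satisfies $0 \le e_k \le 1$ and $\sum_{k} e_k \le r$.

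It then remains to verify the scalar statement: if $\sigma_1(X) \ge \sigma_2(X) \ge \cdots \ge 0$, $0 \le e_k \le 1$ and $\sum_k e_k \le r$, then $\sum_k \sigma_k(X)\, e_k \le \sum_{i=1}^r \sigma_i(X)$. This follows from
\[
\sum_k \sigma_k(X)e_k - \sum_{i=1}^r \sigma_i(X) = \sum_{k\le r}\sigma_k(X)(e_k - 1) + \sum_{k>r}\sigma_k(X)\,e_k \le \sigma_r(X)\Big(\sum_k e_k - r\Big) \le 0,
\]
where the middle inequality uses $\sigma_k(X) \ge \sigma_r(X)$ together with $e_k - 1 \le 0$ for $k \le r$, and $\sigma_k(X) \le \sigma_r(X)$ together with $e_k \ge 0$ for $k > r$. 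Chaining these bounds establishes \eqref{eq:von_neumann}.

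Finally, for the equality claim I would substitute $L = [u_1,\ldots,u_r]^\tp$ and $R = [v_1,\ldots,v_r]^\tp$: orthonormality of the columns of $U$ and of $V$ gives $LU = [\,I_r \;\; 0\,] \in \R^{r\times m}$ and $RV = [\,I_r \;\; 0\,] \in \R^{r\times n}$, so that $LXR^\tp = [\,I_r \;\; 0\,]\,\Sigma\,[\,I_r \;\; 0\,]^\tp = \diag\!\big(\sigma_1(X),\ldots,\sigma_r(X)\big)$, whose trace is exactly $\sum_{i=1}^r \sigma_i(X)$. I expect the main obstacle to be the bookkeeping in the second step — handling the index ranges carefully when $m \neq n$ and certifying that $(e_k)_k$ lands in the polytope $\{e : 0 \le e_k \le 1,\ \sum_k e_k \le r\}$ — since once that is in hand, the concluding majorization inequality in the third step is a one-line rearrangement.
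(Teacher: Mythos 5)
Your proof is correct and complete. Note, though, that the paper does not actually prove this lemma at all: its ``proof'' is a pointer to Theorem~3.1 of Hu et al.\ and Theorem~7.4.1.1 of Horn and Johnson. What you have written is the standard self-contained argument behind the Ky Fan characterization: reduce to the diagonal case via $\tilde L = LU$, $\tilde R = RV$ (which preserves the row-orthonormality constraints and the trace), bound the coupling coefficients $d_k=\sum_i \tilde L_{ik}\tilde R_{ik}$ by the arithmetic-mean trick $2\tilde L_{ik}\tilde R_{ik}\le \tilde L_{ik}^2+\tilde R_{ik}^2$ so that the weights $e_k$ land in the polytope $\{0\le e_k\le 1,\ \sum_k e_k\le r\}$ (using that $\tilde L^\tp\tilde L$ and $\tilde R^\tp\tilde R$ are rank-$r$ orthogonal projections), and finish with the one-line rearrangement $\sum_k\sigma_k e_k-\sum_{i\le r}\sigma_i\le \sigma_r(\sum_k e_k-r)\le 0$. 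All three steps check out, including the bookkeeping when $m\neq n$ (the diagonal sums of the two projections are $r$ each over ranges of lengths $m$ and $n$, so the truncated sum of the $e_k$ is only bounded by $r$, which is all the majorization step needs), and the equality case by direct substitution $LU=[\,I_r\ 0\,]$, $RV=[\,I_r\ 0\,]$ is immediate. The trade-off is simply that your route is longer but elementary and self-contained, whereas the paper buys brevity by deferring to the literature; your argument could stand in place of the citation without any modification.
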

\begin{proof}
	See Theorem 3.1~\cite{hu2012fast} and Theorem 7.4.1.1~\cite[p. 458]{horn1990matrix}.
\end{proof}

\iftoggle{arxiv}{}{
\iftrue
\begin{IEEEbiography}
	[{\includegraphics[width=1in,height=1.25in,clip,keepaspectratio]{figs/profile/cbecker_bw.jpg}}]
	{Cassiano O. Becker}
	is a Ph.D. student in the Department of Electrical and Systems Engineering at the University of Pennsylvania. He received an M.Sc. in Telecommunications (with distinction) from the University College London, an M.S. in Electrical Engineering from the State University of Campinas and a B.Eng. in Electrical Engineering from the Federal University of Rio Grande do Sul, Brazil. Before starting his doctoral studies, he worked as systems engineer and software developer in technology companies such as Harris Corporation and Siemens. 
	His research interests include inference and control over dynamical systems and complex networks using advanced optimization tools, with applications to cyber-physical systems and neuroscience, among others.
\end{IEEEbiography}

\begin{IEEEbiography}
	[{\includegraphics[width=1in,height=1.25in,clip,keepaspectratio]{figs/profile/spequito2_bw.png}}]
	{S\'{e}rgio Pequito}
	(S’09, M’14) is an assistant professor at the Department of Industrial and Systems Engineering at the Rensselaer Polytechnic Institute. From 2014 to 2017, he was a postdoctoral researcher in General Robotics, Automation, Sensing \& Perception Laboratory (GRASP lab) at University of Pennsylvania. He obtained his Ph.D. in Electrical and Computer Engineering from Carnegie Mellon University and Instituto Superior T\`{e}cnico, through the CMU-Portugal program, in 2014. 
	Pequito’s research consists of understanding the global qualitative behavior of large-scale systems from their structural or parametric descriptions and provide a rigorous framework for the design, analysis, optimization and control of large scale (real-world) systems. 
	Pequito was awarded the best student paper finalist in the 48th IEEE Conference on Decision and Control (2009). Also, Pequito received the ECE Outstanding Teaching Assistant Award from the Electrical and Computer Engineering Department at Carnegie Mellon University, and the Carnegie Mellon Graduate Teaching Award (University-wide) honorable mention, both in 2012. Also, Pequito was a 2016 EECI European Ph.D. Award on Control for Complex and Heterogeneous Systems finalist and received the 2016 O. Hugo Schuck Award in the Theory Category.
\end{IEEEbiography}
\begin{IEEEbiography}
	[{\includegraphics[width=1in,height=1.25in,clip,keepaspectratio]{figs/profile/gpappas_bw.png}}]
	{George J. Pappas} (S’90,M’91,SM’04,F’09) received the Ph.D. degree in electrical engineering and computer sciences from the University of California, Berkeley, CA, USA, in 1998. He is currently the Joseph Moore Professor and Chair of the Department of Electrical and Systems Engineering, University of Pennsylvania, Philadelphia, PA, USA. He also holds a secondary appointment with the Department of Computer and Information Sciences and the Department of Mechanical Engineering and Applied Mechanics. He is a Member of the GRASP
	Lab and the PRECISE Center. He had previously served as the Deputy Dean for Research with the School of Engineering and Applied Science. His research interests include control theory and, in particular, hybrid systems, embedded systems, cyberphysical systems, and hierarchical and distributed control systems, with applications to unmanned aerial vehicles, distributed robotics, green buildings, and biomolecular networks. Dr. Pappas has received various awards, such as the Antonio Ruberti Young Researcher Prize, the George S. Axelby Award, the Hugo Schuck Best Paper Award, the George H. Heilmeier Award, the National Science Foundation PECASE award and numerous best student papers awards at ACC, CDC, and ICCPS.
\end{IEEEbiography}

\begin{IEEEbiography}
	[{\includegraphics[width=1in,height=1.25in,keepaspectratio]{figs/profile/vpreciado_bw.jpg}}]
	{Victor M. Preciado}
	received the PhD degree in Electrical Engineering and Computer Science from the Massachusetts Institute of Technology, in 2008. He is currently an Associate Professor of Electrical and Systems Engineering at the University of Pennsylvania, where he is a member of the Networked and Social Systems Engineering (NETS) program, the Warren Center for Network and Data Sciences, and the Applied Math and Computational Science (AMCS) program. He was a recipient of the 2017 National Science Foundation CAREER Award, the 2018 Outstanding Paper Award by the IEEE Control Systems Magazine, and a runner-up of the 2019 Best Paper Award by the IEEE Transactions on Network Science and Engineering. His main research interests lie at the intersection of big data and network science; in particular, in using innovative mathematical and computational approaches to capture the essence of complex, high-dimensional dynamical systems. Relevant applications of this line of research can be found in the context of socio-technical networks, brain dynamical networks, healthcare operations, biological systems, and critical technological infrastructure. He is a Senior Member of IEEE.
\end{IEEEbiography}
\vfill
\fi
}

	\clearpage
	\newpage
	\newpage
	
\section{\resub{Additional Conditions for Optimality of $\Ponedc$ and $\Ptwodc$}}

\resub{
	The results established in Theorem~\ref{thm:conv} guarantee that Algorithm~\ref{alg:alg1} will converge to a 
	\ressub{limit}
	value in terms of $\tnn{\Z(W^{(k)},H^{(k)},\Delta^{(k)})}{2n}$.	
	However, because of the non-convexity of $\Ponedc$, such a 
	\ressub{limit}
	value does not need to correspond to its optimal value~$\tnn{\Z(W^{(k)},H^{(k)},\Delta^{(k)})}{2n}=0$, attainable when $\Pone$ is feasible. This fact motivates us to seek additional conditions for optimality of $\Ponedc$ by examining \ressub{limit points associated with the limit values attained by} Algorithm~\ref{alg:alg1} in terms of their Karush-Kuhn-Tucker (KKT) conditions. 
	Next, with this intent, we introduce a standardized version for \ref{pbm:ponesub}.
}

\begin{pbm}{$\Ponestd$}[Standard form for \ref{pbm:ponesub}]
	\label{pbm:ponestd}
	\resub{
		This form consists of expressing \ref{pbm:ponesub} in terms of a single unstructured matrix variable~$X\in \R^{4n \times 3n}$, along with affine and semidefinite constraints.	
		Specifically, we introduce the equality constraint $X = \Z(W,H,\Delta)$, along with the reachability constraint~$W \in \spec$ and the structural constraint~$\Delta \in \poly$. Then, we jointly encode these three constraints by an equality constraint~$\A(X)=a_0$ and a semidefinite constraint~$\B(X)\succeq B_0$. Here, 
		$\A: \R^{4n \times 3n}\rightarrow \R^{d_\A}$ and $\B: \R^{4n \times 3n}\rightarrow \Ss^{d_\B}$ are linear operators\footnote{
			\resub{
				The operator~$\A(X): \R^{4n \times 3n} \rightarrow \R^{d_\A}$ can be concretely expressed as $\A(X) = M\vecop(X)$ for some matrix $M \in \R^{d_\A \times 
					4n\cdot 3n}$.
				The operator~$\B(X)$ can be expressed as 
				$\B(X)= \sum_{i=1}^m \sum_{j=1}^n Q_{i,j} [X]_{i,j}$ for symmetric matrices $\{Q_{i,j} \in \Ss^{d_\B} \}_{i,j=1}^{m,n}$.
		}}
		with ${d_\A}$ and ${d_\B}$ depending on specific forms of $\spec$ and $\poly$. 
		Further, we denote the term $\tr \{ L X  R^\tp\}$ by its inner product representation $\inner{C}{X}$, where $C \coloneqq L^\tp R$.%
	}
	Therefore, the standard form representation of \ref{pbm:ponesub} is described as 
	\begin{align}
	\underset{X}{\minimize}&  \quad \|  X \|_\ast -	\inner{ C}{X}\nonumber\\
	\subjecto & \quad \A(X)=\resub{a_0}, \label{eq:con_std_lin}\\
	& \quad \B(X)\succeq \resub{B_0}.\label{eq:con_std_psd}
	\end{align}
\end{pbm}
 
\begin{lemma}[Optimality conditions for \ref{pbm:ponestd}] 
	\label{lem:kkt_cvx}
	Consider a point~$X^\star$ with rank~$q$ 
	and singular value decomposition
	$U \Sigma V^{\tp}=\svd\{X^\star\}$, where
	$\Sigma = \diag(\sigma_1, \ldots, \sigma_{q}, 0,\ldots, 0) $, 
	$U \in \R^{4n \times 3n}$ with $U=\left [U_q | U_y\right]$,
	$U_q = \left [u_1| \ldots| u_q \right ] $ and  $U_y = \left [u_{q+1}| \ldots| u_{3n}\right ]$, and 
	$V \in \R^{3n \times 3n}$ with $V=\left [V_q| V_y\right ]$,
	$V_q =\left  [v_1| \ldots| v_q\right ] $ and $V_y = \left [v_{q+1}| \ldots| v_{3n}\right ]$.	
	Also, consider the following set, associated with the subdifferential of the nuclear norm of $X$ at $X^\star$:
	\begin{align}
	\mathcal Y|_{X^\star}\!\! \coloneqq \!\{Y \!\in \R^{4n \times 3n }\! :U_q^\tp Y = 0, Y  V_q = 0,  \| Y \| \leq 1\}.\label{eq:subdiff_set_nuc}	
	\end{align}
	\resub{Further}, let $\mu \in \R^{d_\A}$ and $\Gamma \in \mathbb S^{d_\B}$, be Lagrange multipliers
	for the constraints associated with operators $\A$ and $\B$, respectively, and define the mapping $G : \R^{d_{\A}} \times \Ss^{d_\B} \rightarrow \R^{4n \times 3n}$ as
	\begin{align*}
	G(\mu, \Gamma) \coloneqq  \A^\ast(\mu)+ \B^\ast(\Gamma).
	\end{align*}
	\resub{Here}, $\A^\ast$ and $\B^\ast$ denote the \resub{adjoint}\footnote{
		\resub{
			An adjoint operator~$\A^\ast(X)$ with respect to an operator~$\A(X)$ and inner product~$\inner{\cdot}{\cdot}$ is such that $\inner{\A(X)}{a_0}=\inner{X}{\A^\ast(a_0)}$.
	}}
	of their respective operators.
	Then, 
	\ressub{for }
	a primal-dual feasible point $X^\star,(\mu^\star, \Gamma^\star)$ 
	 \ressub{to be }
	 optimal for \ref{pbm:ponestd} 
	\ressub{it needs to satisfy}
	 the complementary slackness, and, additionally, the Lagrangian stationary condition
	\begin{align}
	\label{eq:kkt_cvx}
	C &+ G(\mu^\star, \Gamma^\star)  = U_qV_q^\tp + Y
	\end{align}
	for some $Y \in \mathcal Y|_{X^\star} $.
\end{lemma}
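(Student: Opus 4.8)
The plan is to derive the stated Lagrangian stationarity condition by writing down the KKT conditions for the convex problem \ref{pbm:ponestd} in standard form. First I would form the Lagrangian
\[
\mathcal{L}(X,\mu,\Gamma) = \|X\|_\ast - \inner{C}{X} + \inner{\mu}{a_0 - \A(X)} - \inner{\Gamma}{\B(X) - B_0},
\]
with $\Gamma \succeq 0$ the multiplier for the semidefinite constraint \eqref{eq:con_std_psd} and $\mu$ the (free) multiplier for the equality constraint \eqref{eq:con_std_lin}. Since \ref{pbm:ponestd} is convex (the objective is a difference of a norm and a linear term, hence convex; the feasible set is a spectrahedron intersected with an affine subspace), and assuming a constraint qualification such as Slater's condition holds, the KKT conditions are necessary and sufficient for optimality: primal feasibility, dual feasibility $\Gamma^\star \succeq 0$, complementary slackness $\inner{\Gamma^\star}{\B(X^\star) - B_0} = 0$, and stationarity $0 \in \partial_X \mathcal{L}(X^\star,\mu^\star,\Gamma^\star)$.

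Next I would compute the subdifferential of the Lagrangian with respect to $X$. The linear terms contribute $-C - \A^\ast(\mu^\star) - \B^\ast(\Gamma^\star) = -C - G(\mu^\star,\Gamma^\star)$, using the adjoint identities $\inner{\mu}{\A(X)} = \inner{\A^\ast(\mu)}{X}$ and $\inner{\Gamma}{\B(X)} = \inner{\B^\ast(\Gamma)}{X}$. The only nonsmooth term is $\|X\|_\ast$, whose subdifferential at a point $X^\star$ with thin SVD $X^\star = U_q \Sigma_q V_q^\tp$ is the well-known set $\{ U_q V_q^\tp + Y : U_q^\tp Y = 0,\ Y V_q = 0,\ \|Y\| \le 1\}$, which is exactly $\{U_qV_q^\tp + Y : Y \in \mathcal Y|_{X^\star}\}$ as defined in \eqref{eq:subdiff_set_nuc}. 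The stationarity condition $0 \in \partial\|X^\star\|_\ast - C - G(\mu^\star,\Gamma^\star)$ then rearranges to
\[
C + G(\mu^\star,\Gamma^\star) = U_q V_q^\tp + Y
\]
for some $Y \in \mathcal Y|_{X^\star}$, which is precisely \eqref{eq:kkt_cvx}. Together with complementary slackness and primal-dual feasibility, this gives the claimed characterization.

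The main obstacle — or rather the only point requiring care — is the subdifferential formula for the nuclear norm and the bookkeeping of the orthogonality conditions in the block notation $U = [U_q | U_y]$, $V = [V_q | V_y]$; this is a standard result (see, e.g., \cite{watson1992characterization,recht2010guaranteed}) and I would simply cite it. A secondary point is verifying that a constraint qualification holds so that KKT is both necessary and sufficient; since the problem is convex with an affine equality constraint and a spectrahedral constraint, Slater's condition (strict feasibility of $\B(X) \succ B_0$, which is implied by $W \in \spec$ being nonempty in its relative interior, consistent with the feasibility assumption used in Theorem~\ref{thm:conv}) suffices, and I would note this explicitly. No further computation is needed.
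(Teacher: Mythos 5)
Your proposal is correct and follows essentially the same route as the paper's proof: write the KKT stationarity condition for the convex standard-form problem, move the linear terms across via the adjoint identities to obtain $C + \A^\ast(\mu^\star)+\B^\ast(\Gamma^\star) \in \partial\|X^\star\|_\ast$, and then invoke the standard characterization of the nuclear-norm subdifferential (Lemma~\ref{lem:subdiff_nuc}) to arrive at \eqref{eq:kkt_cvx}. Your explicit remarks on the sign bookkeeping in the Lagrangian and on the constraint qualification are slightly more careful than the paper's write-up but do not change the argument.
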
	
\begin{proof}
	Applying the KKT conditions to the convex problem~\ref{pbm:ponestd}, we have that Lagrangian stationarity requires
	{\small
		\begin{align*}
		\nabla_X\left \{\inner{C}{X} + \inner{\mu^\star}{\A(X)-b} + \inner{\Gamma^\star}{\B(X)}\right \}_{X=X^\star} \in \partial \| X^\star\|_\ast,	\end{align*}}%
	where $\partial \| X^\star\|_\ast$ denotes the subdifferential of the nuclear norm at $X^\star$.
	Using the conjugacy property of linear operators and evaluating the gradient of the above equation implies that
	\begin{align*}
	C + \A^\ast(\mu) + \B^\ast(\Gamma) &\in \partial \| X^\star\|_\ast.
	\end{align*}
	Then, using Lemma~\ref{lem:subdiff_nuc} (in the Appendix) for the subdifferential of the nuclear norm, condition~\eqref{eq:kkt_cvx} is obtained. 
\end{proof}

\ressub{
Next, we use the conditions specified above to analyze stationary points of $\Ponedc$, further characterizing such solutions in terms of their optimality.}

\begin{theorem}[\ressub{Optimality conditions 
	for \ref{pbm:ponedc}}]
	\label{thm:stationary}
	%
	\ressub{Consider a primal feasible stationary limit point~$\sta X$ for $\Ponedc$}
	along with its 
	singular value decomposition  
	and subdifferential~$\mathcal Y|_{\sta X}$ \resub{(as defined in Lemma~\ref{lem:kkt_cvx})}.
	Further, consider its \mbox{dual-feasible} point $(\sta \mu, \sta \Gamma)$, for which the corresponding complementary slackness conditions hold. 
	%
	Then, if the Lagrange multipliers~$(\sta \mu, \sta \Gamma)$ are such that 
	\begin{align}
	\label{eq:opt_G}
	G(\sta \mu, \sta \Gamma) \in \mathcal Y_{\sta X},
	\end{align}
	we have that $\sta X, (\sta \mu, \sta \Gamma)$ \ressub{attains} $\tnn{\sta X}{2n}=0$.
\end{theorem}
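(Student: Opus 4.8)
The plan is to show that the certificate~\eqref{eq:opt_G} forces $\rank(\sta X)\leq 2n$, which is exactly the statement $\tnn{\sta X}{2n}=0$; the underlying mechanism is that at a stationary point the linear term of the convex subproblem is itself assembled from the dominant singular vectors of $\sta X$, so the optimality condition of Lemma~\ref{lem:kkt_cvx} becomes self-referential and is compatible with~\eqref{eq:opt_G} only when the rank does not exceed $2n$.

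First I would make the reduction precise. Let $U\Sigma V^\tp=\svd\{\sta X\}$. Since $\sta X$ is a stationary limit point of Algorithm~\ref{alg:alg1}, it is a fixed point of the iteration: the matrices $L=[u_1|\ldots|u_{2n}]^\tp$ and $R=[v_1|\ldots|v_{2n}]^\tp$ formed from $\svd\{\sta X\}$ return $\sta X$ itself as an optimizer of $\mathcal C(L,R;\theta)$, so in the standard form~\ref{pbm:ponestd} the linear coefficient is the partial isometry $C=L^\tp R=\sum_{i=1}^{2n}u_iv_i^\tp$. Because $(\sta\mu,\sta\Gamma)$ is dual feasible and satisfies complementary slackness, $\sta X$ together with $(\sta\mu,\sta\Gamma)$ is primal--dual optimal for~\ref{pbm:ponestd}, so Lemma~\ref{lem:kkt_cvx} applies: with $q\coloneqq\rank(\sta X)$ and $U_q,V_q$ as defined there,
\begin{align*}
C+G(\sta\mu,\sta\Gamma)=U_qV_q^\tp+Y,\qquad Y\in\mathcal Y_{\sta X}.
\end{align*}

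Next I would isolate $G(\sta\mu,\sta\Gamma)$, using that the singular vectors entering $C$ coincide with those entering $U_qV_q^\tp$. If $q\leq 2n$ the conclusion is immediate, since then $\sigma_i(\sta X)=0$ for all $i>2n$. So suppose, for contradiction, that $\tnn{\sta X}{2n}>0$, i.e.\ $q>2n$. Then $U_qV_q^\tp-C=\sum_{i=2n+1}^{q}u_iv_i^\tp$, and therefore
\begin{align*}
G(\sta\mu,\sta\Gamma)=U_qV_q^\tp-C+Y=\Bigl(\textstyle\sum_{i=2n+1}^{q}u_iv_i^\tp\Bigr)+Y.
\end{align*}
By~\eqref{eq:opt_G}, $G(\sta\mu,\sta\Gamma)\in\mathcal Y_{\sta X}$, so in particular $U_q^\tp G(\sta\mu,\sta\Gamma)=0$. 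On the other hand, each $u_i$ with $i\in\{2n+1,\ldots,q\}$ is a column of $U_q$, so $U_q^\tp u_i=e_i$ (the $i$-th column of $I_q$), while $U_q^\tp Y=0$; hence $U_q^\tp G(\sta\mu,\sta\Gamma)=\sum_{i=2n+1}^{q}e_iv_i^\tp$, whose $(2n+1)$-th row equals $v_{2n+1}^\tp\neq 0$. This contradicts $U_q^\tp G(\sta\mu,\sta\Gamma)=0$; hence $q\leq 2n$ and $\tnn{\sta X}{2n}=\sum_{i=2n+1}^{3n}\sigma_i(\sta X)=0$.

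The step I expect to be the main obstacle is the reduction in the second paragraph: one must argue carefully that a \emph{stationary limit point} of the difference-of-convex iteration is a KKT point of~\ref{pbm:ponestd} whose linear coefficient is the \emph{self-consistent} matrix $C=\sum_{i=1}^{2n}u_iv_i^\tp$ built from its own SVD. Attention is needed when $\sta X$ has a repeated singular value whose multiplicity straddles the index $2n$, so that $L$, $R$ (and hence $C$) are not uniquely determined by $\sta X$; in that case one should fix $L,R$ to be the particular choice produced at convergence. Once this is settled, the remainder is the elementary singular-subspace computation above.
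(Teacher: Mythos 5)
Your proposal is correct and follows essentially the same route as the paper's proof: assume $q>2n$, use the fixed-point property of Algorithm~\ref{alg:alg1} to identify $C=\sta L^\tp\sta R=\sum_{i=1}^{2n}u_iv_i^\tp$, cancel these dyads against $U_qV_q^\tp$ in the stationarity condition of Lemma~\ref{lem:kkt_cvx}, and invoke $G(\sta\mu,\sta\Gamma)\in\mathcal Y|_{\sta X}$ to force $\sum_{i=2n+1}^{q}u_iv_i^\tp=0$, a contradiction. Your explicit computation of $U_q^\tp G(\sta\mu,\sta\Gamma)=\sum_{i=2n+1}^{q}e_iv_i^\tp\neq 0$ is a slightly more concrete rendering of the paper's step asserting $Y=G(\sta\mu,\sta\Gamma)$, and your closing caveat about the self-consistency of $C$ at a limit point (and non-uniqueness of $L,R$ under repeated singular values) flags a subtlety the paper also leaves implicit.
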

\begin{proof}	
	The proof is by contradiction. We assume that $q>2n$, implying $\tnn{\sta X}{2n}>0$.
	A limit
	point~$\sta X$ will satisfy $X^{(k+1)}=X^{(k)}=\sta X$ for $k \rightarrow \infty$. Considering the updates performed by Algorithm~\ref{alg:alg1}, we have $L^{(k+1)}=L^{(k)}=\sta L$ and $R^{(k+1)}=R^{(k)}=\sta R$ such that
	$ \sta  L = [\sta  u_1,\ldots, \sta  u_{2n}]^\tp$, and $\sta R = [\sta  v_1,\ldots, \sta  v_{2n}]^\tp$.
	From Lemma~\ref{lem:kkt_cvx}, the Lagrange stationarity condition for $\sta  X$ requires
	\begin{align}
	\label{eq:kkt_stat}
	G(\sta \mu, \sta \Gamma)  =Y +  \sta U_q \sta V_q^\tp - \sta L^\tp \sta R.
	\end{align}
	
	We now split the term $\sta U \sta V^\tp$ as 
	the sum~$\sta U \sta V^\tp =  \sum_{i=1}^{2n} \sta u_i \sta v_i^\tp + \sum_{i=2n+1}^q \sta u_i \sta v_i^\tp$
	and compare it with the product~$\sta L^\tp \sta R =  \sum_{i=1}^{2n} \sta u_i \sta v_i^\tp$, as implied by the stationarity condition. This allows us to rewrite~\eqref{eq:kkt_stat} as
	$
	G(\sta \mu, \sta \Gamma) =  Y + \sum_{i=2n+1}^q \sta u_i \sta v_i^\tp,
	$
	where the common terms between $\sta U \sta V^\tp$ and $\sta L^\tp \sta R$ have been canceled.
	Since, from Lemma~\ref{lem:kkt_cvx}, the optimality conditions require that $Y \in \mathcal Y_{\sta X}$, we note that any right- or left-singular vectors of $Y$ must be orthogonal to the right- and left-singular vectors appearing in $\sum_{i=2n+1}^q \sta u_i \sta v_i^\tp$.
	Subsequently, as $ G(\sta \mu, \sta \Gamma)$ lies in $\mathcal Y_{\sta X}$ by~\eqref{eq:opt_G}, we must have $Y = G(\sta \mu, \sta \Gamma)$. This implies 
	$ \sum_{i=2n+1}^q \sta u_i \sta v_i^\tp = 0$, which is impossible for $q > 2n$. 
	Therefore, it is the case that $q=2n$, since the minimum rank of $\sta X$ is $2n$, by Theorem~\ref{thm:rank_lyap}. This fact implies that $\sigma_i = 0$ for $i=2n+1,\ldots,3n$, and, consequently, $\tnn{\sta X}{2n} = 0$.
\end{proof}

\ressub{
\begin{remark}
		Conditions under which sequences of points generated by updates as performed by Algorithm~\ref{alg:alg1} will produce limit points that converge to stationary points can be found in Section~3 of \cite{razaviyayn2013unified}.
\end{remark}}

\begin{figure}[!t]
	\centering
	\def\svgwidth{0.45\columnwidth}
	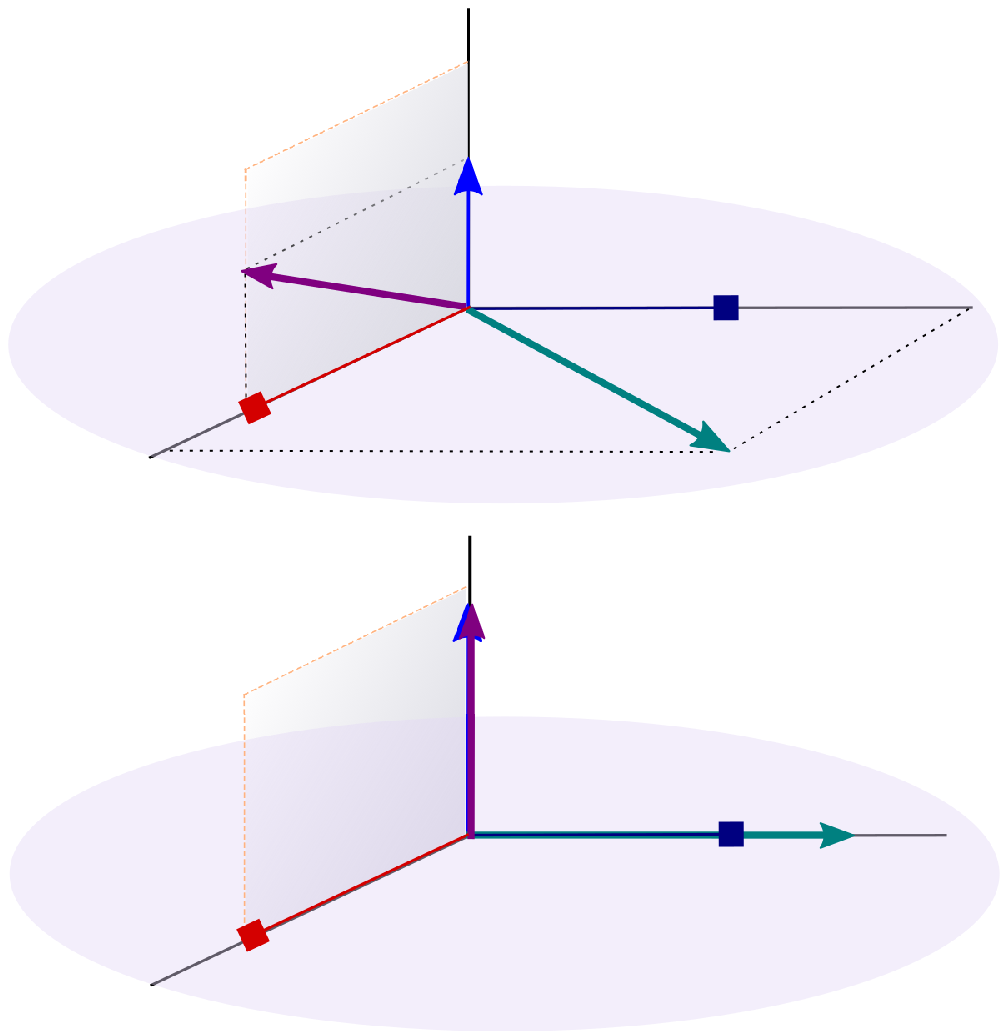	
	\caption{
		\resub{
			Geometric representation of the Lagrangian stationarity condition presented in Theorem~\ref{thm:stationary}. In (a), the case where optimality is not attained, since~$G(\sta \mu,\sta \Gamma) \neq Y$ for $Y \in \mathcal Y_{\sta X}$. Here, $\sta X$ has a component in the span of $\sta  U_{2n+1:q}\sta V_{2n+1:q}^\tp= \{ \sta u_i \sta v_i^\tp\}_{i=2n+1}^q$, therefore, $\tnn{\sta X}{2n} > 0$.
			In (b), the case where optimality is attained, with $G(\sta \mu,\sta \Gamma) = Y$ for $Y \in \mathcal Y_{\sta X}$. Here, $\sta X$ is in the span of $\sta U_{1:2n}\sta V_{1:2n}^\tp = \{ \sta u_i \sta v_i^\tp\}_{i=1}^{2n}$, which implies that $\tnn{\sta X}{2n}=0$.
		}
	}
	\label{fig:kkt}
\end{figure}

\resub{
	%
	We can interpret Theorem~\ref{thm:stationary} to get some intuition on the conditions for optimality of $\Ponestd$.
	%
	First, we recall that primal feasibility requires constraints~\eqref{eq:con_std_lin} and \eqref{eq:con_std_psd} to be satisfied at $\sta X$.
	Next, we notice from \eqref{eq:subdiff_set_nuc} that, for optimality, the subdifferential set~$\mathcal Y|_{\sta X}$ is required to be orthogonal to the row and column spaces of  primal-feasible point~$\sta X$.
	Thus, Theorem~\ref{thm:stationary} shows us how the set~$\mathcal Y|_{\sta X}$ restricts the space for dual feasibility of $\Ponestd$. Specifically, it requires the existence of Lagrange multipliers~$\sta \mu$ and $\sta \Gamma$ such that $G(\sta \mu, \sta \Gamma)$ becomes contained in the low-dimensional space defined by~$\mathcal Y|_{\sta X}$.
	Simply stated, the higher the dimension of the space required for primal feasibility -- as induced by the structural and reachability constraints -- the more restricted becomes the space~$\mathcal Y_{\sta X}$ available for dual feasibility (and hence, for  optimality).%
}
A geometrical representation of the stationary conditions for optimality obtained in Theorem~\ref{thm:stationary} is presented in Figure~\ref{fig:kkt}.

Next, in line with the result presented in Theorem~\ref{thm:stationary} for \ref{pbm:ponedc}, we derive a Lagrangian stationarity condition for a 
limit
point~$\sta X$ now associated with a particular value of $\gamma$, to satisfy $\tnn{\sta X}{2n}\! =\!0$ for \ref{pbm:ptwodc}.  

\newcommand{\Ld}{\mathcal{L}_{^{_\Delta}}}
\newcommand{\DeltaM}{\Delta^{\!\llcorner}}
\newcommand{\DeltaMsta}{{\sta\Delta}^{\!\llcorner}}
\newcommand{\Y}{\mathcal Y}
\newcommand{\F}{\mathcal F}

\newcommand{\YSetSta}{\mathcal Y|_{\sta X}}
\newcommand{\FSetSta}{\mathcal F|_{\sta X}}

\begin{theorem}[Optimality conditions
	 for \ref{pbm:ptwodc}]
	\label{thm:global_pen}
	Consider a primal feasible
	 stationary limit
	 point~$\sta X = \Z(\sta W,\sta H,\sta \Delta)$ 	
	 for some $\gamma$, with $\rank[\sta X]=q$ and singular value decomposition as described in Theorem~\ref{thm:stationary}.	
	Also, consider its dual-feasible point~$(\sta \mu, \sta \Gamma)$, for which the corresponding complementary slackness conditions hold. 
	Further, define the matrix \vspace{-1mm}
	\begin{align*}
	\DeltaMsta =
	\begin{bmatrix}
	0 & \sta \Delta \\  0 & 0
	\end{bmatrix} \in \R ^{4n \times 3n}.
	\end{align*}
	If the Lagrange multipliers $(\sta \mu, \sta \Gamma)$ associated with the primal-dual feasible limit point~$\sta X, (\sta \mu, \sta \Gamma)$ are such that 
	\begin{align}
	G(\sta \mu, \sta \Gamma) - \gamma \sgn(\DeltaMsta) = Y + F,
	\end{align}
	where $Y\in \YSetSta$ and $F$ is in the set
	$
	\FSetSta \!\coloneqq \{  F \in \R^{4n \times 3n}\!: [F]_{i,j} \!= \!0 \text{ if } [\sta X]_{i,j}\! \neq 0 , \| F\|_\infty \leq \gamma\},
	$
	%
	then, we have that $\sta X$ attains $\tnn{\sta X}{2n}\!=\!0$ for \ref{pbm:ptwodc}. Here,
	$\sgn(\Delta)$ applies the signum function over each entry of $\Delta$, evaluating to $[\sgn(\Delta)]_{i,j}=1$ if $[\Delta]_{i,j}>0$,  $[\sgn(\Delta)]_{i,j}=-1$ if $[\Delta]_{i,j}<0$, and $[\sgn(\Delta)]_{i,j}=0$, otherwise.	
\end{theorem}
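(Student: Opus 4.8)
\emph{Proof outline.} The plan is to mirror the argument of Theorem~\ref{thm:stationary}, replacing the subdifferential of the nuclear norm by that of the combined objective $\|X\|_\ast+\gamma\|\Delta\|_1$, and to argue by contradiction. First I would recast \ref{pbm:ptwosub} in a standard form analogous to \ref{pbm:ponestd}, with objective $\|X\|_\ast-\inner{C}{X}+\gamma\|\Delta\|_1$ and constraints $\A(X)=a_0$, $\B(X)\succeq B_0$ jointly encoding $X=\Z(W,H,\Delta)$, $W\in\spec$ and $\Delta\in\poly$, the $1$-norm term being regarded as a function of $X$ through the embedding $\DeltaM$. Applying subdifferential calculus to the nuclear norm and to the matrix $1$-norm, I would then derive a penalized counterpart of Lemma~\ref{lem:kkt_cvx}: a primal--dual feasible point $\sta X,(\sta\mu,\sta\Gamma)$ is optimal for $\mathcal{C}_{\gamma}(\sta L,\sta R;\theta)$ only if complementary slackness holds together with the Lagrangian stationarity condition
\begin{align}
C+G(\sta\mu,\sta\Gamma)=\sta U_q\sta V_q^\tp+Y+\gamma\,\sgn(\DeltaMsta)+F,
\label{eq:kkt_pen_stat}
\end{align}
for some $Y\in\YSetSta$ and $F\in\FSetSta$, with $C=\sta L^\tp\sta R$.

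Next, as in Theorem~\ref{thm:stationary}, I would assume for contradiction that $q\coloneqq\rank[\sta X]>2n$, so that $\tnn{\sta X}{2n}>0$. At a stationary limit point of the inner loop of Algorithm~\ref{alg:alg2} for the given $\gamma$, the \textsc{Step~A} updates satisfy $\sta L=[\sta u_1,\ldots,\sta u_{2n}]^\tp$ and $\sta R=[\sta v_1,\ldots,\sta v_{2n}]^\tp$, so $C=\sta L^\tp\sta R=\sum_{i=1}^{2n}\sta u_i\sta v_i^\tp$. Substituting this and $\sta U_q\sta V_q^\tp=\sum_{i=1}^{q}\sta u_i\sta v_i^\tp$ into \eqref{eq:kkt_pen_stat} and cancelling the common terms $\sum_{i=1}^{2n}\sta u_i\sta v_i^\tp$ gives
\begin{align}
G(\sta\mu,\sta\Gamma)-\gamma\,\sgn(\DeltaMsta)=Y+F+\sum_{i=2n+1}^{q}\sta u_i\sta v_i^\tp.
\label{eq:residual_pen}
\end{align}
By the hypothesis of the theorem, the left-hand side of \eqref{eq:residual_pen} also equals $Y_0+F_0$ for some $Y_0\in\YSetSta$ and $F_0\in\FSetSta$; equating the two forms shows that the rank-deficient component $\sum_{i=2n+1}^{q}\sta u_i\sta v_i^\tp$ of $\sta X$ would have to lie in the linear span of $\YSetSta+\FSetSta$, i.e.\ $\sum_{i=2n+1}^{q}\sta u_i\sta v_i^\tp=(Y_0-Y)+(F_0-F)$.

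Finally I would derive the contradiction. Since $\sum_{i=2n+1}^{q}\sta u_i\sta v_i^\tp$ has left and right factors in $\spann\{\sta u_1,\ldots,\sta u_q\}$ and $\spann\{\sta v_1,\ldots,\sta v_q\}$, whereas every $Y\in\YSetSta$ satisfies $\sta U_q^\tp Y=0$ and $Y\sta V_q=0$, multiplying the last identity on the left by $\sta U_q^\tp$ and on the right by $\sta V_q$ annihilates the $Y$- and $Y_0$-terms and reduces its left side to the nonzero $q\times q$ matrix $\diag(0,\ldots,0,1,\ldots,1)$ with ones in positions $2n+1,\ldots,q$. The crux --- and the step I expect to be the main obstacle --- is to show that the surviving $1$-norm contribution $\sta U_q^\tp(F_0-F)\sta V_q$ cannot account for this term; that is, that the freedom in $\FSetSta$ (entries free only off $\supp(\sta X)$ and confined to the $\Delta$-block through $\DeltaM$) cannot reconstruct an outer product of genuine singular vectors of $\sta X$. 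Establishing this requires exploiting the thinness of the $\Delta$-block relative to the $4n\times 3n$ ambient space together with the equality and semidefinite constraints $\A(X)=a_0$, $\B(X)\succeq B_0$ that pin down the remaining blocks of $\Z$. Granting it, the contradiction forces $q=2n$, which by Theorem~\ref{thm:rank_lyap} is the minimal rank of $\sta X$; hence $\sigma_i(\sta X)=0$ for $i=2n+1,\ldots,3n$, so $\tnn{\sta X}{2n}=0$ and, by Corollary~\ref{cor:suff_lyap}, $(\sta W,\sta\Delta)$ satisfies the discrete-time Lyapunov equation~\eqref{eq:lyap_delta}.
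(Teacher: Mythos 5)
Your proposal reproduces the paper's argument essentially step for step: the same standard-form recasting, the same penalized stationarity condition obtained by summing the subdifferentials of the nuclear norm and of $\gamma\|\Ld(X;A)\|_1$, and the same cancellation of $\sta L^\tp\sta R$ against the first $2n$ dyads of $\sta U\sta V^\tp$, leaving the residual identity $\sum_{i=2n+1}^{q}\sta u_i\sta v_i^\tp=(Y_0-Y)+(F_0-F)$. The step you flag as ``the main obstacle'' is exactly where the difficulty sits, and you are right that your argument does not close it: the stationarity condition only asserts the existence of \emph{some} subgradients $(Y,F)\in\YSetSta\times\FSetSta$, while the hypothesis supplies possibly \emph{different} $(Y_0,F_0)$ with $Y_0+F_0=G(\sta\mu,\sta\Gamma)-\gamma\sgn(\DeltaMsta)$. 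Projecting with $\sta U_q^\tp(\cdot)\sta V_q$ annihilates $Y_0-Y$ but leaves $\sta U_q^\tp(F_0-F)\sta V_q$, and nothing in the definition of $\FSetSta$ (support confined to the zero entries of $\sta X$, $\|F\|_\infty\leq\gamma$) prevents this from equalling the nonzero diagonal block $\sum_{i=2n+1}^{q}e_ie_i^\tp$, particularly for large $\gamma$. So, as written, the contradiction does not go through.

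You should know, however, that the paper's own proof does not close this gap either: it writes the stationarity identity and then states that ``by letting $Y$ and $F$ such that $Y+F=G(\sta\mu,\sta\Gamma)-\gamma\sgn(\DeltaMsta)$, the desired condition is achieved,'' i.e., it implicitly identifies the subgradients realizing stationarity with those furnished by the hypothesis. Under that reading the conclusion $\sum_{i=2n+1}^{q}\sta u_i\sta v_i^\tp=0$ is immediate but the theorem is nearly tautological; under the existential reading (the one you adopt, and the more natural one) the separation argument you ask for is genuinely needed and is supplied by neither you nor the paper. Your reconstruction is therefore faithful to the paper's route, and the obstacle you isolate is real; to repair the statement one would have to either read the hypothesis as referring to the very subgradients that realize stationarity, or prove that $\FSetSta-\FSetSta$ cannot reproduce $\sum_{i=2n+1}^{q}\sta u_i\sta v_i^\tp$ under the projection $\sta U_q^\tp(\cdot)\sta V_q$ -- for instance by bounding $\gamma$ or by exploiting the block structure of $\DeltaMsta$ as you suggest.
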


\begin{proof}
	We define the linear operator $\Ld: \R^{4n \times 3n} \rightarrow \R^{4n \times 3n}$, which extracts $\Delta$ from the upper-right block of $\sta X$, such that $\Ld(X;A) =  \DeltaM$.
	In particular, we have $\| \Ld(X;A) \|_1 = \| \Delta \|_1$.
	This operator allows \ref{pbm:ptwodc} to be expressed in the standard form, with objective function written in terms of the single variable $X \in \R^{4n \times 3n}$ and convex constraints imposed by the linear operators $\A(X)=a_0$ and $\B(0) \succeq B_0$, to which we associate the function $G(\mu, \Gamma) \coloneqq  \A^\ast(\mu)+ \B^\ast(\Gamma)$.
	By Lemmas~\ref{lem:subdiff_l1} and \ref{lem:subdiff_nuc} in this section of the Appendix, the Lagrangian stationarity condition for the standard form associated with \ref{pbm:ptwodc}, at the primal-dual feasible point $\sta X,(\sta \mu,\sta \Gamma)$, requires
	\begin{align*}
	\sta U \sta V^\tp + Y - \sta L^\tp \sta R  + \gamma\sgn(\DeltaMsta) + F - G(\sta \mu, \sta \Gamma) =0,
	\end{align*}
	where $Y \in \YSetSta$ and $F \in \FSetSta$.
	By splitting $\sta U \sta V^\tp$ as the sum
	$\sum_{i=1}^r \sta u_i \sta v_i^\tp + \sum_{i=r+1}^q \sta u_i \sta v_i^\tp$, we have
	\begin{align*}
	\sum_{i=r+1}^q \sta u_i \sta v_i^\tp  + Y + \gamma \sgn(\DeltaMsta) +  F - G(\mu, \Gamma) = 0,
	\end{align*}
	where the common terms between $\sta U \sta V^\tp$ and $\sta L^\tp \sta R$ have been canceled.
	Therefore, $\tnn{\sta X}{2n}=0$ will be attained if $\sum_{i=r+1}^q \sta u_i \sta v_i^\tp = 0$. By letting $Y$ and $F$ such that
	\begin{align*}
	Y + F = G(\sta \mu, \sta \Gamma) - \gamma \sgn(\DeltaMsta),
	\end{align*}
	the desired condition is achieved.
\end{proof}

\subsection{Additional Lemmas}

\begin{lemmaa}[Subdifferential of matrix $1$-norm]
	\label{lem:subdiff_l1}
	Let $X \in \R^{m \times n}$, and denote by $\sgn(X)$ the matrix containing the result of the signum function applied at each entry of $X$. Then, the subdifferential of $\| X\|_1$ is given by
	$
	\partial \| X \|_1 = \{ \sgn(X) + F : F \in \R^{m \times n}, 
	[F]_{i,j} = 0 \text{ if } [X]_{i,j} \neq 0 , \| F\|_\infty \leq 1\},
	$
	where $i=1,\ldots,m$, $j=1,\ldots,n$, and $\| F\|_\infty= \max_{i,j} [F]_{i,j}$. 
\end{lemmaa}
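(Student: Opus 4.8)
The plan is to exploit the separability of the matrix $1$-norm across the entries of $X$. Writing $\|X\|_1 = \|\vecop(X)\|_1 = \sum_{i=1}^m\sum_{j=1}^n |[X]_{i,j}|$, the function $\|\cdot\|_1$ is a finite sum in which each summand depends on a single coordinate. First I would invoke the standard convex-analysis rule that the subdifferential of a separable sum is the coordinatewise direct sum of the individual subdifferentials: a matrix $G$ belongs to $\partial\|X\|_1$ if and only if $[G]_{i,j}$ lies in the subdifferential of the scalar absolute value $|\cdot|$ evaluated at $[X]_{i,j}$, for every pair $(i,j)$. (If an inline justification of this rule is desired, it follows immediately by testing the subgradient inequality $\|Y\|_1 \ge \|X\|_1 + \inner{G}{Y-X}$ one entry at a time; otherwise it can be cited from a standard reference.)

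Next I would recall the elementary scalar fact that $\partial|t| = \{\sgn(t)\}$ when $t\neq 0$ and $\partial|0| = [-1,1]$. Substituting this characterization entrywise shows that $G \in \partial\|X\|_1$ precisely when $[G]_{i,j} = \sgn([X]_{i,j})$ at each entry with $[X]_{i,j}\neq 0$, and $[G]_{i,j}\in[-1,1]$ at each entry with $[X]_{i,j}=0$.

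Finally I would re-express this entrywise description in the additive form stated in the lemma. Set $F := G - \sgn(X)$. Since $\sgn([X]_{i,j})=0$ exactly where $[X]_{i,j}=0$, we obtain $[F]_{i,j}=0$ at every entry where $[X]_{i,j}\neq 0$, and $[F]_{i,j}=[G]_{i,j}\in[-1,1]$ elsewhere, i.e. $\|F\|_\infty\le 1$ (read entrywise); conversely, given any such $F$, the matrix $G = \sgn(X)+F$ satisfies all the entrywise conditions above and hence lies in $\partial\|X\|_1$. This establishes set equality in both directions, which is the claim.

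The argument is essentially routine, so I do not expect a genuine obstacle; the only point requiring care is the bookkeeping between the two descriptions — verifying that the entries of $F$ allowed to be nonzero are exactly complementary to the support of $X$, and that the bound $\|F\|_\infty\le 1$ correctly encodes the interval $[-1,1]$ appearing at the zero entries. If one prefers to avoid quoting the separable-subdifferential rule, the cleanest alternative is to prove both inclusions directly from the subgradient inequality $\|Y\|_1 \ge \|X\|_1 + \inner{G}{Y-X}$, which, by choosing $Y$ to perturb a single coordinate of $X$, reduces verbatim to the scalar case; I would default to the separable-sum route for brevity.
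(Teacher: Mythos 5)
Your argument is correct and complete. The paper itself does not prove this lemma --- it simply cites \cite[p.244]{calafiore2014optimization} --- so your self-contained derivation (separability of $\|\cdot\|_1$ across entries, the scalar fact $\partial|t|=\{\sgn(t)\}$ for $t\neq 0$ and $\partial|0|=[-1,1]$, then the change of variables $F=G-\sgn(X)$) supplies exactly the standard argument the citation points to, and the bookkeeping in both directions is handled correctly. The only minor caveat is notational: the paper writes $\|F\|_\infty=\max_{i,j}[F]_{i,j}$ without absolute values, but your entrywise reading $[F]_{i,j}\in[-1,1]$ is clearly the intended meaning and is what makes the set equality hold.
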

\begin{proof}
	See \cite[p.244]{calafiore2014optimization}.
\end{proof}

\begin{lemmaa}[Subdifferential of matrix nuclear norm]
	\label{lem:subdiff_nuc}
	Let $X \in \R^{m \times n}$ with rank $q$, and singular value decomposition 
	\resub{
		$X = U_q S_q V_q^\tp$, 
		with $S_q=\diag(\sigma_1, \ldots, \sigma_q)$,
		$U_q \in \R^{m \times q}$, and $V_q \in \R^{n \times q}$.
	}
	Then, the subdifferential of $\| X\|_\ast$ is given by
	$ \partial \| X \|_\ast = \{ U_qV_q^\tp + Y : Y \in \R^{m \times n}, 
	YV_q = 0, U_q^\tp Y = 0, \| Y\|\leq 1\}$,	
	where $\| Y \|$ denotes the operator norm of $Y$.
\end{lemmaa}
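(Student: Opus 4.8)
The plan is to derive this characterization from the variational description of the nuclear norm as the support function of the operator-norm unit ball, $\|X\|_\ast = \sup_{\|Z\|\le 1}\inner{Z}{X}$, together with the equality case of von Neumann's trace inequality (Lemma~\ref{lem:von_neumann}). For a general norm $\nu$ on $\R^{m\times n}$ with dual norm $\nu^\circ$ and $X\neq 0$, one has the standard identity $\partial\nu(X) = \{Z : \nu^\circ(Z)\le 1,\ \inner{Z}{X}=\nu(X)\}$ (for $X=0$ this is the dual unit ball, which matches the claimed set since then $U_q,V_q$ are empty). Since von Neumann's inequality gives $\inner{Z}{X}\le \sum_i\sigma_i(Z)\sigma_i(X)\le \|Z\|\,\|X\|_\ast$ with equality attained, the operator norm is dual to the nuclear norm, so $\partial\|X\|_\ast = \{Z:\|Z\|\le 1,\ \inner{Z}{X}=\|X\|_\ast\}$. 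It then remains to show this set coincides with $\mathcal S\coloneqq \{U_qV_q^\tp + Y : Y\in\R^{m\times n},\ YV_q=0,\ U_q^\tp Y=0,\ \|Y\|\le 1\}$.

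For $\mathcal S\subseteq\partial\|X\|_\ast$, take $Z=U_qV_q^\tp+Y$ with $Y$ as above. A direct computation with $X=U_qS_qV_q^\tp$ gives $\inner{Z}{X}=\tr(S_q)+\tr(S_qV_q^\tp Y^\tp U_q)=\|X\|_\ast$, the second trace vanishing because $U_q^\tp Y=0$. For $\|Z\|\le 1$, I would complete $U_q,V_q$ to orthogonal matrices $U=[U_q\,|\,U_\perp]$, $V=[V_q\,|\,V_\perp]$ and write a unit vector as $w=V_qa+V_\perp b$ with $\|a\|^2+\|b\|^2=1$; then $YV_q=0$ yields $Zw=U_qa+YV_\perp b$, and since $U_q^\tp Y=0$ forces $\rang{Y}$ orthogonal to $\rang{U_q}$, the two summands are orthogonal, so $\|Zw\|^2=\|a\|^2+\|YV_\perp b\|^2\le\|a\|^2+\|b\|^2=1$.

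For the reverse inclusion $\partial\|X\|_\ast\subseteq\mathcal S$, which I expect to be the main obstacle, suppose $\|Z\|\le1$ and $\inner{Z}{X}=\sum_{i=1}^q\sigma_i(X)$. Writing this as $\sum_{i=1}^q\sigma_i(X)\,[U_q^\tp ZV_q]_{i,i}$ and using $\|U_q^\tp ZV_q\|\le1$ together with $\sigma_i(X)>0$, one gets $[U_q^\tp ZV_q]_{i,i}=1$ for all $i$. In the bases above, the compression $U^\tp ZV=\left[\begin{smallmatrix}A&B\\C&D\end{smallmatrix}\right]$ has norm $\le1$ and $A=U_q^\tp ZV_q$ has unit diagonal; the elementary ``unit diagonal forces the identity'' fact---if $\|M\|\le1$ and $M_{i,i}=1$ then $Me_i=e_i$, seen by comparing $\|Me_i\|\le1$ with its $i$-th coordinate---yields $A=I_q$, and the same norm bound applied to the first $q$ columns $[A^\tp\,|\,C^\tp]^\tp$ and the first $q$ rows $[A\,|\,B]$ of $U^\tp ZV$ forces $C=0$ and $B=0$. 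Hence $Z=U_qV_q^\tp+U_\perp DV_\perp^\tp$; setting $Y\coloneqq U_\perp DV_\perp^\tp$ gives $\|Y\|=\|D\|\le1$, $YV_q=0$, $U_q^\tp Y=0$, i.e.\ $Z\in\mathcal S$. The delicate step is precisely this tightness analysis of von Neumann's inequality; invoking Lemma~\ref{lem:von_neumann} for its equality case streamlines identifying the diagonal entries, and everything else is routine linear algebra.
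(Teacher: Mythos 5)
Your proof is correct, but it is worth noting that the paper does not actually prove this lemma at all: its ``proof'' is a pointer to Watson's characterization of norm subdifferentials and to Recht--Fazel--Parrilo. What you have written is therefore a genuinely self-contained derivation where the paper offers only a citation, and the route you take is the standard one underlying those references: identify $\partial\|X\|_\ast$ with $\{Z:\|Z\|\le 1,\ \inner{Z}{X}=\|X\|_\ast\}$ via norm duality, then resolve the equality case. All three pieces check out. The forward inclusion computation $\inner{U_qV_q^\tp+Y}{X}=\tr(S_q)$ and the orthogonal-splitting bound $\|Zw\|^2=\|a\|^2+\|YV_\perp b\|^2\le 1$ are fine. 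The reverse inclusion is indeed the only delicate part, and your handling is sound: $\|U_q^\tp ZV_q\|\le\|Z\|\le 1$ forces the diagonal of the compression to be bounded by one, positivity of the $\sigma_i$ then pins each diagonal entry at one, and the ``unit diagonal plus operator-norm bound forces a column (and row) to be a standard basis vector'' argument kills the off-diagonal blocks $B$ and $C$, leaving $Z=U_qV_q^\tp+U_\perp DV_\perp^\tp$ with $\|D\|\le 1$, which is exactly membership in the claimed set. The degenerate case $X=0$ is also correctly dispatched. The only cosmetic mismatch is your appeal to Lemma~4 of the paper (Von Neumann's trace inequality in the $\tr\{LXR^\tp\}$ form with semi-orthogonal $L,R$) for the duality step, whereas what you actually use is the general form $\inner{Z}{X}\le\sum_i\sigma_i(Z)\sigma_i(X)$; this is standard and harmless, but strictly speaking it is not the statement of that lemma. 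What your approach buys is a proof from first principles using only elementary linear algebra; what the paper's citation buys is brevity and an appeal to a general theory of subdifferentials of orthogonally invariant norms.
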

\begin{proof}
	See \cite{watson1992characterization} and \cite[p.481]{recht2010guaranteed}.
\end{proof}

\end{document}